\title{\bf Asymptotic behavior for dissipative Korteweg-de Vrie equations}
\author{St\'ephane Vento, \\Universit\'e Paris-Est, \\Laboratoire d'Analyse
et de Math\'ematiques Appliqu\'ees,\\ 5 bd. Descartes, Cit\'e
Descartes, Champs-Sur-Marne,\\ 77454 Marne-La-Vall\'ee Cedex 2,
France}
\date{E-mail:\, stephane.vento@univ-paris-est.fr}
\numberwithin{equation}{section}
\newtheorem{theorem}{Theorem}[section]
\newtheorem{lemma}{Lemma}[section]
\newtheorem{corollary}{Corollary}[section]
\newtheorem{remark}{Remark}[section]
\def\R{\mathbb{R}}
\def\N{\mathbb{N}}
\def\S{\mathcal{S}}
\def\M{\mathcal{M}}
\def\eps{\varepsilon}
\def\sgn{\mathop{\rm sgn}\nolimits}
\def\mes{\mathop{\rm mes}\nolimits}
\def\intr{\int_{-\infty}^\infty}
\begin{document}
\maketitle

\noindent {\bf Abstract.}\, We study the large time behavior of
solutions to the dissipative Korteweg-de Vrie equations
$u_t+u_{xxx}+|D|^{\alpha}u+uu_x=0$ with $0<\alpha<2$. We find $v$
such that $u-v$ decays like $t^{-r(\alpha)}$ as
$t\rightarrow\infty$ in various Sobolev
norm.\\

\noindent {\bf Keywords :} KdV-like equations, dissipative dispersive equations, large time behavior\\
{\bf AMS Classification :} 35Q53, 35B40

\section{Introduction}

In this paper we study the asymptotic behavior of solutions to the
following dissipative KdV equations
\begin{equation}\tag{dKdV}\label{eq}\left\{\begin{array}{ll}u_t+u_{xxx}+|D|^\alpha u+uu_x=0, & t\in\R_+, x\in\R,\\
u(0,x)=u_0(x), & x\in\R,\end{array}\right.\end{equation}
with $0<\alpha<2$ and where $|D|^\alpha$ is the L\'{e}vy operator defined through its Fourier transform by
$\widehat{|D|^\alpha\varphi}(\xi)=|\xi|^\alpha\widehat{\varphi}(\xi)$. Here $u=u(t,x)$ is a real-valued function.

The (\ref{eq}) equations are dissipative versions of the
well-known KdV equation
\begin{equation}\label{kdv}u_t+u_{xxx}+uu_x=0\end{equation} which
have been extensively studied. Equation (\ref{kdv}) is completely
integrable and there exists an infinite sequence of conserved
quantities. For sufficiently smooth initial data, we know that
global in time solutions exist and can be asymptotically written
as a sum of traveling wave solutions, called solitons, see
\cite{MR874343}, \cite{MR0404890}.

\vskip 0.5cm

Concerning the pure dissipative equation
\begin{equation}\label{fbe}u_t+|D|^\alpha u+uu_x=0,\end{equation} it has been proposed to model a variety
of physical phenomena, such that the growth of molecular
interfaces (cf. \cite{PhysRevLett.56.889}). Also, in
\cite{MR2158256}, Jourdain, M{\'e}l{\'e}ard and Woyczynski pointed
out the main interest of equation (\ref{fbe})  in probability
theory. Biler, Funaki and Woyczynski proved in \cite{MR1637513}
several local and global well-posedness results, in particular in
the general setting $0<\alpha\leq 2$, they obtained weak solutions
of (\ref{fbe}). Using the Fourier splitting method first
introduced by Schonbek in \cite{MR571048}, they showed that
regular solutions satisfy the estimate
\begin{equation}\label{estl2}\|u(t)\|_{L^2}\leq
c(1+t)^{-1/2\alpha}\end{equation} for all $t>0$. This result was
improved by Biler, Karch and Woyczynski \cite{MR1708995} in the
case of a diffusion operator of the form
$-\partial_x^2+|D|^\alpha$. See also \cite{karch-2007} for
asymptotic results concerning (\ref{fbe}) with $1<\alpha<2$.

\vskip 0.5cm

Let us turn back to the (\ref{eq}) equation. The Cauchy problem
(\ref{eq}) with $0\leq\alpha\leq 2$ has been shown to be globally
well-posed in the Sobolev spaces $H^s(\R)$ for all $s>-3/4$ and
furthermore, the solution $u(t)$ belongs to $H^\infty(\R)$ for any
$t>0$ (cf. \cite{MR1889080}). When $\alpha=1/2$, (\ref{eq}) models
the evolution of the free surface for shallow water waves damped
by viscosity, see \cite{ott:1432}. When $\alpha=2$, (\ref{eq}) is
the so-called KdV-Burgers equation which models the propagation of
weakly nonlinear dispersive long waves in some contexts when
dissipative effects occur (see \cite{ott:1432}).  In the case
$\alpha=0$, (\ref{eq}) reads
\begin{equation}\label{alpha0}u_t+u_{xxx}+u+uu_x=0\end{equation}
and it is easy to get the decay rate for the $L^2$-norm of the
solution. Indeed, multiplying (\ref{alpha0}) by $u$ and
integrating over $\R$ give for regular solutions the equality
$$\frac 12\partial_t\intr u^2(t,x)dx+\intr u^2(t,x)dx=0,$$ and it follows
immediately that $$\|u(t)\|_{L^2}= O(e^{-t}) \textrm{ as }
t\rightarrow \infty.$$ Now consider the KdV-Burgers equation
((\ref{eq}) with $\alpha=2$). In a sharp contrast with what occurs
for (\ref{alpha0}), Amick, Bona and Schonbek \cite{MR1012198}
proved that if $u_0\in L^1(\R)\cap H^2(\R)$, then the
corresponding solution satisfies
\begin{equation}\label{estkdvb}\|u(t)\|_{L^2}\leq c(1+t)^{-1/4}\end{equation} and furthermore, this estimate is optimal
 for a generic class of functions.
The proof of this result is based on a subtle use of the Hopf-Cole
transformation. Later, Karch \cite{MR1643236} improved this result
by showing that the asymptotic profile of the solution with a mass
$M$ is given by the fundamental solution $U_M$ of the viscous
Burgers equation (eq. (\ref{fbe}) with $\alpha=2$)
$$u_t-u_{xx}+uu_x=0$$ with the same mass. More precisely, we have
$$t^{(1-1/p)/2}\|u(t)-U_M(t)\|_{L^p}\rightarrow 0\quad\textrm{as}\ t\rightarrow \infty$$ for each
$p\in[1,\infty]$. In other words, we can say that for large times,
the dispersion is negligible compared to dissipation and
nonlinearity effects. His method of proof is based on a scaling
argument. This kind of behavior was also heuristically observed by
Dix in \cite{MR1187625}. He called this situation the "balanced
case" because both dissipation and nonlinearity contributions
appear in the long time behavior of the solution, this is formally
expressed by the relation $\alpha=2$.

In the present paper we study the so-called "asymptotically weak
nonlinearity case" $\alpha<2$. For a large class of equations,
solution of the nonlinear problem asymptotically looks like
solution of the corresponding linear problem (with same initial
data). One of the goals of this article is to show that similar
behaviors occur for (\ref{eq}) with $0<\alpha <2$.\\

 Following the
works of Karch \cite{MR1727212}, we shall mainly work on the
integral formulation of (\ref{eq}) :
\begin{equation}\label{duhamel}u(t)=S_\alpha(t)\ast u_0-\frac 12\int_0^tS_\alpha(t-s)\ast\partial_x u^2(s)ds\end{equation}
valid for any sufficiently regular solution, and where $S_\alpha(t)$ is defined by $$S_\alpha(t,x)=
\frac{1}{2\pi}\intr e^{ix\xi}e^{(i\xi^3-|\xi|^\alpha)t}d\xi,\quad t>0.$$
First, using the properties of the generalized heat kernel, we
give a complete asymptotic expansion of the free solution
$S_\alpha(t)\ast u_0$. After deriving the decay rates estimates of
the solution in various Sobolev norms $\|\cdot\|$, we show that
$\|u(t)-S_\alpha(t)\ast u_0\|$ is bounded by $ct^{-r(\alpha)}$,
$r(\alpha)>0$. Next, we improve this result by finding terms
$w=w(t,x)$ such that $\|u(t)-S_\alpha(t)\ast u_0-w(t)\|$ decays to
zero faster than $t^{-r(\alpha)}$. \vskip 0.5cm

\noindent{\bf Notation.} The notation to be used are standard. The letter $c$ denotes a constant which may change at each occurrence.
For $p\in[1,\infty]$ we define the Lebesgue space $L^p(\R)$ by its norm $\|f\|_{L^p}=\Big(\intr |f(x)|^pdx\Big)^{1/p}$ with the usual modification
for $p=\infty$. If $f=f(t,x)$ is a space-time function, the $L^p$-norm of $f$ will be taken in the $x$-variable. For $j\geq 0$ and $p\in[1,\infty]$,
the Sobolev spaces $H^{p,j}(\R)$ and $\dot{H}^{p,j}(\R)$ are respectively endowed with the norms $\|f\|_{H^{p,j}}=\|f\|_{L^p}+\|\partial_x^jf\|_{L^p}$
and $\|f\|_{\dot{H}^{p,j}}=\|\partial_x^jf\|_{L^p}$. When $p=2$, we simplify by the notation $H^j(\R)$ and $\dot{H}^j(\R)$. If $f\in\S'(\R)$,
we define its Fourier transform by setting $\hat{f}(\xi)=\mathcal{F}f(\xi)=\intr e^{-ix\xi}f(x)dx$.\\
We introduce $G_\alpha$, the fundamental solution of the equation $u_t+|D|^\alpha u=0$, i.e.
$$G_\alpha(t,x)=\frac{1}{2\pi}\intr e^{ix\xi}e^{-t|\xi|^\alpha}d\xi,\quad t>0.$$
It is clear that $G_\alpha$ has the self-similarity property
\begin{equation}\label{autosi}G_\alpha(t,x) =
t^{-1/\alpha}G_\alpha(1,xt^{-1/\alpha}),\quad x\in\R,
t>0.\end{equation} On the other hand, we know that $G_\alpha(t)\in
H^{p,j}(\R)$ for any $p\in[1,\infty]$ and $j\geq 0$, see for
instance \cite{miao-2006}.\\ Finally, for $f\in L^1(x^jdx)$,
$j\in\N$, we set $\M_j(f)=\intr f(x)x^jdx$.

\section{Main results}
\label{sec-main} As we are going to show, the solution of
(\ref{eq}) can be approximated by the solution of the
corresponding linear equation. We first give a complete asymptotic
expansion of $S_\alpha(t)\ast u_0$, which will be used in the
proof of the main theorem.
\begin{theorem}\label{complin} Let $p\in[1,\infty]$ and $j,N\in\N$. Then for all $t\geq 1$ and $u_0\in L^1((1+|x|)^{N+1}dx)$,
\begin{multline}\Big\|S_\alpha(t)\ast u_0-\sum_{n=0}^N\frac{(-1)^n}{n!}\M_n(u_0)\partial_x^nG_\alpha(t)-\sum_{k=1}^N\frac{t^k}{k!}
\sum_{\ell=0}^{N-1}\frac{(-1)^\ell}{\ell !}\M_\ell(u_0)\partial_x^\ell(-\partial_x)^{3k}G_\alpha(t)\Big\|_{\dot{H}^{p,j}}\\
\leq ct^{-(1-1/p)/\alpha-j/\alpha-(N+1)/\alpha}\end{multline}
\end{theorem}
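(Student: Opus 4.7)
My plan is to work in Fourier variables, where $\widehat{S_\alpha(t)\ast u_0}(\xi) = e^{-t|\xi|^\alpha}e^{it\xi^3}\hat u_0(\xi)$ admits two independent Taylor expansions. Writing $\hat u_0 = P_N + R_N$ with $P_N(\xi) = \sum_{n=0}^N \frac{(-i\xi)^n}{n!}\M_n(u_0)$, so that $|R_N(\xi)|\leq c|\xi|^{N+1}\|u_0\|_{L^1(|x|^{N+1}dx)}$ by Taylor's inequality applied pointwise inside the Fourier integral, and $e^{it\xi^3} = Q_N + T_N$ with $Q_N(\xi,t) = \sum_{k=0}^N (it\xi^3)^k/k!$ and $|T_N|\leq c(t|\xi|^3)^{N+1}$, a direct Fourier check (using $\xi^{3k} = i^k (i\xi)^{3k}$, so that inversion produces the operator $(-\partial_x)^{3k}$) identifies the approximation in the statement with the inverse transform of $[P_N + (Q_N-1)P_{N-1}]\,e^{-t|\xi|^\alpha}$. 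The error in Fourier then splits algebraically as
\begin{equation*}
e^{-t|\xi|^\alpha}\bigl[R_N + (Q_N-1)(\hat u_0 - P_{N-1}) + \hat u_0\, T_N\bigr] = \hat I_1 + \hat I_2 + \hat I_3,
\end{equation*}
which I treat term by term.

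The inverse transform $I_1$ is exactly the Taylor remainder of the convolution $u_0\ast G_\alpha(t)$: by the integral form of Taylor's theorem, $I_1(t,x) = \intr u_0(y)\frac{(-y)^{N+1}}{(N+1)!}\partial_x^{N+1}G_\alpha(t,x-\theta y)\,dy$, so Minkowski together with the scaling $\|\partial_x^m G_\alpha(t)\|_{L^p}\leq c\,t^{-(1-1/p)/\alpha - m/\alpha}$ yields $\|\partial_x^j I_1\|_{L^p}\leq c\,\|u_0\|_{L^1(|x|^{N+1}dx)}\,t^{-(1-1/p)/\alpha - (j+N+1)/\alpha}$. The same Taylor argument at order $N-1$ handles $I_2$: the $k$-th summand of $(Q_N-1)$ contributes $\partial_x^{3k}$-derivatives with prefactor $t^k/k!$, giving the total loss $\sum_{k=1}^N t^{k(1-3/\alpha)}/k!$. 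Since $\alpha<2<3$ implies $1-3/\alpha<0$, for $t\geq 1$ this is dominated by the $k=1$ contribution $t^{(\alpha-3)/\alpha}$, and because $3-\alpha\geq 1$ when $\alpha\leq 2$, the resulting bound $c\,t^{-(1-1/p)/\alpha - (j+N+3-\alpha)/\alpha}$ is in fact stronger than the one claimed.

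The main obstacle is the third piece $I_3 = u_0 \ast R_N^S$, where $R_N^S(t) = S_\alpha(t) - \sum_{k=0}^N \frac{t^k}{k!}(-\partial_x)^{3k}G_\alpha(t)$ is the Taylor remainder of the full kernel in its dispersive direction; Young's inequality reduces matters to $\|\partial_x^j R_N^S\|_{L^p}\leq c\,t^{-(1-1/p)/\alpha - j/\alpha - (N+1)/\alpha}$. My plan is a self-similar rescaling: setting $y = xt^{-1/\alpha}$ and $\tau = t^{1-3/\alpha}$ (so $\tau\leq 1$ for $t\geq 1$, since $\alpha<2<3$) in the defining Fourier integral produces $R_N^S(t,x) = t^{-1/\alpha}F_\tau(y)$ with $F_\tau(y) = (2\pi)^{-1}\intr e^{iy\eta}e^{-|\eta|^\alpha}T_N^\tau(\eta)\,d\eta$ and $|T_N^\tau(\eta)|\leq c(\tau|\eta|^3)^{N+1}$. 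The pointwise bound combined with Hausdorff--Young and Plancherel immediately gives $\|F_\tau\|_{L^\infty},\,\|F_\tau\|_{L^2}\leq c\,\tau^{N+1}$; for the remaining $p$ I would derive moment estimates $\|y^m F_\tau\|_{L^\infty}\leq c_m\,\tau^{N+1}$ by integrating by parts $m$ times in $\eta$ and exploiting the functional identity $\partial_\eta T_N^\tau = 3i\tau\eta^2\,T_N^\tau + 3(i\tau)^{N+1}\eta^{3N+2}/N!$ (which follows from differentiating the Taylor decomposition and cancelling the lower-order polynomials). Interpolation then yields $\|F_\tau\|_{L^p}\leq c\,\tau^{N+1}$ uniformly in $\tau\in(0,1]$; undoing the scaling and applying $\partial_x^j$ produces $\|\partial_x^j R_N^S\|_{L^p}\leq c\,t^{-(1-1/p)/\alpha - j/\alpha - (N+1)(3-\alpha)/\alpha}$, which meets the required bound since $3-\alpha\geq 1$ for $\alpha\leq 2$.
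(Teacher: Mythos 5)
Your proposal is correct and follows essentially the same route as the paper: your three Fourier-side pieces $I_1$, $I_2$, $I_3$ are exactly the paper's terms $II$, $III$, $I$ obtained by the triangle inequality (moment expansion of $G_\alpha(t)\ast u_0$, the cross terms treated at order $N-1$ with the sum over $k$ dominated by $k=1$ since $1-3/\alpha<0$, and the dispersive Taylor remainder of the kernel). Your self-similar rescaling of the kernel remainder with $L^\infty$, $L^2$ and weighted/moment bounds plus interpolation is just a repackaging of the paper's Lemma giving (\ref{estSG}), which does the same computation via Plancherel, the inequality $\|f\|_{L^1}^2\leq c\|f\|_{L^2}\|\partial_\xi\widehat f\|_{L^2}$ and interpolation.
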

\begin{remark} When $N=0$, the sum $\sum_{k=1}^N$ in (\ref{complin}) has to be understood as 0, and thus (\ref{complin}) reads
\begin{equation}\label{estSGlin}\|S_\alpha(t)\ast u_0-\M_0(u_0)G_\alpha(t)\|_{\dot{H}^{p,j}}\leq ct^{-(1-1/p)/\alpha-j/\alpha-1/\alpha}.
\end{equation}
If $N=1$, we have the following asymptotic expansion for $S_\alpha(t)\ast u_0$,
\begin{multline*}\|S_\alpha(t)\ast u_0-\M_0(u_0)G_\alpha(t)+\M_1(u_0)\partial_xG_\alpha(t)+t\M_0(u_0)\partial_x^3G_\alpha(t)\|_{\dot{H}^{p,j}}\\
\leq ct^{-(1-1/p)/\alpha-j/\alpha-2/\alpha}.\end{multline*}
\end{remark}

\begin{remark} The term $\sum_{n=0}^N\frac{(-1)^n}{n!}\M_n(u_0)\partial_x^nG_\alpha(t)$ in (\ref{complin}) corresponds to the asymptotic expansion
of $G_\alpha(t)\ast u_0$, solution to
 the generalized heat equation $u_t+|D|^\alpha u=0$. The other terms are due to the dispersive effects and appear only for $N\geq 1$.
\end{remark}

\vskip 0.5cm

Now we consider the nonlinear equation (\ref{eq}) with
$0<\alpha<2$. Throughout this paper, we make the following
assumptions :
\begin{equation}\label{as1}u_0\in L^1(\R)\cap L^2(\R),\end{equation}
\begin{equation}\label{as2}u\in C(]0,\infty[;H^\infty(\R)),\end{equation}
\begin{equation}\label{as3}\textrm{if}\ u_0\in H^{j}(\R),\ \textrm{then}\ \sup_{t\geq 0}\|\partial_x^ju(t)\|_{L^2}<\infty.\end{equation}
For $u_0\in L^2(\R)$, existence of global solutions satisfying
(\ref{as2}) was proved for example in \cite{MR1889080}. Moreover,
if $u_0\in H^j(\R)$, it was shown that the solution is continuous
from $[0,\infty[$ to $H^j(\R)$. In Section \ref{sec-bound}, we
will show that assumption (\ref{as3}) is verified for such
solutions when $u_0\in L^1(\R)\cap L^\infty(\R)$, at least in the
case $\alpha>1$.

\begin{theorem}\label{th-decayrate} Let $p\in[2,\infty]$ and $j\in \N$. Assume that $u_0\in H^{j+1}(\R)\cap L^1(\R)$ and (\ref{as2})-(\ref{as3})
hold true. Then we have \begin{equation}\label{decayrate}\|u(t)\|_{\dot{H}^{p,j}}\leq c(1+t)^{-(1-1/p)/\alpha-j/\alpha},\quad t>0.\end{equation}
When $j=0$, (\ref{decayrate}) is valid for all $p\in[1,\infty]$.
\end{theorem}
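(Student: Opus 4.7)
The plan is to prove \eqref{decayrate} by induction on $j$, treating the base case $j=0$ first. For the base case I would start by establishing the $L^2$ decay and then upgrade to arbitrary $p\in[1,\infty]$ via the Duhamel formula \eqref{duhamel} and Theorem \ref{complin}; the inductive step uses the same machinery applied to $\partial_x^j u$. For the base case at $p=2$, multiply \eqref{eq} by $u$ and integrate: both $u_{xxx}$ and $u^2u_x$ are exact derivatives, yielding the energy identity
\begin{equation*}
\frac{1}{2}\frac{d}{dt}\|u(t)\|_{L^2}^2+\||D|^{\alpha/2}u(t)\|_{L^2}^2=0.
\end{equation*}
I then apply Schonbek's Fourier splitting: write $\R=B(t)\cup B(t)^c$ with $B(t)=\{|\xi|^\alpha\leq k/(1+t)\}$ and observe that $\||D|^{\alpha/2}u\|_{L^2}^2\geq \frac{k}{1+t}\bigl(\|u\|_{L^2}^2-\int_{B(t)}|\hat u|^2 d\xi\bigr)$. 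The low-frequency integral is controlled by the pointwise bound $|\hat u(t,\xi)|\leq |\hat u_0(\xi)|+c|\xi|\int_0^t\|u(s)\|_{L^2}^2\,ds$ obtained from Duhamel's identity in Fourier variables, and a Gronwall argument (iterated to bootstrap the decay rate) then gives $\|u(t)\|_{L^2}^2\leq c(1+t)^{-1/\alpha}$.

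For general $p\in[1,\infty]$ at $j=0$, I would apply $\|\cdot\|_{L^p}$ to \eqref{duhamel}. Theorem \ref{complin} with $N=0$ together with the self-similarity \eqref{autosi} of $G_\alpha$ gives $\|S_\alpha(t)\ast u_0\|_{L^p}\leq c(1+t)^{-(1-1/p)/\alpha}$. For the nonlinear contribution I split the time integral at $t/2$. On $[0,t/2]$ the $L^1\to L^p$ estimate yields
\begin{equation*}
\|S_\alpha(t-s)\ast\partial_x u^2(s)\|_{L^p}\leq c(t-s)^{-(1-1/p)/\alpha-1/\alpha}\|u(s)\|_{L^2}^2,
\end{equation*}
and, using the $L^2$ decay from the previous step together with the restriction $\alpha<2$, this integrates to the desired rate $O((1+t)^{-(1-1/p)/\alpha})$. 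On $[t/2,t]$ I would use an $L^{q}\to L^p$ bound with $q$ chosen so that the resulting kernel is integrable near $s=t$, bounding $\|u^2\|_{L^q}$ by interpolating between the known $L^2$ decay and the unknown $L^p$ quantity, then closing the estimate via a bootstrap on $M_p(t):=\sup_{s\leq t}(1+s)^{(1-1/p)/\alpha}\|u(s)\|_{L^p}$.

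For the inductive step, fix $j\geq 1$ and assume \eqref{decayrate} holds for every smaller order. Apply $\partial_x^j$ to Duhamel and expand the nonlinearity by Leibniz:
\begin{equation*}
\partial_x^j u(t)=\partial_x^j S_\alpha(t)\ast u_0-\frac{1}{2}\sum_{k=0}^{j+1}\binom{j+1}{k}\int_0^t S_\alpha(t-s)\ast\bigl(\partial_x^k u\cdot\partial_x^{j+1-k}u\bigr)(s)\,ds.
\end{equation*}
Every cross term with $1\leq k\leq j$ is directly controlled by the induction hypothesis. The extreme terms involving $\partial_x^{j+1}u$ are handled by an energy estimate on $\partial_x^j u$ combined with Fourier splitting, exactly parallel to the base case but with commutator remainders of lower order absorbed by the induction; assumption \eqref{as3} ensures that the relevant norms are finite at $t=0$ so the bootstrap has a starting point. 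The passage from $p=2$ to arbitrary $p$ for $\partial_x^j u$ is then identical to the $j=0$ analysis.

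The main obstacle is the closure of the nonlinear bootstrap on $[t/2,t]$: the linear kernel $\|S_\alpha(t-s)\ast\partial_x\cdot\|_{L^{q}\to L^p}$ carries the factor $(t-s)^{-(1/q-1/p)/\alpha-1/\alpha}$ which is $L^1_s$ near $s=t$ only in a narrow window of exponents, and this window tightens as $\alpha\downarrow 0$. Navigating it requires choosing the intermediate exponent for $\|u^2\|_{L^q}$ so that it matches, by interpolation between the $L^2$ decay of Step 1 and the bootstrap quantity $M_p(t)$, a factor with strictly positive decay rate; in the inductive step one must further arrange the Leibniz sum so that the highest-derivative factor is always paired with a decaying lower-derivative one, avoiding any loss of derivative that the dispersive kernel cannot recover.
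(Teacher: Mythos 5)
Your base case has a genuine gap. In the Fourier-splitting step you replace control of $\widehat{u}$ at low frequencies by the Duhamel bound $|\hat u(t,\xi)|\leq|\hat u_0(\xi)|+c|\xi|\int_0^t\|u(s)\|_{L^2}^2\,ds$. On the ball $B(t)=\{|\xi|^\alpha\leq k/(1+t)\}$ this gives
\begin{equation*}
\int_{B(t)}|\hat u(t,\xi)|^2d\xi\leq c(1+t)^{-1/\alpha}\|u_0\|_{L^1}^2+c(1+t)^{-3/\alpha}\Big(\int_0^t\|u(s)\|_{L^2}^2ds\Big)^2 ,
\end{equation*}
and at the first pass the only available information is $\|u(s)\|_{L^2}\leq\|u_0\|_{L^2}$, so the second term is of size $(1+t)^{2-3/\alpha}$. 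The splitting inequality then yields $\|u(t)\|_{L^2}^2\leq c\big[(1+t)^{-1/\alpha}+(1+t)^{2-3/\alpha}\big]$, which is no better than boundedness when $\alpha\geq 3/2$; feeding a rate $\rho$ back in updates it to $\min(1/\alpha,\,3/\alpha-2+2\rho)$, which does not move off $\rho=0$ once $3/\alpha-2\leq0$. (The factor $e^{-(t-s)|\xi|^\alpha}$ is of order one on $B(t)$, so it cannot rescue the estimate, and there is no smallness to exploit.) Hence your iterated Gronwall argument stalls precisely for $3/2\leq\alpha<2$. The paper avoids this by first proving the $L^1$ contraction $\|u(t)\|_{L^1}\leq\|u_0\|_{L^1}$ (Lemma \ref{decayl1}, via the Kato inequality for $-|D|^\alpha$ and a regularized sign function), which gives $\sup_\xi|\hat u(t,\xi)|\leq\|u_0\|_{L^1}$ uniformly in $t$ and makes the Fourier splitting close in one step (Corollary \ref{cor-l2}). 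That lemma is also exactly what delivers the $p=1$, $j=0$ part of the statement, which your Duhamel route does not really cover: the paper's only $L^1$ kernel bound is $\|S_\alpha(\tau)\|_{\dot H^{1,0}}\leq c(1+\tau^{1-3/\alpha})$, which degenerates as $\tau\to0$, so the $[t/2,t]$ portion of an $L^1$ Duhamel estimate is not straightforward, and your $[0,t/2]$ kernel bound is only justified by Lemma \ref{lem-estlin} for $p\in[2,\infty]$.

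For the higher derivatives and general $p$, your outline (Leibniz expansion, energy estimate plus Fourier splitting for $\partial_x^ju$, bootstrap on $M_p$) is plausible in spirit but inherits the same missing ingredient, since low-frequency control of $\widehat{\partial_x^ju}=(i\xi)^j\hat u$ again rests on a pointwise bound for $\hat u$. The paper proceeds differently: it never performs an energy estimate at level $j$, but writes Duhamel for $\partial_x^ju$, treats the top-order factor via $\|\partial_x^{j+1}u\|_{L^2}\leq\|\partial_x^ju\|_{L^2}^{1-1/N}\|\partial_x^{j+N}u\|_{L^2}^{1/N}$ together with \eqref{as3}, so that $\sup_{t/2\leq s\leq t}\|\partial_x^ju(s)\|_{L^2}$ only enters with a prefactor $t^{-\gamma}$ or a power $1-1/N$, and closes with the elementary Lemma \ref{lemel} (see Lemma \ref{lemti} and Corollary \ref{estuHj}); the passage to $p=\infty$ and intermediate $p$ is then pure interpolation, which is why $u_0\in H^{j+1}$ suffices. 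Until you supply the $L^1$ bound (or an equivalent uniform bound on $\hat u$), your argument does not establish the theorem for $3/2\leq\alpha<2$, and the $p=1$ case remains unproved.
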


\vskip 0.5cm

Next we find the first term in the asymptotic expansion of the solution.
\begin{theorem}\label{th-ordre1}Let $p\in[2,\infty]$ and $j\in\N$. We assume that $u_0\in H^{j+3}(\R)\cap L^1(\R)$ and that the solution $u$
satisfies (\ref{as2})-(\ref{as3}). Then, for all $t>0$,
$$\|u(t)-S_\alpha(t)\ast u_0\|_{\dot{H}^{p,j}}\leq c\left\{\begin{array}{lll}(1+t)^{(-(1-1/p)/\alpha-j/\alpha)-1/\alpha}
&\textrm{for} & 0<\alpha<1,\\ (1+t)^{(-(1-1/p)-j)-1}\log(1+t) &\textrm{for} & \alpha=1,\\ (1+t)^{(-(1-1/p)/\alpha-j/\alpha)-(2/\alpha-1)}
 &\textrm{for} & 1<\alpha<2.\end{array}\right.$$
\end{theorem}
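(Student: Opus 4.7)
The plan is to start from the Duhamel formulation (\ref{duhamel}), which immediately gives
\begin{equation*}
u(t)-S_\alpha(t)\ast u_0=-\frac12\int_0^tS_\alpha(t-s)\ast\partial_x u^2(s)\,ds,
\end{equation*}
and hence after moving the $\partial_x^j$ and the extra $\partial_x$ onto the kernel or the nonlinearity as convenient,
\begin{equation*}
\|u(t)-S_\alpha(t)\ast u_0\|_{\dot H^{p,j}}\leq\frac12\int_0^t\|\partial_x^{j+1}S_\alpha(t-s)\ast u^2(s)\|_{L^p}\,ds.
\end{equation*}
I would split this integral at $s=t/2$ and treat the two regions by complementary means, feeding in the linear decay rates already obtained in Theorem \ref{th-decayrate}.

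On $[0,t/2]$ I would put every derivative on the kernel and use $\|\partial_x^{j+1}S_\alpha(t-s)\ast u^2(s)\|_{L^p}\leq\|\partial_x^{j+1}S_\alpha(t-s)\|_{L^p}\|u^2(s)\|_{L^1}$. The bound $\|\partial_x^{j+1}S_\alpha(\tau)\|_{L^p}\leq c\tau^{-(1-1/p)/\alpha-(j+1)/\alpha}$ for $p\in[2,\infty]$ follows from Hausdorff--Young and scaling applied to $\widehat{\partial_x^{j+1}S_\alpha}(\tau,\xi)=(i\xi)^{j+1}e^{(i\xi^3-|\xi|^\alpha)\tau}$, and Theorem \ref{th-decayrate} gives $\|u^2(s)\|_{L^1}=\|u(s)\|_{L^2}^2\leq c(1+s)^{-1/\alpha}$. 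Using $(t-s)\geq t/2$ on this range, the low-time contribution is bounded by
\begin{equation*}
c(1+t)^{-(1-1/p)/\alpha-(j+1)/\alpha}\int_0^{t/2}(1+s)^{-1/\alpha}\,ds,
\end{equation*}
and the residual integral is $O(1)$ when $\alpha<1$, $O(\log(1+t))$ when $\alpha=1$, and $O(t^{1-1/\alpha})$ when $\alpha>1$. These three evaluations reproduce exactly the three right-hand sides advertised in the statement.

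On $[t/2,t]$, where $(t-s)$ is small, I instead transfer the derivatives onto $u^2$ and use the $L^2\to L^p$ linear bound $\|S_\alpha(\tau)\ast f\|_{L^p}\leq c\tau^{-(1/2-1/p)/\alpha}\|f\|_{L^2}$ (same Fourier-side argument). A Leibniz expansion
\begin{equation*}
\|\partial_x^{j+1}u^2(s)\|_{L^2}\leq c\sum_{k=0}^{j+1}\|\partial_x^ku(s)\|_{L^\infty}\|\partial_x^{j+1-k}u(s)\|_{L^2}\leq c(1+s)^{-3/(2\alpha)-(j+1)/\alpha}
\end{equation*}
again by Theorem \ref{th-decayrate} (which is where the extra regularity $u_0\in H^{j+3}$ will be used, to control the highest-order term by its $L^2$ decay and a sibling factor in $L^\infty$), gives a high-time contribution bounded by $c(1+t)^{-3/(2\alpha)-(j+1)/\alpha}\int_{t/2}^t(t-s)^{-(1/2-1/p)/\alpha}\,ds$, and a direct exponent computation matches the target rate in the $1<\alpha<2$ regime while leaving it dominated by the low-time part when $\alpha\leq 1$.

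The main obstacle I anticipate is the time-singularity at $s=t$: the high-time estimate requires $(1/2-1/p)/\alpha<1$ for the integral in $s$ to converge, which fails for large $p$ once $\alpha$ is sufficiently small. This will force me to use the more flexible $L^r\to L^p$ version of the propagator bound with $r$ chosen close to $p$, paired with interpolated bounds on $\|\partial_x^{j+1}u^2\|_{L^r}$ obtained by combining the $L^2$ and $L^\infty$ decay rates of Theorem \ref{th-decayrate}. The assumption $u_0\in H^{j+3}$ is precisely what allows these interpolation products to invoke Theorem \ref{th-decayrate} at every needed Sobolev index. The logarithmic loss at $\alpha=1$ then enters only through the low-time integral $\int_0^{t/2}(1+s)^{-1}\,ds$, which is where the threshold $\alpha=1$ makes itself felt.
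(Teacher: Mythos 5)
Your decomposition is exactly the paper's for the core case: Duhamel, split at $s=t/2$, kernel bound (\ref{estSLp}) times $\|u(s)\|_{L^2}^2$ on $[0,t/2]$ (this is where the three regimes $\alpha<1$, $\alpha=1$, $\alpha>1$ are produced), and a smoothing estimate on $[t/2,t]$ fed by Theorem \ref{th-decayrate}; for $p=2$ your argument closes and matches the paper's (the paper additionally splits $|\xi|\lessgtr1$ there, but your cruder bound suffices). The genuine gap is the regime you yourself flag, $p$ large and $\alpha\le 1/2-1/p$, and the fix you propose does not work. An $L^r\to L^p$ propagator bound with exponent $\tau^{-(1/r-1/p)/\alpha}$ is available by Hausdorff--Young only for $r\le 2$ (you need $\|\hat f\|_{L^{r'}}\lesssim\|f\|_{L^r}$), and with $r\le2$ the integrability condition $(1/r-1/p)/\alpha<1$ is no weaker than the one you started with; if instead you try Young's inequality with the kernel in $L^q$, $q$ close to $1$, you run into the dispersive part of $S_\alpha$: the only available short-time bound is Lemma \ref{lem-estlin}, $\|S_\alpha(\tau)\|_{L^1}\le c(1+\tau^{1-3/\alpha})$, and $\tau^{1-3/\alpha}$ is non-integrable at $\tau=0$ for every $\alpha<3/2$ (the Airy factor is not in $L^1$ uniformly as $\tau\to0$), so the near-diagonal integral diverges. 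In short, below $L^2$ on the kernel side the unimodular phase $e^{i\tau\xi^3}$ is no longer harmless, and no choice of $r$ near $p$ rescues the singular regime.

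The missing idea is the paper's reduction: prove the theorem only for $p=2$ (where Plancherel kills the dispersive phase entirely), with $u_0\in H^{j+2}$, and then obtain $p=\infty$ by applying inequality (\ref{ele1}), $\|f\|_{L^\infty}^2\le\|f\|_{L^2}\|f_x\|_{L^2}$, to $f=\partial_x^j\big(u(t)-S_\alpha(t)\ast u_0\big)$, i.e.\ by invoking the $p=2$ result at the indices $j$ and $j+1$; intermediate $p$ follow by interpolation. A quick exponent check shows the geometric mean of the rates at $j$ and $j+1$ is exactly the stated rate at $p=\infty$. This is also where the hypothesis $u_0\in H^{j+3}$ actually enters (one extra derivative to run the $p=2$ case at level $j+1$), rather than in the interpolated Leibniz products as you surmise. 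With that reduction in place, your $[0,t/2]$ and $[t/2,t]$ estimates at $p=2$ are correct and complete the proof (modulo the trivial remark that for $t\le1$ the difference is bounded, using $u\in C_b(\R^+;H^j)$).
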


In view of Theorems \ref{th-decayrate} and \ref{th-ordre1}, it is
clear that decay rate of $u(t)-S_\alpha(t)\ast u_0$ in
$\dot{H}^{p,j}$-norm is better than when considering only $u(t)$.
In order to find other terms in the asymptotic expansion, we need
to consider separately the cases $0<\alpha<1$, $\alpha=1$ and
$1<\alpha<2$. \vskip 0.5cm

When $0<\alpha<1$ or $\alpha=1$, the difference between the asymptotic behavior of the first and second term is subtle.
For the first term, we have $\|u(t)-S_\alpha(t)\|_{\dot{H}^{p,j}}=O(t^{-(1-1/p)/\alpha-j/\alpha-1/\alpha})$ (when $\alpha<1$),
whereas for the second one, say $w(t)$, we have $\|u(t)-S_\alpha(t)-w(t)\|_{\dot{H}^{p,j}}=o(t^{-(1-1/p)/\alpha-j/\alpha-1/\alpha})$.
The following result holds for $\alpha\leq 1$.

\begin{theorem}\label{th-ordre2.1} Suppose $p\in[2,\infty]$, $j\in\N$, $u_0\in H^{j+3}(\R)\cap L^1(\R)$ and that (\ref{as2})-(\ref{as3}) are verified.
\begin{enumerate}[(i)]
\item If $0<\alpha<1$, then
\begin{equation}\label{asympa1}t^{((1-1/p)/\alpha+j/\alpha)+1/\alpha}\Big\|u(t)-S_\alpha(t)\ast u_0+\frac 12\Big(\int_0^\infty\intr u^2(s,y)dyds\Big)
\partial_xG_\alpha(t)\Big\|_{\dot{H}^{p,j}}\rightarrow 0\end{equation} as $t\rightarrow \infty$.
\item If $\alpha=1$, then
\begin{equation}\label{asympa2}\frac{t^{(1-1/p)+j+1}}{\log t}\Big\|u(t)-S_1(t)\ast u_0+\frac{M^2}{4\pi}(\log t)\partial_xG_1(t)\Big\|_{\dot{H}^{p,j}}\rightarrow 0
\end{equation} where $M=\M_0(u_0)=\intr u_0$.
\end{enumerate}
\end{theorem}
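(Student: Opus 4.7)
The starting point is the Duhamel formula (\ref{duhamel}), which yields $u(t)-S_\alpha(t)\ast u_0=-\tfrac{1}{2}\int_0^t S_\alpha(t-s)\ast\partial_xu^2(s)\,ds$. The overall strategy is to compare this nonlinear integral to a fixed multiple of $\partial_xG_\alpha(t)$ by splitting the $s$-integration at $t/2$, applying the linear asymptotic expansion of Theorem \ref{complin} on the near part, using semigroup smoothing on the far part, and then identifying the limiting coefficient via integrability (case (i)) or an explicit Poisson-kernel computation (case (ii)).

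On $[0,t/2]$, $t-s\geq t/2\geq 1$, so Theorem \ref{complin} with $N=0$ applied to $u^2(s)$ (with $j$ replaced by $j+1$ to absorb the outer $\partial_x$) gives
$$S_\alpha(t-s)\ast\partial_xu^2(s)=\M_0(u^2(s))\,\partial_xG_\alpha(t-s)+R(t,s),$$
with $\|R(t,s)\|_{\dot H^{p,j}}\lesssim(t-s)^{-(1-1/p)/\alpha-(j+1)/\alpha-1/\alpha}$; integrated in $s$, this error is $O(t^{-(1-1/p)/\alpha-j/\alpha-2/\alpha+1})$, which is $o$ of the target $t^{-(1-1/p)/\alpha-j/\alpha-1/\alpha}$ exactly when $\alpha<1$ (the equality $\alpha=1$ is responsible for the $\log$ factor). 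Then I freeze the shift by writing $\partial_xG_\alpha(t-s)-\partial_xG_\alpha(t)=\int_{t-s}^t|D|^\alpha\partial_xG_\alpha(\tau)\,d\tau$; self-similarity of $G_\alpha$ and the mean-value bound $(t-s)^{-a}-t^{-a}\lesssim s\,(t-s)^{-a-1}$ (for $s\leq t/2$), combined with $\M_0(u^2(s))=\|u(s)\|_{L^2}^2\lesssim(1+s)^{-1/\alpha}$ from Theorem \ref{th-decayrate}, keep the shift error within the required rate. For the far-time piece $\int_{t/2}^t$, $s\sim t$ is large while $\tau=t-s$ is small: I bound $\|S_\alpha(\tau)\ast\partial_xu^2(s)\|_{\dot H^{p,j}}$ through the $L^q$--$L^p$ smoothing of $S_\alpha$ (inherited from $G_\alpha$ since the dispersive factor $e^{i\tau\xi^3}$ is $L^2$-unitary) together with $\|\partial_xu^2(s)\|_{L^q}\lesssim\|u(s)\|_{L^{q_1}}\|\partial_xu(s)\|_{L^{q_2}}$; the extra powers $t^{-k/\alpha}$ from both factors yield an integrable $\tau$-singularity and a contribution that is $o$ of the target.

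Collecting in case (i), the surviving main term is $-\tfrac{1}{2}\bigl(\int_0^{t/2}\M_0(u^2(s))\,ds\bigr)\partial_xG_\alpha(t)$. Since $\|u(s)\|_{L^2}^2\lesssim(1+s)^{-1/\alpha}$ is integrable on $[0,\infty)$ precisely for $\alpha<1$, dominated convergence gives $\int_0^{t/2}\M_0(u^2(s))\,ds\to A$, and the tail $\int_{t/2}^\infty\M_0(u^2(s))\,ds\,\|\partial_xG_\alpha(t)\|_{\dot H^{p,j}}$ is $o$ of the target, establishing (\ref{asympa1}). In case (ii) the same scheme applies, but $\M_0(u^2(s))\sim s^{-1}$ so the integral diverges logarithmically; I pin down the coefficient by substituting the first-order expansion of Theorem \ref{th-ordre1} at $p=2$, $j=0$ (upgraded to a little-$o$ remainder), which gives $\|u(s)\|_{L^2}^2=M^2\|G_1(s)\|_{L^2}^2+o(s^{-1})$. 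The explicit Poisson-kernel formula $G_1(s,x)=s/[\pi(s^2+x^2)]$ yields $\|G_1(s)\|_{L^2}^2=1/(2\pi s)$, hence $\int_1^t\M_0(u^2(s))\,ds=\tfrac{M^2}{2\pi}\log t+o(\log t)$; the Duhamel prefactor $-1/2$ then produces the announced coefficient $-M^2/(4\pi)$ and proves (\ref{asympa2}).

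The main obstacle is producing little-$o$ (not $O$) bounds at the threshold exponent, uniformly in $p\in[2,\infty]$ and $j\in\N$. Concretely, this requires (a) a dominated-convergence treatment of $\int_{t/2}^\infty\M_0(u^2(s))\,ds\to 0$ in case (i), and (b) in case (ii), upgrading the $O$-bound of Theorem \ref{th-ordre1} to $\|u(s)-MG_1(s)\|_{L^2}=o(s^{-1/2})$ (either by revisiting its proof or by invoking dominated convergence inside it). A secondary technical nuisance is the moment hypothesis $u^2(s)\in L^1((1+|x|)\,dx)$ needed to apply Theorem \ref{complin}, which forces weighted-in-$x$ estimates on $u(s)$ propagated from a weighted norm on $u_0$ via Duhamel.
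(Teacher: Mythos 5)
Your overall architecture --- Duhamel formula, splitting the $s$-integration at $t/2$, extracting the coefficient $\int_0^\infty\|u(s)\|_{L^2}^2\,ds$ in case (i), and identifying the logarithmic coefficient in case (ii) from $\|G_1(s)\|_{L^2}^2=1/(2\pi s)$ (this is exactly the paper's Lemma \ref{limu1}) --- agrees with the paper. The genuine gap is in the step where you replace $S_\alpha(t-s)\ast\partial_x u^2(s)$ by $\M_0(u^2(s))\,\partial_xG_\alpha(t-s)$ on $[0,t/2]$ by applying Theorem \ref{complin} (equivalently Corollary \ref{estG}) with $h=u^2(s)$. That estimate carries the factor $\|u^2(s)\|_{L^1(|x|\,dx)}$, so you need the first spatial moment of $u^2(s)$ to be finite and uniformly controlled in $s$. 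You flag this as a ``secondary technical nuisance,'' but it is an obstruction: the theorem assumes only $u_0\in H^{j+3}(\R)\cap L^1(\R)$, so there is no weighted norm on $u_0$ to propagate; and even if one were added, for $\alpha\leq 1$ (precisely the regime of this theorem) the kernels $G_\alpha(t,\cdot)$ and $S_\alpha(t,\cdot)$ have tails of order $|x|^{-1-\alpha}$ and hence \emph{infinite} first moment, so spatial moments are not transported by Duhamel in any straightforward way. Closing this would require pointwise spatial decay estimates on $u(s,\cdot)$ that are established nowhere and are not needed by the paper's argument.

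The paper avoids the moment expansion of $u^2(s)$ entirely. It writes the main term as $\int_0^t\intr\partial_x[G_\alpha(t-s,\cdot-y)-G_\alpha(t,\cdot)]\,u^2(s,y)\,dy\,ds$ and splits the $(s,y)$-domain into $\{s\leq\delta t,\ |y|\leq\delta t^{1/\alpha}\}$ --- where self-similarity plus continuity of translations in $L^2$ give a uniform $\eps$-smallness after rescaling --- and its complement, where the crude triangle inequality together with dominated convergence of $\int_0^\infty\int_{|y|\geq\delta t^{1/\alpha}}u^2$ suffices. This uses only $u^2\in L^1(dy\,ds)$, which Theorem \ref{th-decayrate} provides. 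Your time-shift freezing via $\partial_xG_\alpha(t-s)-\partial_xG_\alpha(t)=\int_{t-s}^t|D|^\alpha\partial_xG_\alpha(\tau)\,d\tau$ is fine (and quantitatively cleaner than the paper's handling of the $s$-shift), but the spatial shift $\cdot-y$ versus $\cdot$ is exactly where your moment hypothesis enters and where the argument breaks; you would need to replace it by the paper's $\Omega_1/\Omega_2$ splitting or an equivalent moment-free device. A minor additional remark: in case (ii) no upgrade of Theorem \ref{th-ordre1} to a little-$o$ bound is needed --- the stated $O(s^{-3/2}\log s)$ bound together with \eqref{estSGlin} already makes $\int_1^t\intr|u^2-(MG_1)^2|\,dy\,ds$ convergent, which is all that Lemma \ref{limu1} requires.
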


\begin{remark}In the case $\alpha<1$, the integral $\int_0^\infty\intr u^2(s,y)dyds$ which
 appears in (\ref{asympa1}) is convergent due to Theorem \ref{th-decayrate} :
$$\int_0^\infty\intr u^2(s,y)dyds=\int_0^\infty\|u(s)\|_{L^2}^2ds\leq c\int_0^\infty(1+s)^{-1/\alpha}ds<\infty.$$
\end{remark}
\vskip 0.5cm

Now we deal with the case $1<\alpha<2$. In this situation we get
an asymptotic expansion of the solution at the rate
$O(t^{-(1-1/p)/\alpha-j/\alpha-1/\alpha})$
 (in $\dot{H}^{p,j}$-norm, and for almost every $\alpha$) but we need more than two terms in this expansion to derive it.
The main idea is to use the successive terms $F^n(t)$
  which appear in the Picard iterative scheme applied to the Duhamel formulation
  (\ref{duhamel}), i.e.
$$\left\{\begin{array}{ll}F^0(t)=S_\alpha(t)\ast u_0,\\ F^{n+1}(t)=S_\alpha(t)\ast u_0-\frac 12\int_0^tS_\alpha(t-s)\ast \partial_x(F^n(s))^2ds.\end{array}\right.$$

\begin{theorem}\label{th-ordre2.2}Let $1<\alpha<2$, $p\in[2,\infty]$, $j\in\N$ and $u_0\in H^{j+3}(\R)\cap L^1(\R)$.
Suppose that conditions (\ref{as2}) and (\ref{as3}) are satisfied.
\begin{enumerate}[(i)]
\item If $\frac{2N+1}{N+1}<\alpha<\frac{2N+3}{N+2}$ for a $N\in \N$, then
$$\|u(t)-F^{N+1}(t)\|_{\dot{H}^{p,j}}\leq c(1+t)^{-(1-1/p)/\alpha-j/\alpha-1/\alpha}.$$
\item If $\alpha=\frac{2N+3}{N+2}$ for a $N\in\N$, then
$$\|u(t)-F^{N+1}(t)\|_{\dot{H}^{p,j}}\leq c(1+t)^{-(1-1/p)/\alpha-j/\alpha-1/\alpha}\log(1+t).$$
\end{enumerate}
\end{theorem}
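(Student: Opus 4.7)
The plan is to iterate the Duhamel formula: each Picard step should buy an additional factor $t^{-(2/\alpha-1)}$ in the decay of $u - F^n$, until the rate saturates at $t^{-1/\alpha}$. Subtracting the Duhamel identities for $u$ and $F^{n+1}$ and writing $E^n = u - F^n$, the factorization $u^2 - (F^n)^2 = E^n(u + F^n)$ yields
\[
E^{n+1}(t) = -\frac{1}{2}\int_0^t S_\alpha(t-s)\ast \partial_x\bigl[E^n (u + F^n)\bigr](s)\,ds,
\]
which is the mechanism that converts the current error rate into an improved one.

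A preliminary step is to show, by induction on $n$, that the iterates inherit the decay of $u$ from Theorem \ref{th-decayrate}: $\|F^n(t)\|_{\dot{H}^{p,j}} \leq c(1+t)^{-(1-1/p)/\alpha - j/\alpha}$ for all $p \in [2,\infty]$ and $j \in \N$. The base case is the linear decay of $S_\alpha(t)\ast u_0$; the inductive step uses the definition of $F^{n+1}$ together with the $L^p$ smoothing of $S_\alpha$ and the bilinear bound $\|(F^n)^2\|_{L^1} \leq \|F^n\|_{L^2}^2$, after splitting $\int_0^t$ at $s = t/2$ as in the proof of Theorem \ref{th-decayrate}. The effective exponent one reads off is $1 - 2/\alpha < 0$, so the nonlinear contribution is strictly better than the target.

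Next, set $\gamma_n = \min\bigl(1/\alpha,\ (n+1)(2/\alpha-1)\bigr)$. The central claim I would prove by induction on $n$ is
\[
\|E^n(t)\|_{\dot{H}^{p,j}} \leq c\,(1+t)^{-(1-1/p)/\alpha - j/\alpha - \gamma_n},
\]
for $p \in [2,\infty]$ and $j \in \N$, with a factor $\log(1+t)$ added exactly when $(n+1)(2/\alpha - 1) = 1/\alpha$. The case $n = 0$ is Theorem \ref{th-ordre1} since $\gamma_0 = 2/\alpha - 1$. For the inductive step I apply $\partial_x^j$ to the displayed identity and split the time integral at $s = t/2$. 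On $[0, t/2]$, all $j+1$ derivatives are put on the kernel, using $\|\partial_x^{j+1} S_\alpha(\tau)\ast g\|_{L^p} \leq c\,\tau^{-(1-1/p)/\alpha - (j+1)/\alpha}\|g\|_{L^1}$ and the H\"older bound $\|E^n(u+F^n)\|_{L^1} \leq \|E^n\|_{L^2}\|u+F^n\|_{L^2}$. On $[t/2, t]$, the $j+1$ derivatives are pushed (by Leibniz) onto the nonlinearity and one invokes $\|S_\alpha(\tau)\ast h\|_{L^p} \leq c\,\tau^{-(1-1/p)/\alpha}\|h\|_{L^1}$, whose singularity is integrable since $\alpha > 1 \geq 1 - 1/p$. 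Summing the two contributions yields the recursion $\gamma_{n+1} = \min\bigl(1/\alpha,\ \gamma_n + 2/\alpha - 1\bigr)$, with the logarithm appearing exactly at the equality.

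The arithmetic matches the statement: $(N+2)(2/\alpha - 1) \geq 1/\alpha$ is equivalent to $\alpha \leq (2N+3)/(N+2)$, while $(N+1)(2/\alpha - 1) < 1/\alpha$ is equivalent to $\alpha > (2N+1)/(N+1)$. Hence in the open range of (i) one reaches $\gamma_{N+1} = 1/\alpha$ with no log, and the endpoint $\alpha = (2N+3)/(N+2)$ hits the critical equality precisely at the last step, yielding (ii). The main obstacle is the coupled induction in $n$ and $j$: the $[t/2,t]$ estimate invokes the inductive hypothesis on $E^n$ at derivative order $j+1$, which is where the regularity $u_0 \in H^{j+3}$ is consumed, and one must check that no intermediate step $1, \dots, N$ is itself critical, so the log in (ii) accumulates only once.
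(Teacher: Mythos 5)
Your proposal is correct and follows essentially the same route as the paper: Step 1 (the iterates $F^n$ inherit the decay of $u$), the induction on $n$ via the Duhamel difference with $u^2-(F^n)^2=E^n(u+F^n)$ and the split at $s=t/2$, and the saturating recursion $\gamma_{n+1}=\min(1/\alpha,\gamma_n+2/\alpha-1)$ are exactly the paper's Steps 1--3, with your criticality condition $(n+2)(2/\alpha-1)=1/\alpha$ equivalent to the paper's $1-\tfrac{1}{2\alpha}-r_0(n)=0$. The only (harmless) deviation is that on $[t/2,t]$ you apply Young's inequality directly with $\|S_\alpha(t-s)\|_{L^p}\lesssim (t-s)^{-(1-1/p)/\alpha}$, where the paper splits into low and high frequencies; both yield the same power of $t$.
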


\begin{remark}The results obtained in this paper for (\ref{eq}) could be certainly adapted to more general dispersive dissipative
equations taking the form \begin{equation}\label{geneq}u_t-|D|^r\partial_x u+|D|^\alpha u+\partial_x f(u)=0,\end{equation} where $f$
is sufficiently smooth function behaving like $u|u|^{q-1}$ at the origin.
Such general models were studied by Dix in \cite{MR1187625}. Similar asymptotic expansion for solutions to (\ref{geneq}) could be
obtained in certain cases, when dissipation is not negligible in comparison with dispersion and nonlinearity :
$$\left\{\begin{array}{ll}\alpha\leq r+1,\\ 0<\alpha< q.\end{array}\right.$$
\end{remark}

The remainder of this paper is organized as follows. In Section
\ref{sec-estlin}, we derive linear estimates and prove Theorem
\ref{complin}. Uniform estimates of the nonlinear solution are
obtained in Section \ref{sec-bound}.
 The decay rate (\ref{decayrate}) is established in Section \ref{sec-decay}.
 Finally, Section \ref{sec-higher} is devoted to the proof of Theorems \ref{th-ordre1}, \ref{th-ordre2.1} and \ref{th-ordre2.2}.

\section{Linear estimates}\label{sec-estlin}
In this section, we prove some estimates related with $S_\alpha(t)$ and $G_\alpha(t)$. Our first lemma is a direct consequence of the self-similarity of $G_\alpha$.

\begin{lemma}
For any $p\in[1,\infty]$ and $j\in\N$,
\begin{equation}\label{estGt}\|G_\alpha(t)\|_{\dot{H}^{p,j}}
= ct^{-(1-1/p)/\alpha-j/\alpha}.\end{equation}
\end{lemma}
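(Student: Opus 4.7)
The plan is to read the estimate directly off the self-similarity relation (\ref{autosi}). Differentiating the identity $G_\alpha(t,x) = t^{-1/\alpha}G_\alpha(1, xt^{-1/\alpha})$ in $x$ a total of $j$ times produces
$$\partial_x^j G_\alpha(t,x) = t^{-(j+1)/\alpha}\,(\partial_x^j G_\alpha)(1, xt^{-1/\alpha}).$$
I would then perform the change of variables $y = xt^{-1/\alpha}$, whose Jacobian is $t^{1/\alpha}$, to obtain
$$\|\partial_x^j G_\alpha(t)\|_{L^p}^p = t^{-(j+1)p/\alpha + 1/\alpha}\,\|\partial_x^j G_\alpha(1)\|_{L^p}^p$$
for $p<\infty$, with the corresponding endpoint identity $\|\partial_x^j G_\alpha(t)\|_{L^\infty} = t^{-(j+1)/\alpha}\|\partial_x^j G_\alpha(1)\|_{L^\infty}$ obtained without a Jacobian factor (the change of variables is a bijection of $\R$). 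Taking $p$-th roots and rewriting the exponent as $-(j+1)/\alpha + 1/(p\alpha) = -(1-1/p)/\alpha - j/\alpha$ gives (\ref{estGt}) with the explicit constant $c = \|\partial_x^j G_\alpha(1)\|_{L^p}$.

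The only ingredient needed beyond this scaling calculation is the finiteness of $c$, i.e.\ $G_\alpha(1) \in H^{p,j}(\R)$ for every $p\in[1,\infty]$ and $j\in\N$. This is precisely the fact already recalled in the notation section from \cite{miao-2006}, and follows from the rapid decay of $\widehat{G_\alpha}(1,\xi) = e^{-|\xi|^\alpha}$ together with standard Fourier arguments. There is no substantive obstacle in this lemma; the whole proof reduces to a single scaling computation plus the integrability input cited above, and the main care is simply bookkeeping of the exponents (and separate treatment of $p=\infty$).
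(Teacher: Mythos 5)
Your proposal is correct and is essentially the paper's own proof: both differentiate the self-similarity identity (\ref{autosi}) $j$ times, change variables $y=xt^{-1/\alpha}$ to extract the factor $t^{-(j+1)/\alpha+1/(p\alpha)}$, and treat $p=\infty$ separately, with the constant $c=\|\partial_x^jG_\alpha(1)\|_{L^p}$ finite by the cited fact $G_\alpha(1)\in H^{p,j}(\R)$. The exponent bookkeeping matches (\ref{estGt}) exactly.
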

\begin{proof} Equality (\ref{autosi}) and a change of variables yield
\begin{align*}\|G_\alpha(t)\|_{\dot{H}^{p,j}} &=\Big(\intr t^{-(j+1)p/\alpha}|\partial_x^jG_\alpha(1,xt^{-1/\alpha})|^pdx\Big)^{1/p}\\
& =t^{-(j+1)/\alpha} t^{1/\alpha p}\Big(\intr|\partial_x^jG_\alpha(1,y)|^pdy\Big)^{1/p}.
\end{align*}
The case $p=\infty$ is straightforward.
\end{proof}

Let us recall the following elementary result which will be
extensively used in our future considerations. A proof of
(\ref{ele2}) can be found in \cite{MR1455428}.
\begin{lemma}\label{elelem}
If $1\leq k\leq j$ and $f\in H^j(\R)$, then \begin{equation}\label{ele1}\|f\|_{L^\infty}^2\leq \|f\|_{L^2}\|f_x\|_{L^2},\quad\textrm{and}\quad
\|\partial_x^kf\|_{L^2}\leq \|f\|_{L^2}^{1-k/j}\|\partial_x^jf\|_{L^2}^{k/j}.\end{equation}
Moreover, for any $f\in L^2((1+|x|)dx)$, one has
\begin{equation}\label{ele2}\|f\|_{L^1}^2\leq c\|f\|_{L^2}\|\partial_\xi\widehat{f}\|_{L^2}.
\end{equation}
\end{lemma}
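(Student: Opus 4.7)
The plan is to treat the three inequalities of Lemma \ref{elelem} independently, as each is a short classical argument. For the pointwise Gagliardo--Nirenberg bound in (\ref{ele1}), I would write, for each $x\in\R$,
$$f(x)^2 = \int_{-\infty}^x 2f(y)f'(y)\,dy = -\int_x^\infty 2f(y)f'(y)\,dy,$$
add these two representations, and apply Cauchy--Schwarz on each half-line, followed by a discrete Cauchy--Schwarz to recombine the two products. This produces $2|f(x)|^2\le 2\|f\|_{L^2}\|f_x\|_{L^2}$, which is what is stated. For the interpolation inequality in (\ref{ele1}) I would pass to the Fourier side and split the integrand so as to apply Hölder:
$$\|\partial_x^kf\|_{L^2}^2 = \intr|\xi|^{2k}|\widehat{f}(\xi)|^2\,d\xi = \intr|\widehat{f}|^{2(j-k)/j}\bigl(|\xi|^{2j}|\widehat{f}|^2\bigr)^{k/j}\,d\xi.$$
Hölder with conjugate exponents $j/(j-k)$ and $j/k$, together with Plancherel, then gives the announced bound.

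For (\ref{ele2}) I would cut the $L^1$-integral at a free scale $R>0$ and bound each piece by Cauchy--Schwarz:
$$\|f\|_{L^1} = \int_{|x|\le R}|f(x)|\,dx + \int_{|x|>R}|f(x)|\,dx \le (2R)^{1/2}\|f\|_{L^2} + cR^{-1/2}\||x|f\|_{L^2},$$
where the constant in the second piece comes from $\int_{|x|>R}|x|^{-2}dx = 2/R$. The weight identity $\||x|f\|_{L^2} = c\|\partial_\xi\widehat{f}\|_{L^2}$, which follows from Plancherel and is meaningful precisely because of the assumption $f\in L^2((1+|x|)dx)$, then lets me replace $\||x|f\|_{L^2}$ by $\|\partial_\xi\widehat{f}\|_{L^2}$. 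Optimizing in $R$ by choosing $R\sim \|\partial_\xi\widehat{f}\|_{L^2}/\|f\|_{L^2}$ balances the two terms and produces $\|f\|_{L^1}\le c\|f\|_{L^2}^{1/2}\|\partial_\xi\widehat{f}\|_{L^2}^{1/2}$; squaring is the claim. A quick sanity check: under the rescaling $f\mapsto f_\lambda(x)=\lambda f(\lambda x)$, both $\|f\|_{L^1}$ and $\|f\|_{L^2}\|\partial_\xi\widehat{f}\|_{L^2}$ are preserved, so the scaling exponents in (\ref{ele2}) are forced.

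None of the steps is a genuine obstacle: the lemma is a compendium of classical estimates and indeed the author cites \cite{MR1455428} for (\ref{ele2}). The only point requiring any care is the optimization in $R$ for (\ref{ele2}), where one must track the constants and invoke the weight hypothesis to make sense of $\||x|f\|_{L^2}$; this is bookkeeping rather than a substantive difficulty.
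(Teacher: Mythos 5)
Your three arguments are correct and are the standard ones; the paper itself offers no proof of this lemma (it only cites \cite{MR1455428} for (\ref{ele2})), so your write-up simply supplies the classical details the author takes for granted. One minor remark on (\ref{ele2}): the hypothesis $f\in L^2((1+|x|)dx)$ controls $\intr |x|\,|f(x)|^2dx$, not $\||x|f\|_{L^2}^2=\intr|x|^2|f(x)|^2dx$, so it does not by itself make the right-hand side finite; your argument (and the inequality) remains valid, the estimate being vacuous when $\partial_\xi\widehat{f}\notin L^2$.
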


Next lemma describes the asymptotic behavior of $S_\alpha(t)$.

\begin{lemma}For any $p\in[1,\infty]$ and $j,N\in\N$,
\begin{equation}\label{estSG}\Big\|S_\alpha(t)-\sum_{n=0}^N \frac{t^n}{n!}(-\partial_x)^{3n}G_\alpha(t)\Big\|_{\dot{H}^{p,j}}
\leq ct^{-(1-1/p)/\alpha-j/\alpha-(3/\alpha-1)(N+1)}.\end{equation}
\end{lemma}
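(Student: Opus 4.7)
The plan is to work entirely on the Fourier side. Since $\widehat{S_\alpha(t)}(\xi)=e^{it\xi^3-t|\xi|^\alpha}$ and $\widehat{(-\partial_x)^{3n}G_\alpha(t)}(\xi)=(-i\xi)^{3n}e^{-t|\xi|^\alpha}=i^{n}\xi^{3n}e^{-t|\xi|^\alpha}$ (using $(-i)^3=i$), one checks that $\frac{t^n}{n!}(-\partial_x)^{3n}G_\alpha(t)$ has Fourier symbol $\frac{(it\xi^3)^n}{n!}e^{-t|\xi|^\alpha}$, so the left-hand side of (\ref{estSG}) is the inverse Fourier transform of
$$\Bigl(e^{it\xi^3}-\sum_{n=0}^N\frac{(it\xi^3)^n}{n!}\Bigr)e^{-t|\xi|^\alpha}.$$
Applying Taylor's integral remainder $e^{iz}-\sum_{n=0}^N\frac{(iz)^n}{n!}=\frac{(iz)^{N+1}}{N!}\int_0^1(1-\theta)^Ne^{i\theta z}\,d\theta$ with $z=t\xi^3$, together with the identity $(i\xi^3)^{N+1}=(-1)^{N+1}(i\xi)^{3(N+1)}$, produces the representation
\begin{equation}\label{repr}
S_\alpha(t)-\sum_{n=0}^N\frac{t^n}{n!}(-\partial_x)^{3n}G_\alpha(t)=\frac{(-t)^{N+1}}{N!}\int_0^1(1-\theta)^N\partial_x^{3(N+1)}K_\theta(t)\,d\theta,
\end{equation}
where $\widehat{K_\theta(t)}(\xi)=e^{i\theta t\xi^3-t|\xi|^\alpha}$.

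The heart of the proof is then the uniform estimate
$$\|\partial_x^{j'}K_\theta(t)\|_{L^p}\leq c\,t^{-(1-1/p)/\alpha-j'/\alpha},\qquad t\geq 1,\ \theta\in[0,1],\ j'\in\N,\ p\in[1,\infty].$$
Granted this, plugging $j'=3(N+1)+j$ into (\ref{repr}) and integrating against $(1-\theta)^N$ produces a factor $c\,t^{N+1}\cdot t^{-(1-1/p)/\alpha-(3(N+1)+j)/\alpha}$, which rearranges into the announced bound $c\,t^{-(1-1/p)/\alpha-j/\alpha-(3/\alpha-1)(N+1)}$.

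The cases $p=\infty$ and $p=2$ are straightforward. Fourier inversion and Plancherel yield $\|\partial_x^{j'}K_\theta(t)\|_{L^\infty}\leq c\|\xi^{j'}e^{-t|\xi|^\alpha}\|_{L^1}$ and $\|\partial_x^{j'}K_\theta(t)\|_{L^2}=c\|\xi^{j'}e^{-t|\xi|^\alpha}\|_{L^2}$; the scaling $\xi=t^{-1/\alpha}\eta$ used in the proof of (\ref{estGt}) converts these into $c\,t^{-(1+j')/\alpha}$ and $c\,t^{-(1/2+j')/\alpha}$, respectively. Note that the oscillating factor $e^{i\theta t\xi^3}$ is invisible to these estimates since it has modulus one.

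The main obstacle is the $L^1$ case, where the dispersive modulation forbids a direct Fourier inversion bound. I would split the dissipation and write $K_\theta(t)=\tilde K_\theta(t)\ast G_\alpha(t/2)$, where $\widehat{\tilde K_\theta(t)}(\xi)=e^{i\theta t\xi^3-(t/2)|\xi|^\alpha}$. Since $G_\alpha(t/2)\geq 0$ and $\|G_\alpha(t/2)\|_{L^1}=\|G_\alpha(1)\|_{L^1}$ by self-similarity, Young's inequality reduces the problem to estimating $\|\partial_x^{j'}\tilde K_\theta(t)\|_{L^1}$. Applying estimate (\ref{ele2}) of Lemma \ref{elelem},
$$\|\partial_x^{j'}\tilde K_\theta(t)\|_{L^1}^{2}\leq c\,\|\partial_x^{j'}\tilde K_\theta(t)\|_{L^2}\,\bigl\|\partial_\xi\bigl[(i\xi)^{j'}e^{i\theta t\xi^3-(t/2)|\xi|^\alpha}\bigr]\bigr\|_{L^2}.$$
Expanding the $\xi$-derivative yields three contributions whose $L^2$-norms scale (again via $\xi=t^{-1/\alpha}\eta$) as $t^{-(j'-1/2)/\alpha}$, $\theta\,t^{1-(j'+5/2)/\alpha}$, and $t^{-(j'-1/2)/\alpha}$; since $\alpha<2<3$, the first and third dominate for $t\geq 1$. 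Combining with $\|\partial_x^{j'}\tilde K_\theta(t)\|_{L^2}\leq c\,t^{-(j'+1/2)/\alpha}$ and taking square roots gives $\|\partial_x^{j'}\tilde K_\theta(t)\|_{L^1}\leq c\,t^{-j'/\alpha}$, as required. The remaining values $p\in(1,2)\cup(2,\infty)$ follow by Riesz--Thorin interpolation between $L^1$ and $L^\infty$.
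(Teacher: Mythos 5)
Your argument is essentially correct and, in substance, a reorganization of the paper's proof rather than a different method: the paper bounds the Taylor remainder pointwise by $(t|\xi|^3)^{N+1}/(N+1)!$ and then estimates $\|\partial_x^jA(t)\|_{L^2}$ and $\|\partial_\xi\mathcal{F}(\partial_x^jA(t))\|_{L^2}$ directly (using the same tools you invoke: Plancherel, the scaling $\xi=t^{-1/\alpha}\eta$, (\ref{ele1}), (\ref{ele2}), and interpolation), whereas you use the exact integral form of the remainder to factor out $t^{N+1}\partial_x^{3(N+1)}$ and reduce everything to a uniform-in-$\theta$ version of Lemma \ref{lem-estlin} for the modulated kernels $K_\theta$. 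That reduction is clean and modular, and your exponent bookkeeping checks out. The price you pay is the restriction $t\geq 1$: when $\partial_\xi$ falls on $e^{i\theta t\xi^3}$ you get $3i\theta t\xi^2$ with no compensating remainder factor, which after scaling costs a factor $t^{1-3/\alpha}$ relative to the target; the paper avoids this by differentiating the whole remainder (its corresponding term is $|\xi|^{j+2}\,|t\xi^3|^{N}\,t$, retaining the factor $|t\xi^3|^N$), so all three contributions scale identically and (\ref{estSG}) is obtained for every $t>0$, not just $t\geq1$. Since the lemma is applied either for $t\geq1$ (Theorem \ref{complin}) or at times bounded away from zero, your version suffices for the paper's purposes, but strictly it does not recover the stated inequality for $0<t<1$. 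Two smaller points: your blanket claim that the uniform $L^1$ bound holds for all $j'\in\N$ is not justified by your computation when $j'=0$ and $\alpha\leq 1/2$ (the term $t\,\||\xi|^{\alpha-1}e^{-(t/2)|\xi|^\alpha}\|_{L^2}$ diverges at $\xi=0$), though this is harmless since you only use $j'=3(N+1)+j\geq 3$; and the splitting $K_\theta(t)=\tilde K_\theta(t)\ast G_\alpha(t/2)$ is unnecessary, as (\ref{ele2}) can be applied to $\partial_x^{j'}K_\theta(t)$ directly with the same outcome.
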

\begin{proof}  Setting $A(t)=S_\alpha(t)-\sum_{n=0}^N\frac{t^n}{n!}(-\partial_x)^{3n}G_\alpha(t)$, we obtain
$$\mathcal{F}(\partial_x^jA(t))(\xi)=(i\xi)^je^{-t|\xi|^\alpha}\Big(e^{it\xi^3}-\sum_{n=0}^N\frac{t^n}{n!}(-i\xi)^{3n}\Big).$$
Using the Taylor expansion of the exponential function, we have
$$\Big|e^{it\xi^3}-\sum_{n=0}^N\frac{(it\xi^3)^n}{n!}\Big|\leq
\frac{(t|\xi|^3)^{N+1}}{(N+1)!}.$$  Thus, Plancherel theorem and
the change of variables $\xi=t^{-1/\alpha}\eta$ give
\begin{align*}\|\partial_x^j A(t)\|_{L^2}^2 &\leq c \intr
|\xi|^{2j}e^{-2t|\xi|^\alpha}(t|\xi|^3)^{2(N+1)}d\xi\\ &=
ct^{2(N+1)} \intr|\xi|^{2(j+3N+3)}e^{-2t|\xi|^\alpha}d\xi\\ &=
ct^{-1/\alpha-2j/\alpha-2(3/\alpha-1)(N+1)},\end{align*} which
yields the result for $p=2$. Now the case $p=\infty$ follows
immediately from (\ref{ele1}). When $p=1$, we use estimate
(\ref{ele2}). One has
\begin{align*} \|\partial_{\xi}\mathcal{F}(\partial_x^jA(t))\|_{L^2}& \leq c\Big(\intr \Big[|
j\xi^{j-1}(t\xi^3)^{N+1}|^2 +|t\xi^{j+\alpha-1}(t\xi^3)^{N+1}|^2
\\&\quad+
|\xi|^{2j}\Big|3it\xi^2e^{it\xi^3}-\sum_{n=0}^N\frac{3n(it)^n\xi^{3n-1}}{n!}\Big|^2\Big]
e^{-2t|\xi|^\alpha}d\xi\Big)^{1/2}\\ &\leq
ct^{N+1}\Big(\intr j|\xi|^{2(j-1+3(N+1))}e^{-2t|\xi|^\alpha}d\xi \Big)^{1/2}\\
&\quad
+ct^{N+2}\Big(\intr|\xi|^{2(j+\alpha-1+3(N+1))}e^{-2t|\xi|^\alpha}d\xi\Big)^{1/2}\\
&\quad
+ct\Big(\intr|\xi|^{2(j+2)}|t\xi^3|^{2N}e^{-2t|\xi|^\alpha}d\xi\Big)^{1/2}\\
&\leq ct^{-1/2\alpha-j/\alpha-(3/\alpha-1)(N+1)+1/\alpha}.
\end{align*}
It follows that (\ref{estSG}) holds true for $p=1$ and then for all $p\in[1,\infty]$ by interpolation.
\end{proof}

\begin{lemma}\label{lem-estlin}For all $p\in[2,\infty]$ and $j\in\N$,
\begin{equation}\label{estSLp}\|S_\alpha(t)\|_{\dot{H}^{p,j}}\leq ct^{-(1-1/p)/\alpha-j/\alpha}\end{equation}
and $$\|S_\alpha(t)\|_{\dot{H}^{1,j}}\leq ct^{-j/\alpha}(1+t^{1-3/\alpha}).$$
\end{lemma}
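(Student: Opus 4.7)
The plan is to split by the range of $p$ and compute directly on the Fourier side. For $p\in[2,\infty]$ the oscillatory phase $e^{it\xi^3}$ has modulus $1$ and is therefore invisible, giving the clean dissipative scaling, whereas for $p=1$ it contributes an authentic extra factor $(1+t^{1-3/\alpha})$.

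For $p\in[2,\infty]$, I would simply invoke Hausdorff--Young with conjugate exponent $p'\in[1,2]$:
$$\|\partial_x^j S_\alpha(t)\|_{L^p}\leq c\,\|\F(\partial_x^j S_\alpha(t))\|_{L^{p'}}=c\Big(\intr|\xi|^{jp'}e^{-tp'|\xi|^\alpha}d\xi\Big)^{1/p'},$$
and then perform the change of variables $\eta=t^{1/\alpha}\xi$ to get $ct^{-(1-1/p)/\alpha-j/\alpha}$ for every $t>0$. Alternatively one can compute the $L^2$ norm directly by Plancherel, the $L^\infty$ norm by $\|\cdot\|_{L^\infty}\leq c\|\widehat{\cdot}\|_{L^1}$, and interpolate; both routes pass through the same Gaussian-type integrals.

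For $p=1$ I would apply inequality (\ref{ele2}) of Lemma \ref{elelem} to $f=\partial_x^j S_\alpha(t)$, which reduces the estimate to bounding $\|f\|_{L^2}$ and $\|\partial_\xi\widehat{f}\|_{L^2}$. The first is $ct^{-(2j+1)/(2\alpha)}$ by Plancherel and scaling. For the second, I would differentiate $\widehat{f}(\xi)=(i\xi)^je^{it\xi^3-t|\xi|^\alpha}$, producing three contributions: the polynomial term $j(i\xi)^{j-1}$, the \emph{dispersive} term $3it\xi^2(i\xi)^j$, and the dissipative term $-t\alpha|\xi|^{\alpha-1}\sgn(\xi)(i\xi)^j$. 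The same scaling $\eta=t^{1/\alpha}\xi$ shows that the polynomial and dissipative terms both scale like $t^{-(2j-1)/(2\alpha)}$, whereas the dispersive term scales like $t^{-(2j-1)/(2\alpha)}\cdot t^{1-3/\alpha}$. Inserting the resulting estimate
$$\|\partial_\xi\widehat{f}\|_{L^2}\leq ct^{-(2j-1)/(2\alpha)}(1+t^{1-3/\alpha})$$
into (\ref{ele2}), taking square roots, and using $(1+x)^{1/2}\leq 1+x$ for $x\geq 0$ would yield the claimed bound.

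The main obstacle is the dispersive contribution $3it\xi^2\widehat{f}$ to $\partial_\xi\widehat{f}$: its prefactor raises the polynomial weight by two orders, so after scaling it produces the surplus power $t^{1-3/\alpha}$. Since $1-3/\alpha<0$ throughout $\alpha\in(0,2)$, this correction is harmless as $t\to\infty$ but unbounded as $t\to 0^+$, and it is precisely this mechanism that forces the $L^1$-estimate to carry the extra $(1+t^{1-3/\alpha})$ factor absent from the $p\geq 2$ bound. All the real work is thus bookkeeping of powers of $t$ in three Gaussian moments; no further ingredients beyond (\ref{ele2}), Plancherel, and the self-similar scaling are needed.
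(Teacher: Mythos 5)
Your estimate for $p\in[2,\infty]$ is fine and is essentially the paper's argument (Plancherel/Hausdorff--Young on the Fourier side, where $|e^{it\xi^3}|=1$, plus scaling and interpolation). The problem is in your $p=1$ argument: you apply (\ref{ele2}) directly to $f=\partial_x^jS_\alpha(t)$, and this fails in part of the range $0<\alpha<2$. Differentiating $\widehat{f}(\xi)=(i\xi)^je^{it\xi^3-t|\xi|^\alpha}$ produces the dissipative term $t\alpha|\xi|^{j+\alpha-1}e^{-t|\xi|^\alpha}$, and for $j=0$ its square $t^2|\xi|^{2(\alpha-1)}e^{-2t|\xi|^\alpha}$ is not integrable near $\xi=0$ as soon as $\alpha\leq 1/2$. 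Equivalently, $xS_\alpha(t,x)\notin L^2(\R)$ (the kernel has heavy tails of order $|x|^{-1-\alpha}$), so $\|\partial_\xi\widehat{f}\|_{L^2}=\infty$ and your claimed bound $\|\partial_\xi\widehat{f}\|_{L^2}\leq ct^{-(2j-1)/(2\alpha)}(1+t^{1-3/\alpha})$ is simply false there; the scaling bookkeeping is correct only when the underlying integral converges, i.e.\ for $j\geq 1$ or $\alpha>1/2$. Since the lemma is asserted for all $0<\alpha<2$ and $j\in\N$, this is a genuine gap, not a technicality.

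The paper avoids exactly this singularity by never applying (\ref{ele2}) to $S_\alpha(t)$ itself: it writes $\|S_\alpha(t)\|_{\dot{H}^{1,j}}\leq\|S_\alpha(t)-G_\alpha(t)\|_{\dot{H}^{1,j}}+\|G_\alpha(t)\|_{\dot{H}^{1,j}}$, uses (\ref{estSG}) with $N=0$ for the difference --- there the Taylor remainder factor $t|\xi|^3$ supplies extra vanishing at $\xi=0$, so the troublesome $|\xi|^{\alpha-1}$ factor becomes $|\xi|^{\alpha+2}$ and the $\partial_\xi$-integral converges for every $\alpha\in(0,2)$ --- and invokes the known fact $G_\alpha(1)\in H^{1,j}(\R)$ together with self-similarity (\ref{estGt}) to get $\|G_\alpha(t)\|_{\dot{H}^{1,j}}=ct^{-j/\alpha}$, which does not pass through (\ref{ele2}) at all. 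To repair your proof you should adopt this decomposition (or otherwise import the $L^1$ bound for the stable kernel $G_\alpha$ from the literature) rather than estimating $\|\partial_\xi\widehat{S_\alpha}(t)\|_{L^2}$ directly; your mechanism for the extra factor $1+t^{1-3/\alpha}$ (the dispersive term $3it\xi^2$) is the right one, but it must act on $S_\alpha-G_\alpha$, not on $S_\alpha$.
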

\begin{proof} For $p=2$,
$\|S_\alpha(t)\|_{\dot{H}^{j}}= \|G_\alpha(t)\|_{\dot{H}^{j}}=ct^{-1/2\alpha-j/\alpha}$. Then (\ref{estSLp}) follows by the first inequality in
(\ref{ele1}) and by interpolation. Concerning the $L^1$-norm, (\ref{estSG}) with $N=0$ and (\ref{estGt}) provide
$$\|S_\alpha(t)\|_{\dot{H}^{1,j}}\leq \|S_\alpha(t)-G_\alpha(t)\|_{\dot{H}^{1,j}}+\|G_\alpha(t)\|_{\dot{H}^{1,j}} \leq ct^{-j/\alpha}(1+t^{1-3/\alpha}).$$
\end{proof}

Now we state a decomposition lemma for convolution products.
\begin{lemma}\label{estconv} Let $p\in[1,\infty]$ and $N\in\N$. For any $h\in L^1((1+|x|)^{N+1}dx)$ and $g\in C^{N+1}(\R)\cap H^{p,N+1}(\R)$,
$$\Big\|g\ast h-\sum_{n=0}^N \frac{(-1)^n}{n!}\M_n(h)\partial_x^ng\Big\|_{L^p} \leq c\|\partial_x^{N+1}g\|_{L^p}\|h\|_{L^1(|x|^{N+1}dx)}.$$
\end{lemma}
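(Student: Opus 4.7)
The plan is to expand $g(x-y)$ in a Taylor series in the $y$ variable around $y=0$ and recognize that the convolution against $h$ produces the moments $\M_n(h)$ from the polynomial part, leaving a remainder that is controlled by $\partial_x^{N+1} g$.

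More precisely, I would start from Taylor's formula with integral remainder,
\begin{equation*}
g(x-y) = \sum_{n=0}^N \frac{(-y)^n}{n!}\partial_x^n g(x) + \frac{(-y)^{N+1}}{N!}\int_0^1 (1-\tau)^N \partial_x^{N+1} g(x-\tau y)\, d\tau,
\end{equation*}
which is valid because $g\in C^{N+1}(\R)$. Multiplying by $h(y)$ and integrating over $y\in\R$ gives
\begin{equation*}
(g\ast h)(x) = \sum_{n=0}^N \frac{(-1)^n}{n!}\Big(\intr y^n h(y)\,dy\Big)\partial_x^n g(x) + R(x),
\end{equation*}
where the polynomial part reproduces exactly $\sum_{n=0}^N \frac{(-1)^n}{n!}\M_n(h)\partial_x^n g(x)$, and the remainder is
\begin{equation*}
R(x) = \frac{(-1)^{N+1}}{N!}\intr y^{N+1}h(y)\int_0^1 (1-\tau)^N \partial_x^{N+1} g(x-\tau y)\,d\tau\, dy.
\end{equation*}

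It then remains to estimate $\|R\|_{L^p}$. Applying Minkowski's inequality in the $y$ and $\tau$ integrals, together with the translation invariance of the $L^p$-norm (so that $\|\partial_x^{N+1} g(\cdot-\tau y)\|_{L^p} = \|\partial_x^{N+1} g\|_{L^p}$), yields
\begin{equation*}
\|R\|_{L^p} \leq \frac{1}{N!}\|\partial_x^{N+1}g\|_{L^p}\intr |y|^{N+1}|h(y)|\,dy\int_0^1 (1-\tau)^N d\tau = \frac{1}{(N+1)!}\|\partial_x^{N+1}g\|_{L^p}\,\|h\|_{L^1(|x|^{N+1}dx)},
\end{equation*}
which is the required bound with $c = 1/(N+1)!$.

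There is essentially no main obstacle: the only subtlety is justifying the use of Minkowski's inequality, which is immediate once one observes that the integrand is jointly measurable and that the hypotheses $h\in L^1((1+|x|)^{N+1}dx)$ and $\partial_x^{N+1}g\in L^p(\R)$ make the double integral absolutely convergent for a.e.\ $x$. The hypothesis $g\in H^{p,N+1}(\R)$ is used precisely here to ensure $\partial_x^{N+1}g \in L^p$; the $C^{N+1}$ regularity is used to write the pointwise Taylor formula.
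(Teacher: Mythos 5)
Your proof is correct and follows exactly the route the paper indicates (its proof is the one-line remark ``Taylor formula and Young inequality''): you expand $g(x-y)$ by Taylor's formula with integral remainder and bound the remainder via Minkowski's inequality and translation invariance, which is precisely the Young-type step intended. Nothing is missing, and your explicit constant $c=1/(N+1)!$ is a nice bonus.
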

\begin{proof} It is an easy consequence of the Taylor formula as well as Young inequality.\end{proof}

Applying Lemma \ref{estconv} with $g=\partial_x^jG_\alpha(t)$ and using estimate (\ref{estGt}), we deduce the
\begin{corollary}\label{estG} If $p\in[1,\infty]$ and $j,N\in\N$, then $$\Big\|G_\alpha(t)\ast h-\sum_{n=0}^N\frac{(-1)^n}{n!}
\M_n(h)\partial_x^nG_\alpha(t)\Big\|_{\dot{H}^{p,j}}\leq ct^{-(1-1/p)/\alpha-j/\alpha-(N+1)/\alpha}\|h\|_{L^1(|x|^{N+1}dx)}$$ for any $h\in L^1((1+|x|)^{N+1}dx)$.
\end{corollary}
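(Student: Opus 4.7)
The plan is to reduce the corollary directly to Lemma \ref{estconv} by choosing $g=\partial_x^jG_\alpha(t)$, since the $\dot H^{p,j}$--norm just amounts to taking $j$ derivatives in $L^p$, and differentiation commutes with convolution against $h$. Concretely, I would write
$$\partial_x^j\Big(G_\alpha(t)\ast h-\sum_{n=0}^N\frac{(-1)^n}{n!}\M_n(h)\partial_x^nG_\alpha(t)\Big)=g\ast h-\sum_{n=0}^N\frac{(-1)^n}{n!}\M_n(h)\partial_x^ng,$$
so the quantity to estimate is exactly the $L^p$--norm that appears on the left-hand side of Lemma \ref{estconv}, applied to the function $g=\partial_x^jG_\alpha(t)$.

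Before invoking the lemma, I would briefly verify its hypotheses: the paper has already noted that $G_\alpha(t)\in H^{p,k}(\R)$ for any $p\in[1,\infty]$ and $k\geq 0$, and smoothness of $G_\alpha(t)$ in $x$ for $t>0$ is immediate from its definition as the inverse Fourier transform of the Schwartz function $e^{-t|\xi|^\alpha}$. Hence $g=\partial_x^jG_\alpha(t)\in C^{N+1}(\R)\cap H^{p,N+1}(\R)$, and Lemma \ref{estconv} applies directly to give
$$\Big\|g\ast h-\sum_{n=0}^N\frac{(-1)^n}{n!}\M_n(h)\partial_x^ng\Big\|_{L^p}\leq c\,\|\partial_x^{N+1}g\|_{L^p}\,\|h\|_{L^1(|x|^{N+1}dx)}.$$

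Finally, I would translate the factor $\|\partial_x^{N+1}g\|_{L^p}=\|\partial_x^{N+1+j}G_\alpha(t)\|_{L^p}=\|G_\alpha(t)\|_{\dot H^{p,j+N+1}}$ into the desired decay rate using the scaling identity (\ref{estGt}), which yields
$$\|G_\alpha(t)\|_{\dot H^{p,j+N+1}}=ct^{-(1-1/p)/\alpha-(j+N+1)/\alpha}.$$
Combining this with the previous inequality produces exactly the bound stated in the corollary. There is essentially no real obstacle here: the argument is a direct concatenation of Lemma \ref{estconv} with the self-similar scaling estimate for $G_\alpha$, and the only thing to watch is the book-keeping that the $\dot H^{p,j}$--norm absorbs the extra $j$ derivatives symmetrically on both sides of the Taylor identity for $G_\alpha(t)\ast h$.
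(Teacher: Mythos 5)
Your proposal is correct and is exactly the paper's own argument: the paper proves this corollary in one line by applying Lemma \ref{estconv} with $g=\partial_x^jG_\alpha(t)$ and invoking the scaling estimate (\ref{estGt}). Your additional verification of the hypotheses of Lemma \ref{estconv} is sound but adds nothing beyond what the paper leaves implicit.
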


We are now in a position to prove Theorem \ref{complin}.

\begin{proof}[Proof of Theorem \ref{complin}] By the triangle inequality,
\begin{eqnarray*}
&&  \Big\|S_\alpha(t)\ast u_0-\sum_{n=0}^N\frac{(-1)^n}{n!}\M_n(u_0)\partial_x^nG_\alpha(t)-\sum_{k=1}^N\frac{t^k}{k!}
\sum_{\ell=0}^{N-1}\frac{(-1)^\ell}{\ell !}\M_\ell(u_0)\partial_x^\ell(-\partial_x)^{3k}G_\alpha(t)\Big\|_{\dot{H}^{p,j}}\\
&&\quad \leq \Big\|S_\alpha(t)\ast u_0-G_\alpha(t)\ast u_0-\sum_{k=1}^N\frac{t^k}{k!}(-\partial_x)^{3k}G_\alpha(t)\ast
u_0\Big\|_{\dot{H}^{p,j}}\\& &\qquad+\Big\|G_\alpha(t)\ast u_0-\sum_{n=0}^N \frac{(-1)^n}{n!}\M_n(u_0)\partial_x^nG_\alpha(t)\Big\|_{\dot{H}^{p,j}}\\
 &&\qquad+\sum_{k=1}^N\frac{t^k}{k!}\Big\|(-\partial_x)^{3k}G_\alpha(t)\ast u_0-\sum_{\ell=0}^{N-1}\frac{(-1)^\ell}{\ell !}\M_\ell(u_0)
 \partial_x^\ell(-\partial_x)^{3k}G_\alpha(t)\Big\|_{\dot{H}^{p,j}}\\ &&\quad := I+II+III.
\end{eqnarray*}
$I$ is estimated with the help of (\ref{estSG}),
\begin{align*}I &=\Big\|\partial_x^j\Big(S_\alpha(t)-\sum_{k=0}^N\frac{t^k}{k!}(-\partial_x)^{3k}G_\alpha(t)\Big)\ast u_0\Big\|_{L^p} \\ &\leq
\Big\|\partial_x^j\Big(S_\alpha(t)-\sum_{k=0}^N\frac{t^k}{k!}(-\partial_x)^{3k}G_\alpha(t)\Big)\Big\|_{L^p}\|u_0\|_{L^1}\\
&\leq ct^{-(1-1/p)/\alpha-j/\alpha-(3/\alpha-1)(N+1)}\leq
ct^{-(1-1/p)/\alpha-j/\alpha-(N+1)/\alpha},\end{align*} since $\alpha<2$.
Concerning $II$, we use Corollary \ref{estG} as follows :
$$II\leq ct^{-(1-1/p)/\alpha-j/\alpha-(N+1)/\alpha}\|u_0\|_{L^1(|x|^{N+1}dx)}.$$
Finally for the term $III$, Corollary \ref{estG} allows us to conclude
\begin{align*}III &\leq \sum_{k=1}^{N}\frac{t^k}{k!}\Big\|\partial_x^{3k+j}\Big(G_\alpha(t)\ast u_0-\sum_{\ell=0}^{N-1}\frac{(-1)^\ell}{\ell !}
\M_\ell(u_0)\partial_x^\ell G_\alpha(t)\Big)\Big\|_{L^p}\\ &\leq ct^{-(1-1/p)/\alpha-j/\alpha-N/\alpha}\sum_{k=1}^Nt^{(1-3/\alpha)k}\\
&\leq ct^{-(1-1/p)/\alpha-j/\alpha-N/\alpha+1-3/\alpha}\\ &\leq ct^{-(1-1/p)/\alpha-j/\alpha-(N+1)/\alpha}.\end{align*}
\end{proof}

\section{Uniform estimates of solutions to (\ref{eq})}\label{sec-bound}
We begin by the proof of Theorem \ref{th-decayrate} in the case
$j=0$ and $p=1$.
\begin{lemma}\label{decayl1} Let $u_0\in L^1(\R)\cap L^2(\R)$ and $u$ be a solution of (\ref{eq}) satisfying (\ref{as2}). Then for all $t>0$,
$$\|u(t)\|_{L^1}\leq \|u_0\|_{L^1}.$$
\end{lemma}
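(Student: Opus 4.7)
The idea is a Kato-type $L^1$ argument. Fix $\epsilon>0$ and consider the smooth convex approximation $\eta_\epsilon(s)=\sqrt{s^2+\epsilon^2}-\epsilon$ of $|s|$, which satisfies $\eta_\epsilon\nearrow|\cdot|$, $|\eta_\epsilon'|\le 1$, $\eta_\epsilon'(s)\to\sgn(s)$ as $\epsilon\to 0$ for $s\ne 0$, and $\eta_\epsilon''\ge 0$. By (\ref{as2}), for every $t>0$ the function $u(t)$ lies in $H^\infty(\R)$ and decays at infinity, so multiplying the equation (\ref{eq}) by $\eta_\epsilon'(u)$ and integrating in $x$ yields
\begin{equation*}
\frac{d}{dt}\int_\R\eta_\epsilon(u)\,dx+\int_\R\eta_\epsilon'(u)u_{xxx}\,dx+\int_\R\eta_\epsilon'(u)|D|^\alpha u\,dx+\int_\R\eta_\epsilon'(u)uu_x\,dx=0.
\end{equation*}

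Each term is then analyzed separately. The transport term is conservative: with $\Phi_\epsilon$ defined by $\Phi_\epsilon'(s)=s\eta_\epsilon'(s)$, it equals $\int_\R\partial_x\Phi_\epsilon(u)\,dx=0$ by decay. Two integrations by parts convert the dispersive term into $\tfrac{1}{2}\int_\R\eta_\epsilon'''(u)u_x^3\,dx$. For the dissipative term the crucial point is a Kato-type inequality: from the L\'evy kernel representation $|D|^\alpha u(x)=c_\alpha\,\mathrm{p.v.}\!\int_\R(u(x)-u(y))|x-y|^{-1-\alpha}dy$ (with the standard interpretation when $\alpha\le 1$) and the convexity bound $\eta_\epsilon'(a)(a-b)\ge\eta_\epsilon(a)-\eta_\epsilon(b)$, one obtains pointwise $\eta_\epsilon'(u)|D|^\alpha u\ge|D|^\alpha\eta_\epsilon(u)$; integrating and using $\widehat{|D|^\alpha f}(0)=0$ yields $\int_\R\eta_\epsilon'(u)|D|^\alpha u\,dx\ge 0$.

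The main obstacle is to show that the dispersive remainder $\tfrac{1}{2}\int_\R\eta_\epsilon'''(u)u_x^3\,dx$ vanishes as $\epsilon\to 0$. Since $|\eta_\epsilon'''(s)|=3\epsilon^2|s|(s^2+\epsilon^2)^{-5/2}\to 0$ for $s\ne 0$, I would split according to $\{|u|\ge\delta\}$, where $|\eta_\epsilon'''(u)|\le 3\epsilon^2/\delta^4$ combined with $u_x\in L^3(\R)$ drives the contribution to zero as $\epsilon\to 0$, and $\{|u|<\delta\}$, where a co-area argument exploiting the $H^\infty$-regularity of $u(t)$ gives a bound uniform in $\epsilon$ that tends to $0$ as $\delta\to 0$. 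Passing to the limit $\epsilon\to 0$ (monotone convergence for the time-derivative term) then gives $\frac{d}{dt}\|u(t)\|_{L^1}\le 0$ on $]0,\infty[$, i.e.\ $\|u(t)\|_{L^1}\le\|u(s)\|_{L^1}$ for all $0<s\le t$.

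To extend to $s=0$, I would use a density argument: approximate $u_0$ in $L^1(\R)\cap L^2(\R)$ by Schwartz functions $u_{0,n}$, for which the corresponding solutions $u_n$ from \cite{MR1889080} are $L^1$-continuous at $t=0$, so the monotonicity proved above gives $\|u_n(t)\|_{L^1}\le\|u_{0,n}\|_{L^1}$. Continuity of the data-to-solution map in $L^2(\R)$ yields $u_n(t)\to u(t)$ in $L^2$, hence a.e.\ along a subsequence, and Fatou's lemma concludes $\|u(t)\|_{L^1}\le\liminf_n\|u_{0,n}\|_{L^1}=\|u_0\|_{L^1}$.
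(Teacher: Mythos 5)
Your overall strategy is the same as the paper's: regularize the sign function, observe that the nonlinearity is conservative, handle $|D|^\alpha$ by a Kato-type inequality (your kernel/convexity argument is a fine substitute for the paper's use of the $L^1$-contraction of $e^{-s|D|^\alpha}$), and reduce the whole lemma to controlling the third-order dispersive term. The final density/Fatou step to reach $t=0$ is also fine. The problem is that the one step carrying the real difficulty --- showing that the dispersive remainder vanishes --- does not work as you sketch it.

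Integrating by parts once more, $\tfrac12\intr\eta_\eps'''(u)u_x^3\,dx=-\intr\eta_\eps''(u)u_xu_{xx}\,dx$ with $\eta_\eps''(s)=\eps^2(s^2+\eps^2)^{-3/2}$, and $\eta_\eps''$ is an approximate identity of total mass $2$ concentrating at $s=0$. Hence, if $u(t,\cdot)$ has finitely many simple zeros $x_i$, the coarea formula gives
$$\tfrac12\intr\eta_\eps'''(u)u_x^3\,dx\;\longrightarrow\;-2\sum_i\sgn\big(u_x(x_i)\big)\,u_{xx}(x_i)\;=\;\intr u_{xxx}\,\sgn u\,dx\quad\text{as }\eps\to0,$$
and this limit is generically \emph{nonzero}: for $u(x)=x(1+x)e^{-x^2}$ one computes $\intr u_{xxx}\sgn u\,dx=-4-4e^{-1}$. (This is consistent with the fact that the Airy group is not an $L^1$-contraction.) Concretely, your splitting fails on $\{|u|<\delta\}$: there $\int_{|v|<\delta}3\eps^2|v|(v^2+\eps^2)^{-5/2}\,dv\sim 2/\eps$, so the coarea bound blows up as $\eps\to0$ instead of being uniform in $\eps$; no choice of $\delta$ repairs this, because the term genuinely converges to the nonzero boundary sum above. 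So the dispersive contribution cannot simply be discarded by refining the regularization; one must either show that this boundary sum is dominated by the dissipative term or argue in an entirely different way. This is exactly the point where the paper concentrates its effort (localizing to the sets $M_\eta=\{|u|<\eta\pi/2,\ u_x\neq0\}$ and exploiting $\mes(M_\eta)\to0$), and it is the step your proposal leaves unproved.
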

\begin{proof} Multiply (\ref{eq}) by $\sgn u$ and then integrate over $\R$ :
\begin{equation}\label{aprioril1}\partial_t\|u(t)\|_{L^1}=-\intr (u_{xxx}+|D|^\alpha u+uu_x)\sgn u.\end{equation}
We are going to show that for each $t>0$, the right-hand side of
(\ref{aprioril1}) is negative. Note that assumption (\ref{as2})
means that for each $t>0$, there exists $c=c(t)$ such that
\begin{equation}\label{reas2}\forall j\geq 0,\ \|\partial_x^j
u(t)\|_{L^2}\leq c.
\end{equation}

Since $-|D|^\alpha$ is the generator of contraction semigroup in
$L^1(\R)$, for each $u\in\mathcal{D}(-|D|^\alpha)$ (the domain of
$-|D|^\alpha$),
\begin{align*}-\intr |D|^\alpha u\sgn u &=\lim_{s\rightarrow 0}\intr \frac{e^{-s|D|^\alpha}u-u}{s}\sgn u\\ &=
\lim_{s\rightarrow 0}\frac{1}{s}\intr\Big(e^{-s|D|^\alpha}u\sgn u-|u|\Big)\\ &\leq \limsup_{s\rightarrow 0} \frac 1s \Big(\intr |e^{-s|D|^\alpha}u|-\intr |u|\Big)\\ &\leq 0.
\end{align*}
This last inequality is sometimes called Kato inequality, see
\cite{MR531061}-\cite{MR1637513}. To show that the other terms in
the right-hand side of (\ref{aprioril1}) are also negative, we
introduce the following smooth regularization of the $\sgn$
function
$$\sgn_\eta(\xi)=\left\{\begin{array}{lll}1 &\rm{if}& \xi>\eta\pi/2,\\ \sin(\xi/\eta)&\rm{if}& |\xi|\leq \eta\pi/2,\\ -1 &\rm{if}& \xi<-\eta\pi/2.\end{array}\right.$$
Then, an integration by parts gives
\begin{align*}-\intr uu_x\sgn u  &=-\lim_{\eta\rightarrow 0}\intr uu_x\sgn_\eta u =\frac 12\lim_{\eta\rightarrow 0}\intr u^2u_x\sgn_\eta'u.
\end{align*}
On the other hand, $\sgn_\eta'$ has its support in
$[-\eta\pi/2,\eta\pi/2]$ and $|\sgn_\eta'|\leq 1/\eta$, hence
setting $M_\eta=\{x : |u|< \eta\pi/2, u_x\neq 0\}$, one has
$\mes(M_\eta)\rightarrow 0$ ($\mes$ denotes the Lebesgue measure)
and
$$\Big|\intr u^2u_x.\sgn_\eta'u\Big|\leq \frac 1\eta \int_{M_\eta}|u^2u_x| \leq c\eta\|u_x\|_{L^2}\Big(\int_{M_\eta}\Big)^{1/2}\rightarrow 0$$
as $\eta\rightarrow 0$ by (\ref{reas2}). Thus  $\intr uu_x\sgn u=
0$. We proceed similarly for the last term,
\begin{align*}-\intr u_{xxx}\sgn u &=-\lim_{\eta\rightarrow 0}\intr u_{xxx}\sgn_\eta u=\lim_{\eta\rightarrow 0}\intr u_{xx}u_x\sgn_\eta'u
\end{align*}
and $$\Big|\intr u_{xx}u_x\sgn_\eta'u\Big|\leq \frac
1\eta\int_{M_\eta}|u_{xx}u_x|.$$ Now we define $\widetilde{u}$ by
setting $\widetilde{u}=u$ on $M_\eta$ and $\widetilde{u}=0$
elsewhere. Then by Cauchy-Schwartz,
$$\Big|\intr u_{xx}u_x\sgn_\eta'u\Big|\leq \frac 1\eta\intr |\widetilde{u}_{xx}\widetilde{u}_x|\leq \frac 1\eta \|\widetilde{u}_{xx}\|_{L^2}\|\widetilde{u}_x\|_{L^2}.$$
The second estimate in (\ref{ele1}) and (\ref{reas2}) yield
$$\|\widetilde{u}_x\|_{L^2}\leq \|\widetilde{u}\|_{L^2}^{1/2}\|\widetilde{u}_{xx}\|_{L^2}^{1/2}=\Big(\int_{M_\eta}|u|^2\Big)^{1/4}
\Big(\int_{M_\eta}|u_{xx}|^2\Big)^{1/4}\leq c\eta^{1/2}\mes(M_\eta)^{1/2}$$ and
$$\|\widetilde{u}_{xx}\|_{L^2}\leq \|\widetilde{u}\|_{L^2}^{1/2}\|\widetilde{u}_{xxxx}\|_{L^2}^{1/2}=\Big(\int_{M_\eta}|u|^2\Big)^{1/4}
\Big(\int_{M_\eta}|u_{xxxx}|^2\Big)^{1/4}\leq c\eta^{1/2}\mes(M_\eta)^{1/2}.$$
Gathering these two last estimates we infer $$\Big|\intr
u_{xx}u_x\sgn_\eta'u\Big|\leq c\mes(M_\eta)\rightarrow 0$$ and so
$\intr u_{xxx}\sgn u=0$. Finally $$\partial_t\|u(t)\|_{L^1}\leq
0,$$ which complete the proof of Proposition \ref{decayl1}.
\end{proof}

\begin{corollary}\label{cor-l2} Let $u_0\in L^1(\R)\cap L^2(\R)$ and $u$ be a solution of (\ref{eq}) satisfying
(\ref{as2}). Then,
\begin{equation}\label{utl2}\forall t>0,\quad \|u(t)\|_{L^2}\leq c(1+t)^{-1/2\alpha}.\end{equation}
\end{corollary}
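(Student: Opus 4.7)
The plan is to apply Schonbek's Fourier splitting method to the $L^2$ energy identity, using the $L^1$ bound from Lemma \ref{decayl1} to control the low frequency part.

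First I would derive the energy identity. Multiplying \eqref{eq} by $u$ and integrating over $\R$, the assumption \eqref{as2} guarantees enough regularity and decay to justify integration by parts. The dispersive term contributes $\intr u\, u_{xxx}\,dx = -\intr u_x u_{xx}\,dx = -\tfrac{1}{2}\intr (u_x^2)_x\,dx = 0$, and the nonlinear term gives $\intr u \cdot u u_x\,dx = \tfrac{1}{3}\intr (u^3)_x\,dx = 0$. By Plancherel, $\intr u\, |D|^\alpha u\,dx = (2\pi)^{-1}\intr |\xi|^\alpha |\widehat{u}(\xi)|^2\,d\xi$. Hence
\begin{equation*}
\frac{1}{2}\partial_t \|u(t)\|_{L^2}^2 + c\intr |\xi|^\alpha |\widehat{u}(t,\xi)|^2\,d\xi = 0.
\end{equation*}

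Next I would split the frequency domain. Set $B(t)=\{\xi\in\R : |\xi|^\alpha \leq g(t)\}$ where $g(t) = k/(1+t)$ for some constant $k>0$ to be chosen. On $B(t)^c$ one has $|\xi|^\alpha \geq g(t)$, while on $B(t)$ one uses $\|\widehat{u}(t)\|_{L^\infty} \leq \|u(t)\|_{L^1} \leq \|u_0\|_{L^1}$ by Lemma \ref{decayl1}, so that
\begin{equation*}
\int_{B(t)} |\widehat{u}(t,\xi)|^2\,d\xi \leq \|u_0\|_{L^1}^2\, \mes(B(t)) \leq c\, g(t)^{1/\alpha}.
\end{equation*}
Combining this with Plancherel's identity $\|u(t)\|_{L^2}^2 = (2\pi)^{-1}\intr |\widehat{u}|^2\,d\xi$ yields the differential inequality
\begin{equation*}
\partial_t \|u(t)\|_{L^2}^2 + \frac{2k}{1+t}\|u(t)\|_{L^2}^2 \leq c\,(1+t)^{-1-1/\alpha}.
\end{equation*}

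Finally I would solve this ODE with the integrating factor $(1+t)^{2k}$, choosing $k > 1/(2\alpha)$:
\begin{equation*}
\partial_t\bigl((1+t)^{2k}\|u(t)\|_{L^2}^2\bigr) \leq c\,(1+t)^{2k-1-1/\alpha}.
\end{equation*}
Integrating from $0$ to $t$ and dividing by $(1+t)^{2k}$ gives $\|u(t)\|_{L^2}^2 \leq c(1+t)^{-2k} + c(1+t)^{-1/\alpha} \leq c(1+t)^{-1/\alpha}$, which is \eqref{utl2}.

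The main obstacle is not really analytic but bookkeeping: one must be careful that the splitting parameter $k$ be large enough (so that $(1+t)^{-2k}$ decays faster than $(1+t)^{-1/\alpha}$) and that the constant in the resulting inequality does not blow up, i.e.\ that $2k - 1/\alpha$ remains bounded away from zero. The $L^1$ control provided by Lemma \ref{decayl1} is the crucial ingredient without which the low-frequency contribution could not be absorbed.
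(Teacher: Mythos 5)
Your proof is correct and follows essentially the same route as the paper: Schonbek's Fourier splitting applied to the $L^2$ energy identity, with the low-frequency ball controlled via $\|\widehat{u}(t)\|_{L^\infty}\leq\|u(t)\|_{L^1}\leq\|u_0\|_{L^1}$ from Lemma \ref{decayl1}. The only cosmetic difference is the parameterization: the paper multiplies by the weight $t^{2/\alpha}$ and cuts frequencies at $|\xi|=(\alpha t)^{-1/\alpha}$, while you use the integrating factor $(1+t)^{2k}$ with cutoff $|\xi|^\alpha\leq k/(1+t)$, $k>1/(2\alpha)$, which yields the same decay.
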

\begin{proof} If we multiply (\ref{eq}) by $u$ and then integrate the result over
$\R$,
$$\partial_t\|u(t)\|_{L^2}^2=-2\||D|^{\alpha/2}u\|_{L^2}^2\leq
0.$$ In particular, $\|u(t)\|_{L^2}\leq \|u_0\|_{L^2}$. For all
$t>0$, last equality allow us to write
\begin{align*}\partial_t\Big[t^{2/\alpha}\|u(t)\|_{L^2}^2\Big] &= \frac 2\alpha
t^{2/\alpha-1}\|u(t)\|_{L^2}^2+t^{2/\alpha}\partial_t\|u(t)\|_{L^2}^2\\ &= \frac 2\alpha
t^{2/\alpha-1}\|u(t)\|_{L^2}^2-2t^{2/\alpha}\intr|\xi|^\alpha|\hat{u}(t,\xi)|^2d\xi\\ &\leq \frac
2\alpha t^{2/\alpha-1}\intr|\hat{u} (t,\xi)|^2d\xi-2t^{2/\alpha}\int_{|\xi|>(\alpha t)^{-1/\alpha}}
|\xi|^\alpha|\hat{u}(t,\xi)|^2d\xi \\ &\leq \frac 2\alpha t^{2/\alpha-1}\intr|\hat{u}(t,\xi)|^2
d\xi-\frac 2\alpha t^{2/\alpha-1}\int_{|\xi|>(\alpha t)^{-1/\alpha}}|\hat{u}(t,\xi)|^2d\xi\\
&=\frac 2\alpha t^{2/\alpha-1}\int_{|\xi|<(\alpha t)^{-1/\alpha}}|\hat{u}(t,\xi)|^2d\xi\\
&\leq ct^{2/\alpha-1}\|u(t)\|_{L^1}^2\mes\{|\xi|<(\alpha
t)^{-1/\alpha}\}\\ &\leq ct^{1/\alpha-1}.
\end{align*}
The integration of this inequality over $[0,t]$ provides the
desired result.
\end{proof}

Now we show that if $\alpha\geq 1$, solutions of (\ref{eq})
satisfy the maximum principle. The restriction on $\alpha$ is
mainly due to the fact that one has $|D|^\alpha 1=0$ only if
$\alpha\geq 1$.

\begin{lemma}\label{lem-max} If $u$ is a solution to (\ref{eq}) with $\alpha\geq 1$ associated with
initial data $u_0\in L^\infty(\R)$, then
\begin{equation}\label{max}\inf u_0\leq u(t,x)\leq \sup u_0\end{equation} for a.e.
$(t,x)\in [0,\infty[\times\R$.
\end{lemma}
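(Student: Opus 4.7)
The plan is to adapt the $L^1$-type energy estimate of Lemma~\ref{decayl1}, applied this time to the ``excess'' $(u-M)_+$ with $M := \sup u_0$. The upper bound in (\ref{max}) will follow from the monotonicity $\partial_t \intr (u-M)_+\,dx \leq 0$; the lower bound follows symmetrically from the analogous argument on $(m-u)_+$ with $m := \inf u_0$, noting that at each step the Stroock--Varopoulos inequality can be invoked for either of the convex functions $s \mapsto (s-M)_+$ and $s \mapsto (m-s)_+$.

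Formally, multiplying (\ref{eq}) by $\mathbf{1}_{\{u>M\}}$ and integrating in $x$ yields
$$\partial_t \intr (u-M)_+\,dx = -\intr \bigl(u_{xxx} + |D|^\alpha u + uu_x\bigr)\,\mathbf{1}_{\{u>M\}}\,dx,$$
and I would treat each contribution on the right via the smooth regularization $\sgn_\eta$ already used in the proof of Lemma~\ref{decayl1}, passing to the limit $\eta\to 0$. The convective term $-\intr uu_x\mathbf{1}_{\{u>M\}}\,dx = -\intr \tfrac12\partial_x(u^2-M^2)\mathbf{1}_{\{u>M\}}\,dx$ vanishes after integration by parts since $u^2-M^2 \equiv 0$ on $\partial\{u>M\}$. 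The dissipative term $-\intr |D|^\alpha u\,\mathbf{1}_{\{u>M\}}\,dx$ is nonpositive in the limit $\eta\to 0$ by the same Kato-type inequality invoked in Lemma~\ref{decayl1}, applied to the convex truncation $s\mapsto (s-M)_+$; this step uses the identity $|D|^\alpha M = 0$, which is the origin of the restriction $\alpha\geq 1$.

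The main obstacle is the dispersive contribution $-\intr u_{xxx}\,\mathbf{1}_{\{u>M\}}\,dx$, which has no a priori sign. After regularization and two integrations by parts this reduces to $\intr u_{xx}u_x\,\sgn_\eta'(u-M)\,dx$. Since $\sgn_\eta'$ is supported in $\{|u-M|<\eta\pi/2\}$ and is bounded by $1/\eta$, I would replay the argument from the final paragraph of the proof of Lemma~\ref{decayl1}: introducing $\widetilde v$ equal to $u-M$ on $M_\eta := \{0<u-M<\eta\pi/2,\ u_x\neq 0\}$ and $0$ elsewhere, the interpolation inequalities (\ref{ele1}) combined with the uniform $H^\infty$ bound from (\ref{as2}) yield $\|\widetilde v_x\|_{L^2},\,\|\widetilde v_{xx}\|_{L^2}\leq c\,\eta^{1/2}\mes(M_\eta)^{1/2}$, whence
$$\Big|\intr u_{xx}u_x\,\sgn_\eta'(u-M)\,dx\Big|\leq \tfrac{1}{\eta}\|\widetilde v_{xx}\|_{L^2}\|\widetilde v_x\|_{L^2}\leq c\,\mes(M_\eta)\;\longrightarrow\;0.$$
Assembling the three contributions gives $\partial_t\intr (u-M)_+\,dx\leq 0$, and since $(u_0-M)_+\equiv 0$ by the definition of $M$, we conclude $u(t,\cdot)\leq M$ a.e.

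A secondary technical point is that for $u_0\in L^\infty\setminus L^1$ the functional $\intr(u-M)_+\,dx$ need not be finite; this is bypassed by first approximating $u_0$ by compactly supported data (for which $u(t,\cdot)\in L^1$ by Lemma~\ref{decayl1}) and then passing to the limit using continuous dependence on initial data. The only genuinely new ingredient beyond Lemma~\ref{decayl1} is thus the Kato-type inequality for the convex truncation $s\mapsto(s-M)_+$; once that is in hand, the dispersive and convective cancellations are handled exactly as there.
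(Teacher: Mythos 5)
Your proposal reaches the right conclusion but by a genuinely different route from the paper's. You run the $L^1$-entropy machinery of Lemma~\ref{decayl1} on the truncation $(u-M)_+$: regularized Heaviside, a Kato/Stroock--Varopoulos inequality for the convex function $s\mapsto(s-M)_+$, and the level-set argument for the Airy term. The paper instead multiplies (\ref{eq}) by $u^+=(u-M-\eps)_+$ itself and runs an $L^2$ (Stampacchia) energy estimate: tested against $u^+$ rather than against $\mathbf{1}_{\{u>M\}}$, the dispersive term is treated as an exact derivative, $\intr u^+_{xxx}u^+=-\tfrac12\intr\partial_x\bigl((u^+_x)^2\bigr)=0$, the dissipative term is a manifest square by Plancherel, $\intr |D|^\alpha u^+\,u^+=\||D|^{\alpha/2}u^+\|_{L^2}^2\geq 0$ (using $|D|^\alpha u=|D|^\alpha u^+$ on $\supp u^+$, which is where $\alpha\geq 1$ enters, exactly as in your version), and the convective term is $\intr(u^++M+\eps)u^+_xu^+=0$. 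One concludes $\|u^+(t)\|_{L^2}=0$ directly. So the paper needs neither the Kato-type inequality nor the $\sgn_\eta$ limit, and its functional is finite without approximating the data; those simplifications are what your route gives up in exchange for staying parallel to Lemma~\ref{decayl1}.

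Two points in your write-up deserve attention. First, the inequality $-\intr|D|^\alpha u\,\mathbf{1}_{\{u>M\}}\leq 0$ is asserted rather than proved; it does follow by adapting the semigroup argument of Lemma~\ref{decayl1} (positivity and unit mass of the kernel $G_\alpha(s,\cdot)$ give $\intr (e^{-s|D|^\alpha}v)\,\mathbf{1}_{\{v>0\}}\leq\intr v_+$ with $v=u-M$, after which one divides by $s$ and lets $s\to0$), but since you identify it as the one new ingredient you should supply this step. Second, your closing approximation does not cover the case $\sup u_0<0$ (resp.\ $\inf u_0>0$): there $(u-M)_+$ is not integrable, and compactly supported approximants necessarily have nonnegative supremum, so the limit only yields $u\leq 0$ rather than $u\leq\sup u_0$. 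This is harmless for the way the lemma is used in Section~\ref{sec-bound} (only $\|u(t)\|_{L^\infty}\leq\|u_0\|_{L^\infty}$ is needed), but it is a gap relative to the statement as written. The remaining steps --- in particular the rewriting $uu_x=\tfrac12\partial_x(u^2-M^2)$, which is genuinely needed since $u^2$ is not small on $\{|u-M|<\eta\pi/2\}$, and the treatment of $\intr u_{xx}u_x\sgn_\eta'(u-M)$ via $M_\eta$ --- are at exactly the same level of rigor as the proof of Lemma~\ref{decayl1} that you import, so they stand or fall with that argument.
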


\begin{proof} Let $m=\inf u_0$, $M=\sup u_0$ and
$u^+=\max(0,u-M-\eps)$, $u^-=\min(0,u+m+\eps)$ for some $\eps>0$.
We multiply (\ref{eq}) by $u^+$ and integrate over $\R$ to get
\begin{equation}\label{multu+}\intr (u_t+u_{xxx}+|D|^\alpha u+uu_x)u^+=0.
\end{equation} On the support
of $u^+$, it is clear that $u_t=u_t^+$, $u_{x}=u_{x}^+$ and
$|D|^\alpha u=|D|^\alpha u^+$, this last equality follows from the
relation $|D|^\alpha 1=0$ for $\alpha\geq 1$. We deduce $\intr
u_tu^+=\frac 12\partial_t\|u^+(t)\|_{L^2}^2$, $\intr
u_{xxx}u^+=\intr u_{xxx}^+u^+=0$ and $\intr |D|^\alpha uu^+=\intr
|D|^\alpha u^+u^+=\||D|^{\alpha/2}u^+\|_{L^2}^2$ by Plancherel. On
the other hand, one has $\intr
uu_xu^+=\intr(u^++M+\eps)u_x^+u^+=0$. Inserting this into
(\ref{multu+}) and integrating over $[0,t]$ we get
$$\|u^+(t)\|_{L^2}^2+2\int_0^t\||D|^{\alpha/2}u^+(s)\|_{L^2}^2ds=\|u^+(0)\|_{L^2}^2=0$$
and thus $u^+(t)=0$ a.e.. Consequently, we have $u(t)\leq M+\eps$
for all $\eps>0$, and the second part of (\ref{max}) is proved.
The same arguments hold with $u^+$ replaced by $u^-$ and give the
first inequality.
\end{proof}

Following \cite{MR1643236}, we introduce for $\lambda>1$ the
following rescaled solution
$$u_\lambda(t,x)=\lambda u(\lambda^2t,\lambda x).$$ Obviously,
$u_\lambda$ satisfies the equation
$$\partial_tu_\lambda+\lambda^{-1}\partial_{xxx}u_\lambda+\lambda^{2-\alpha}|D|^\alpha
u_\lambda+u_\lambda\partial_x u_\lambda=0$$ with initial data $u_{0,\lambda}(x)=\lambda u_0(\lambda x)$.

\begin{lemma}\label{uniful}Let $u_0\in L^1(\R)\cap L^\infty(\R)$ and $u$ be a
solution of (\ref{eq}) with $1<\alpha<2$ satisfying (\ref{as2}).
For $j\geq 0$, $T>0$ and $0<t<T$, there exists $c=c(t,T)$ such
that for all $\lambda>1$, one has
$\|\partial_x^ju_\lambda(t)\|_{L^2}\leq c$.
\end{lemma}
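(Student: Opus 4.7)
The plan is to argue by induction on $j$.

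For the base case $j = 0$, combine the scaling identity $\|u_\lambda(t)\|_{L^2}^2 = \lambda\|u(\lambda^2 t)\|_{L^2}^2$ with the decay estimate $\|u(s)\|_{L^2}^2 \leq c(1+s)^{-1/\alpha}$ of Corollary \ref{cor-l2} to obtain $\|u_\lambda(t)\|_{L^2}^2 \leq c\lambda(1+\lambda^2 t)^{-1/\alpha}$. A two-case analysis separates $\lambda^2 t \leq 1$ (where $\lambda \leq t^{-1/2}$ and the factor $(1+\lambda^2 t)^{-1/\alpha}$ is harmless) from $\lambda^2 t \geq 1$ (where $\lambda(1+\lambda^2 t)^{-1/\alpha} \leq \lambda^{1-2/\alpha} t^{-1/\alpha}$, with $1 - 2/\alpha < 0$ since $\alpha < 2$). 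In each case the result is controlled by a constant depending only on $t$, uniformly in $\lambda > 1$.

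For the induction step, assume $\|\partial_x^k u_\lambda(s)\|_{L^2} \leq c_k(s, T)$ for $0 \leq k \leq j-1$, uniformly in $\lambda > 1$. Write the rescaled solution via Duhamel starting at $t/2$,
\[
u_\lambda(t) = S_\lambda(t/2)\ast u_\lambda(t/2) - \tfrac12\int_{t/2}^t S_\lambda(t-s)\ast\partial_x u_\lambda^2(s)\,ds,
\]
where $S_\lambda(\tau)$ has Fourier symbol $\exp(\tau(i\lambda^{-1}\xi^3 - \lambda^{2-\alpha}|\xi|^\alpha))$ and, by an analog of Lemma \ref{lem-estlin}, satisfies $\|\partial_x^k S_\lambda(\tau)f\|_{L^2} \leq c(\lambda^{2-\alpha}\tau)^{-k/\alpha}\|f\|_{L^2}$. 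These smoothing estimates are $\lambda$-uniform because $\lambda^{2-\alpha} \geq 1$ when $\lambda > 1$ and $\alpha < 2$. The linear term is then bounded by $c(t/2)^{-j/\alpha}\|u_\lambda(t/2)\|_{L^2}$, controlled by the base case. In the nonlinear part, I would shift one derivative inside the convolution, expand $\partial_x^j u_\lambda^2$ by Leibniz, and estimate each factor $\partial_x^k u_\lambda\,\partial_x^{j-k}u_\lambda$ with H\"older and Sobolev embedding, invoking the inductive hypothesis. The resulting Volterra kernel behaves like $(t-s)^{-c/\alpha}$ with $c/\alpha < 1$, and is integrable near $s = t$ precisely because $\alpha > 1$.

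The main obstacle is the passage from $j = 0$ to $j = 1$, where an $L^\infty$-bound on $u_\lambda$ is needed but Sobolev embedding supplies it only through the very $H^1$-norm we seek to control; the maximum principle (Lemma \ref{lem-max}) is useless here since $\|u_\lambda\|_{L^\infty}$ scales like $\lambda$. I would close the circle by a bootstrap: set $N(t) = \sup_{t/2 \leq s \leq t}\|\partial_x u_\lambda(s)\|_{L^2}$, derive an inequality $N(t) \leq A(t) + B(t) N(t)^\gamma$ with $\gamma > 1$ on short intervals, and use the continuity of $N$ inherited from (\ref{as2}) to propagate a uniform bound. The condition $\alpha > 1$ enters not only through the integrability of the kernel but also through the fractional Gagliardo--Nirenberg inequality $\|\partial_x u_\lambda\|_{L^\infty} \leq c\|\partial_x u_\lambda\|_{L^2}^{1-1/\alpha}\||D|^{\alpha/2}\partial_x u_\lambda\|_{L^2}^{1/\alpha}$, whose top-order factor can be absorbed into $\lambda^{2-\alpha}\||D|^{\alpha/2}\partial_x u_\lambda\|_{L^2}^2$ via Young's inequality. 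Once $j = 1$ is in hand, higher $j$ follow by the same Duhamel machinery, since Sobolev then controls all needed lower-order $L^\infty$-norms via the inductive hypothesis.
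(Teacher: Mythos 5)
Your base case and the overall Duhamel--induction skeleton match the paper, and you have correctly located the crux: the $k=0$ Leibniz term $u_\lambda\,\partial_x^j u_\lambda$ cannot be handled by Sobolev embedding without circularity. But your proposed resolution of that crux does not close. A superlinear inequality $N(t)\le A(t)+B(t)N(t)^\gamma$ with $\gamma>1$ only yields a bound if $N$ can be placed below the smaller root of $x=A+Bx^\gamma$, and there is no $\lambda$-uniform way to do this: the quantity $\|\partial_x u_\lambda(t/2)\|_{L^2}=\lambda^{3/2}\|\partial_x u(\lambda^2 t/2)\|_{L^2}$ is exactly what the lemma is trying to control, so the bootstrap has no uniform starting point, and continuity of $N$ in $s$ for each fixed $\lambda$ (which is all that (\ref{as2}) gives) does not supply uniformity in $\lambda$. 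The same objection applies to the energy-method variant with the fractional Gagliardo--Nirenberg inequality and absorption into $\lambda^{2-\alpha}\||D|^{\alpha/2}\partial_x u_\lambda\|_{L^2}^2$: after Young's inequality one is left with a superlinear ODE inequality for $\|\partial_x u_\lambda\|_{L^2}^2$ whose initial value at $s=t/2$ is not uniformly bounded in $\lambda$.

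The paper's fix is different and is precisely the point you dismissed. You write that the maximum principle is ``useless here since $\|u_\lambda\|_{L^\infty}$ scales like $\lambda$'', but the paper does not apply it to $u_\lambda$ in $L^\infty$: it interpolates the $L^\infty$ bound on $u$ (from Lemma \ref{lem-max}) with the $L^2$ decay (Corollary \ref{cor-l2}) to get $\|u(t)\|_{L^p}\le ct^{-1/\alpha p}$, hence $\|u_\lambda(t)\|_{L^p}\le c\lambda^{1-(1+2/\alpha)/p}t^{-1/\alpha p}$, and then estimates the $k=0$ term by
$\|\partial_x G_\alpha^\lambda(t-s)\|_{L^{2/(3-\alpha)}}\,\|u_\lambda(s+t')\|_{L^{2/(\alpha-1)}}\,\|\partial_x^j u_\lambda(s+t')\|_{L^2}$.
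The exponent $p=2/(\alpha-1)$ (admissible only because $\alpha>1$) is chosen so that the power of $\lambda$ from $u_\lambda$ in $L^{2/(\alpha-1)}$ exactly cancels the power $\lambda^{-(2-\alpha)(\alpha+1)/2\alpha}$ coming from the kernel. This keeps the resulting Volterra inequality \emph{linear} in $\|\partial_x^j u_\lambda\|_{L^2}$, with an integrable singular kernel $(t-s)^{-(\alpha+1)/2\alpha}$, so the generalized Gronwall lemma applies with no smallness or continuity argument needed. Without this (or an equivalent device that keeps the inequality linear), your induction step does not go through.
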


\begin{proof} The method of proof is based on an induction on $j$. If $j=0$,
one easily deduce from Corollary \ref{cor-l2} and Lemma
\ref{lem-max} that $\|u(t)\|_{L^p}\leq ct^{-1/\alpha p}$ for
$2\leq p\leq \infty$ and thus
\begin{equation}\label{lpul}\|u_\lambda(t)\|_{L^p}\leq
c\lambda^{1-(1+2/\alpha)/p}t^{-1/\alpha p}.\end{equation} In
particular for $p=2$ and $\lambda>1$, $\|u_\lambda(t)\|_{L^2}\leq
c(t)$. Suppose now that the result is true for all $k<j$. Consider
$S_\alpha^\lambda(t)$ (resp. $G_\alpha^\lambda(t)$), the semigroup
generated by $\lambda^{-1}\partial_{xxx}+\lambda^{2-\alpha}
|D|^\alpha$ (resp. $\lambda^{2-\alpha} |D|^\alpha$) so that we
have for $0<t,t'<T$
\begin{equation}\label{intul}u_\lambda(t+t')=S_\alpha^\lambda(t)\ast
u_{\lambda}(t')-\frac
12\int_0^tS_\alpha^\lambda(t-s)\ast\partial_xu_\lambda^2(s+t')ds.\end{equation}
It is worth noticing that $\|S_\alpha^\lambda(t)\ast
f\|_{L^2}=\|G_\alpha^\lambda(t)\ast f\|_{L^2}$ and
\begin{equation}\label{gal}\|\partial_x^jG_\alpha^\lambda(t)\|_{L^p}=\|\partial_x^jG_\alpha(\lambda^{2-\alpha}t)\|_{L^p}
\end{equation}
for all $1\leq p\leq\infty$.
 Application of $\partial_x^j$ to
(\ref{intul}) and computing the $L^2$-norm lead to
\begin{multline}\label{hjul}\|\partial_x^ju_\lambda(t+t')\|_{L^2}\leq
\|\partial_x^jG_\alpha^\lambda(t)\ast
u_\lambda(t')\|_{L^2}\\+c\sum_{k=0}^j\int_0^t\|\partial_xG_\alpha^\lambda(t-s)\ast
\partial_x^ku_\lambda(s+t')
\partial_x^{j-k}u_\lambda(s+t')\|_{L^2}ds.\end{multline}
By the inductive hypothesis, the first term in the right-hand side
of (\ref{hjul}) is bounded by
$$\|\partial_xG_\alpha^\lambda(t)\ast\partial_x^{j-1}u_\lambda(t')\|_{L^2}\leq
ct^{-1/\alpha}\|\partial_x^{j-1}u_\lambda(t')\|_{L^2}\leq
c(t')t^{-1/\alpha}.$$ By symmetry, it is sufficient in the sum
$\sum_{k=0}^j$ in (\ref{hjul}) to consider the indexes
$k=0,...E(j/2)$. The case $k=0$ is a special case and has to be
treated separately. Using Young and H\"{o}lder inequalities and
next estimates (\ref{gal}) and (\ref{lpul}), we obtain
\begin{align}
\notag
&\|\partial_xG_\alpha^\lambda(t-s)\ast u_\lambda(s+t')\partial_x^ju_\lambda(s+t')\|_{L^2}\\ \notag &\quad\leq
\|\partial_xG_\alpha^\lambda(t-s)\|_{L^{2/(3-\alpha)}}
\|u_\lambda(s+t')\|_{L^{2/(\alpha-1)}}\|\partial_x^ju_\lambda(s+t')\|_{L^2}\\ &\notag\quad\leq [\lambda^{2-\alpha}(t-s)]^{-(\alpha+1)/2\alpha}
\lambda^{1-(\alpha-1)(1+2/\alpha)/2}(s+t')^{(1-\alpha)/2\alpha}\|\partial_x^ju_\lambda(s+t')\|_{L^2}\\ \label{intul1}
&\quad\leq c(s+t')(t-s)^{-(\alpha+1)/2\alpha}\|\partial_x^ju_\lambda(s+t')\|_{L^2}
\end{align}
since $-(2-\alpha)(\alpha+1)/2\alpha+1-(\alpha-1)(1+2/\alpha)/2=
0$. When $k\geq 1$, we use the inductive hypothesis combined with
(\ref{ele1}) to get
\begin{align}
\notag&\|\partial_xG_\alpha^\lambda(t-s)\ast
\partial_x^ku_\lambda(s+t')\partial_x^{j-k}u_\lambda(s+t')\|_{L^2}\\ \notag &\quad \leq
\|\partial_xG_\alpha^\lambda(t-s)\|_{L^1}\|\partial_x^ku_\lambda(s+t')\|_{L^\infty}\|\partial_x^{j-k}u_\lambda(s+t')\|_{L^2}\\
\label{intul2}&\quad\leq c(s+t')(t-s)^{-1/\alpha}.
\end{align}
Bounding $c(s+t')$ in (\ref{intul1})-(\ref{intul2}) by
$\sup_{0\leq s\leq T}c(s+t')$ and inserting these inequalities
into (\ref{hjul}) let us conclude that
\begin{multline*}\|\partial_x^ju_\lambda(t+t')\|_{L^2} \leq
c(t')t^{-1/\alpha}+c(t',T)\\+
c(t')\int_0^t(t-s)^{-(\alpha+1)/2\alpha}\|\partial_x^ju_\lambda(s+t')\|_{L^2}ds.
\end{multline*}
This implies by the generalized Gronwall lemma \cite{MR1274542}
that for $t'=t$, $$\|\partial_x^ju_\lambda(2t)\|_{L^2}\leq
c(t,T)$$ where $c(t,T)$ is independent of $\lambda>1$.
\end{proof}
As noticed in Section \ref{sec-main}, these uniform estimates (in
$\lambda$) imply uniform estimates in time of the solution.
\begin{corollary} Let $u_0\in L^1(\R)\cap L^\infty(\R)\cap
H^j(\R)$ for some $j\geq 0$. Assume that $u$ is a solution of
(\ref{eq}) with $1<\alpha<2$ satisfying (\ref{as2}). Then
assumption (\ref{as3}) is satisfied, i.e. $$\sup_{t\geq
0}\|\partial_x^ju(t)\|_{L^2}<\infty.$$
\end{corollary}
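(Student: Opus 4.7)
The plan is to derive the uniform-in-time estimate on $\|\partial_x^j u(t)\|_{L^2}$ by feeding the uniform-in-$\lambda$ bound of Lemma \ref{uniful} into the self-similar scaling $u_\lambda(t,x)=\lambda u(\lambda^2 t,\lambda x)$. A change of variables $y=\lambda x$ produces the identity
$$\|\partial_x^j u_\lambda(t)\|_{L^2}^2=\lambda^{2j+1}\|\partial_x^j u(\lambda^2 t)\|_{L^2}^2,$$
which is the bridge between the two quantities and which will convert a bound independent of $\lambda$ at a fixed rescaled time into a bound on the original solution at all sufficiently large times.

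Concretely, I would first fix two numbers $0<t_0<T$ (for definiteness $t_0=1$, $T=2$) and apply Lemma \ref{uniful} at $t=t_0$. This furnishes a constant $c=c(j,t_0,T)$, independent of $\lambda$, such that $\|\partial_x^j u_\lambda(t_0)\|_{L^2}\leq c$ for every $\lambda>1$. Inserting this into the scaling identity and setting $\tau=\lambda^2 t_0$, which sweeps through $(t_0,\infty)$ as $\lambda>1$ varies, I obtain
$$\|\partial_x^j u(\tau)\|_{L^2}\leq c\,(\tau/t_0)^{-(2j+1)/4}\leq c\qquad\text{for every }\tau>t_0,$$
so the $\dot H^j$-norm is uniformly bounded (indeed polynomially decaying) on $[t_0,\infty)$.

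It remains to handle the compact interval $[0,t_0]$, which is not reached by the rescaling argument because the constraint $\lambda>1$ forces $\tau>t_0$. Here I would invoke the well-posedness fact recalled right after hypothesis (\ref{as3}): when $u_0\in H^j(\R)$, the solution of (\ref{eq}) belongs to $C([0,\infty);H^j(\R))$, hence $t\mapsto\|\partial_x^j u(t)\|_{L^2}$ is continuous on the compact set $[0,t_0]$ and therefore bounded there. Combining the two regimes yields $\sup_{t\geq 0}\|\partial_x^j u(t)\|_{L^2}<\infty$, which is exactly (\ref{as3}).

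I do not expect a substantive obstacle: the corollary is essentially a rewriting of Lemma \ref{uniful} in the original variables, the only mild point being the need to patch the rescaled large-time estimate with the short-time continuity estimate, a split forced by the restriction $\lambda>1$ in the lemma.
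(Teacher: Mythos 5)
Your proof is correct and is essentially the paper's own argument: the same scaling identity $\|\partial_x^j u_\lambda(t)\|_{L^2}=\lambda^{j+1/2}\|\partial_x^j u(\lambda^2 t)\|_{L^2}$ combined with Lemma \ref{uniful} at a fixed rescaled time to cover $t\geq t_0$, and continuity of $t\mapsto u(t)$ in $H^j(\R)$ to cover the compact initial interval. The only cosmetic difference is that the paper takes $\lambda=t^{1/2}$ directly rather than parametrizing by $\tau=\lambda^2 t_0$; the exponent $-(2j+1)/4$ you obtain matches the paper's $t^{-j/2-1/4}$ decay.
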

\begin{proof} First since $u_0\in H^j(\R)$, we have $u\in
C([0,\infty[ ; H^j(\R))$ and thus $\sup_{0\leq t\leq
1}\|\partial_x^ju(t)\|_{L^2}<\infty$. On the other hand, one
easily verifies that
$\|\partial_x^ju_\lambda(t)\|_{L^2}=\lambda^{j+1/2}\|\partial_x^ju(\lambda^2t)\|_{L^2}$.
Taking $t=1$ and $\lambda=t^{1/2}$ in this equality we deduce
$$t^{j/2+1/4}\|\partial_x^ju(t)\|_{L^2}=\|\partial_x^ju_\lambda(1)\|_{L^2}\leq
c$$ by Lemma \ref{uniful}. This implies for $t\geq 1$ that
$\|\partial_x^ju(t)\|_{L^2}\leq c$ as desired.
\end{proof}

\section{Decay of solutions to (\ref{eq})}\label{sec-decay}
In this section we prove Theorem \ref{th-decayrate} which has
already been shown in the special cases $(p,j)=(1,0)$ and
$(p,j)=(2,0)$ in the previous section.
\begin{lemma}\label{lemti}Let $u_0\in H^{j}(\R)\cap L^1(\R)$ and $u$ be a solution satisfying
(\ref{as2})-(\ref{as3}). Then, for all $t>1$ and $N\geq 1$,
\begin{multline}\label{estti}\Big\|\int_0^tS_\alpha(t-s)\ast\partial_xu^2(s)ds\Big\|_{\dot{H}^j}\\ \leq ct^{-1/2\alpha-j/\alpha}+
ct^{-\gamma}\sup_{t/2\leq s\leq
t}\|\partial_x^ju(s)\|_{L^2}^{1-1/N}+t^{-\gamma}\sup_{t/2\leq
s\leq t}\|\partial_x^ju(s)\|_{L^2}\end{multline} with
$\gamma=\gamma(\alpha)>0$.
\end{lemma}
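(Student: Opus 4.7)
The plan is to split the Duhamel integral as $\int_0^t=\int_0^{t/2}+\int_{t/2}^t$, since the two regions behave quite differently: on $[0,t/2]$ the variable $t-s$ is bounded below by $t/2$ and the kernel decay is strong, while on $[t/2,t]$ the kernel may be singular at $s=t$ and derivatives must be distributed carefully between $S_\alpha$ and $u^2$.

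For $\int_0^{t/2}$, I would push all derivatives onto the kernel, rewriting $\partial_x^j[S_\alpha(t-s)\ast\partial_x u^2(s)]=\partial_x^{j+1}S_\alpha(t-s)\ast u^2(s)$ and applying Young's inequality $L^2\ast L^1\to L^2$. Lemma \ref{lem-estlin} provides $\|\partial_x^{j+1}S_\alpha(\tau)\|_{L^2}\leq c\tau^{-1/(2\alpha)-(j+1)/\alpha}$, and Corollary \ref{cor-l2} gives $\|u^2(s)\|_{L^1}=\|u(s)\|_{L^2}^2\leq c(1+s)^{-1/\alpha}$. Since $t-s\geq t/2$ on this subinterval, the kernel factor is bounded by $ct^{-1/(2\alpha)-(j+1)/\alpha}$, and the remaining integral $\int_0^{t/2}(1+s)^{-1/\alpha}\,ds$ is $O(1)$, $O(\log t)$ or $O(t^{1-1/\alpha})$ according to whether $\alpha<1$, $=1$ or $>1$. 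A case-by-case check using $\alpha<2$ shows the total contribution is bounded by $ct^{-1/(2\alpha)-j/\alpha}$, yielding the first term on the right-hand side of the lemma.

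For $\int_{t/2}^t$, I would put fewer derivatives on the kernel, writing the integrand as $\partial_x^j S_\alpha(t-s)\ast\partial_x u^2(s)$ and applying Young $L^2\ast L^1\to L^2$. The bound $\|\partial_x^j S_\alpha(\tau)\|_{L^2}\leq c\tau^{-1/(2\alpha)-j/\alpha}$ is preferable to the $L^1$-version of Lemma \ref{lem-estlin}, whose singularity at $\tau=0$ is not integrable for $\alpha<2$. To control $\|\partial_x u^2(s)\|_{L^1}\leq 2\|u(s)\|_{L^2}\|u_x(s)\|_{L^2}$ I would interpolate via (\ref{ele1}) with a free parameter, $\|u_x\|_{L^2}\leq\|u\|_{L^2}^{1-1/N}\|\partial_x^N u\|_{L^2}^{1/N}$, which is legitimate since $u(s)\in H^\infty$ for $s>0$ by (\ref{as2}). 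The two terms on the right-hand side of the lemma reflect two complementary interpolations: the sub-linear factor $\|\partial_x^j u\|_{L^2}^{1-1/N}$ arises from chaining (\ref{ele1}) with parameter $N$ against $\|\partial_x^j u\|_{L^2}$, while the linear factor $\|\partial_x^j u\|_{L^2}$ comes from the Moser-type estimate $\|\partial_x^j u^2\|_{L^2}\leq c\|u\|_{L^\infty}\|\partial_x^j u\|_{L^2}$ applied in an alternative derivative distribution. The decay $\|u(s)\|_{L^2}\leq c(1+t)^{-1/(2\alpha)}$ on $[t/2,t]$, combined with the time integration of the kernel, produces the positive exponent $\gamma=\gamma(\alpha)>0$.

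The main obstacle will be the interplay between the kernel singularity at $\tau=t-s=0$ and the Gagliardo--Nirenberg exponents: since $\|\partial_x^m S_\alpha(\tau)\|_{L^1}$ has a non-integrable singularity near $\tau=0$ for $\alpha<2$, one is forced to use the $L^2$-kernel estimate (via Plancherel) and to balance the Young exponents with care, depending on $j$ and $\alpha$. A further subtle point is to ensure that the resulting dependence on $\|\partial_x^j u\|_{L^2}$ is genuinely split into one sub-linear and one linear power, since a single power strictly exceeding $1$ would block the Gronwall-type bootstrap used downstream to derive Theorem \ref{th-decayrate}.
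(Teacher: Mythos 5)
Your decomposition $\int_0^t=\int_0^{t/2}+\int_{t/2}^t$ and your treatment of the first piece (all $j+1$ derivatives on the kernel, Young $L^2\ast L^1\to L^2$, the case discussion on $\int_0^{t/2}(1+s)^{-1/\alpha}ds$) coincide with the paper. The gap is in the second piece. You propose to estimate $\int_{t/2}^t$ by
$\|\partial_x^jS_\alpha(t-s)\|_{L^2}\,\|\partial_xu^2(s)\|_{L^1}$, but the kernel factor is $c(t-s)^{-1/2\alpha-j/\alpha}$, and the exponent $1/2\alpha+j/\alpha$ is $\geq 1$ for every $j\geq 2$ (and already for $j=1$ when $\alpha\leq 3/2$, and for $j=0$ when $\alpha\leq 1/2$), so the singularity at $s=t$ is not integrable. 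Switching from the $L^1$ to the $L^2$ kernel norm, as you suggest, does not remove this obstruction: any distribution that leaves $j$ derivatives on $S_\alpha(t-s)$ fails. The paper's resolution, which you do not supply, is to apply Plancherel and split frequencies: for $|\xi|>1$ one uses $e^{-2(t-s)|\xi|^\alpha}\leq e^{-2(t-s)}$, so the kernel contributes only $e^{-(t-s)}$ and all $j+1$ derivatives land on $u^2$ in $L^2$; for $|\xi|<1$ one uses $e^{-2(t-s)|\xi|^\alpha}\leq e^{2}e^{-2(1+t-s)|\xi|^\alpha}$, i.e.\ one replaces $S_\alpha(t-s)$ by $S_\alpha(1+t-s)$, whose norms are uniformly bounded, with one derivative on the kernel and $j$ on $u^2$ in $L^1$.

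A second missing ingredient is the induction on $j$. Once $j$ or $j+1$ derivatives are placed on $u^2$, the Leibniz expansion produces the intermediate products $\|\partial_x^ku\|_{L^2}\|\partial_x^{j-k}u\|_{L^2}$ (resp.\ $\|\partial_x^ku\,\partial_x^{j+1-k}u\|_{L^2}$) for $1\leq k\leq j-1$; these are controlled only by invoking Corollary \ref{estuHj} for the lower indices $k<j$, i.e.\ by running the lemma and its corollary together as an induction on $j$. Without this, the right-hand side cannot be reduced to the stated form involving only $\sup_{t/2\leq s\leq t}\|\partial_x^ju(s)\|_{L^2}$ to the powers $1$ and $1-1/N$; your sketch, which keeps only one derivative on $u^2$, never produces these intermediate terms and so never confronts (or resolves) this issue. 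Finally, note that the paper's base case $j=0$ is not obtained by estimating the integral at all: it uses the Duhamel identity $\int_0^tS_\alpha(t-s)\ast\partial_xu^2\,ds=2(S_\alpha(t)\ast u_0-u(t))$ together with Corollary \ref{cor-l2}, precisely because the direct estimate fails for small $\alpha$. Your interpolation idea for producing the sublinear power $\|\partial_x^ju\|_{L^2}^{1-1/N}$ (via $\|\partial_x^{j+1}u\|_{L^2}\leq\|\partial_x^ju\|_{L^2}^{1-1/N}\|\partial_x^{j+N}u\|_{L^2}^{1/N}$ and assumption (\ref{as3})) is the right one and matches the paper's $k=0$ term in the high-frequency part, but as it stands the proposal does not close.
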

\begin{corollary}\label{estuHj}If  $u_0\in H^{j}(\R)\cap L^1(\R)$ and if (\ref{as2})-(\ref{as3}) hold true,
$$\|u(t)\|_{\dot{H}^j}\leq c(1+t)^{-1/2\alpha-j/\alpha}$$ for any
$t>0$.

\end{corollary}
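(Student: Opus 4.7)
The plan is to combine the Duhamel representation \eqref{duhamel} with Lemma \ref{lemti}, and then improve the resulting bound by a bootstrap iteration. For $0 < t \leq 1$ the assertion is immediate from assumption \eqref{as3}, since then $(1+t)^{-1/(2\alpha)-j/\alpha}$ is bounded below. So fix $t > 1$ and split
$$\|u(t)\|_{\dot H^j} \leq \|S_\alpha(t) \ast u_0\|_{\dot H^j} + \tfrac{1}{2}\Big\|\int_0^t S_\alpha(t-s) \ast \partial_x u^2(s)\,ds\Big\|_{\dot H^j}.$$
The linear contribution is handled by Young's inequality together with Lemma \ref{lem-estlin} at $p=2$:
$$\|S_\alpha(t) \ast u_0\|_{\dot H^j} \leq \|\partial_x^j S_\alpha(t)\|_{L^2}\|u_0\|_{L^1} \leq c\,t^{-1/(2\alpha)-j/\alpha}.$$
Setting $\beta = 1/(2\alpha) + j/\alpha$ and $\phi(t) = \sup_{t/2 \leq s \leq t}\|\partial_x^j u(s)\|_{L^2}$, Lemma \ref{lemti} then delivers, for every integer $N \geq 1$,
$$\|u(t)\|_{\dot H^j} \leq c\,t^{-\beta} + c\,t^{-\gamma}\phi(t)^{1-1/N} + c\,t^{-\gamma}\phi(t). \qquad (\ast)$$

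The bootstrap proceeds as follows. Assumption \eqref{as3} yields the starting bound $\phi(t) \leq c$, corresponding to an initial decay rate $\rho_0 = 0$. Suppose inductively that $\|u(t)\|_{\dot H^j} \leq c(1+t)^{-\rho_k}$ for some $\rho_k \in [0, \beta]$; then $\phi(t) \leq c(1+t)^{-\rho_k}$ as well, and feeding this into $(\ast)$ (handling $t \leq 1$ by \eqref{as3}) gives
$$\|u(t)\|_{\dot H^j} \leq c(1+t)^{-\rho_{k+1}}, \qquad \rho_{k+1} := \min\bigl(\beta,\ \gamma + \rho_k(1 - 1/N)\bigr).$$

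Finally, I choose $N$ large enough that $N\gamma > \beta$. The affine map $x \mapsto \gamma + x(1 - 1/N)$ has slope in $(0,1)$ and unique fixed point $N\gamma$, so the sequence $(\rho_k)$ is strictly increasing as long as it stays below $N\gamma$; since $N\gamma > \beta$, it exceeds $\beta$ after finitely many iterations, at which point the truncation in the definition of $\rho_{k+1}$ locks the bound at the desired exponent $\beta$. The main obstacle is that Lemma \ref{lemti} by itself produces only the exponent $\gamma = \gamma(\alpha)$, which may be strictly smaller than $\beta$, while the right-hand side of $(\ast)$ is self-referential through $\phi(t)$; the freedom to pick $N$ arbitrarily large, which exploits the interpolation-type factor $1 - 1/N$ in Lemma \ref{lemti}, is precisely what makes each iteration improve the decay strictly and ensures that the bootstrap terminates in finitely many steps.
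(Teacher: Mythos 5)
Your proposal is correct and follows essentially the same route as the paper: Duhamel plus Lemma \ref{lemti}, then a finite bootstrap that exploits the $1-1/N$ exponent to raise the decay rate from $0$ up to $\beta=1/(2\alpha)+j/\alpha$, choosing $N$ with $N\gamma>\beta$. The paper packages this iteration as a separate elementary lemma (Lemma \ref{lemel}), after first noting that $\|\partial_x^j u(t)\|_{L^2}\le 1$ for large $t$ so that the linear term in $\phi(t)$ can be absorbed into the $\phi(t)^{1-1/N}$ term; your inline recursion $\rho_{k+1}=\min(\beta,\gamma+\rho_k(1-1/N))$ handles both terms equivalently.
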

\begin{proof}[Proof of Lemma \ref{lemti}]One proceeds by induction on $j$. For $j=0$ we use the integral formulation
(\ref{duhamel}) and estimates (\ref{estSLp}) and (\ref{utl2}) :
\begin{multline*}\Big\|\int_0^tS_\alpha(t-s)\ast\partial_x u^2(s)ds\Big\|_{L^2} =2\|u(t)-S_\alpha(t)\ast u_0\|_{L^2}\\
\leq 2\|u(t)\|_{L^2}+2\|S_\alpha(t)\|_{L^2}\|u_0\|_{L^1}\leq ct^{-1/2\alpha}.\end{multline*}
Now assume the statement (and thus Corollary \ref{estuHj}) is true for the $k<j$. We split the left-hand side of (\ref{estti}) into
$$\Big\|\int_0^tS_\alpha(t-s)\ast\partial_xu^2(s)ds\Big\|_{\dot{H}^j}\leq \int_0^{t/2}\ldots ds+\int_{t/2}^t\ldots ds :=I+II.$$
By the Young inequality and estimates (\ref{estSLp}), (\ref{utl2}), we have
\begin{align*}I &\leq \int_0^{t/2}\|\partial_x^{j+1}S_\alpha(t-s)\|_{L^2}\|u(s)\|_{L^2}^2ds\\
&\leq c\int_0^{t/2}(t-s)^{-1/2\alpha-(j+1)/\alpha}(1+s)^{-1/\alpha}ds\\ &\leq ct^{-1/2\alpha-j/\alpha}\Big(t^{-1/\alpha}\int_0^t(1+s)^{-1/\alpha}ds\Big)
\end{align*}
and for $t>1$,$$t^{-1/\alpha}\int_0^t(1+s)^{-1/\alpha}ds\leq
c\left\{\begin{array}{lll}t^{-1/\alpha} &\rm{ if } &\alpha<1\\
t^{-1}\log t &\rm{ if } &\alpha=1\\ t^{1-2/\alpha} &\rm{ if }&
\alpha>1\end{array}\right.\leq c.$$ To estimate $II$, we use Plancherel and we split low and high frequencies,
\begin{align*}II &= c\int_{t/2}^t\Big(\intr e^{-2(t-s)|\xi|^\alpha}|\xi|^{2(j+1)}|\widehat{u^2}(s,\xi)|^2d\xi\Big)^{1/2}ds\\
&\leq c\int_{t/2}^t\Big(\int_{|\xi|<1}\ldots d\xi\Big)^{1/2}ds+c\int_{t/2}^t\Big(\int_{|\xi|>1}\ldots d\xi\Big)^{1/2}ds:=II_1+II_2.\end{align*}
If $|\xi|<1$, then $e^{-2|\xi|^\alpha}\geq e^{-2}$, hence
\begin{align*}II_1 &\leq c\int_{t/2}^t\Big(\intr e^{-2(1+t-s)|\xi|^\alpha}|\xi|^{2(j+1)}|\widehat{u^2}(s,\xi)|^2d\xi\Big)^{1/2}
\\ &= c\int_{t/2}^t\|\partial_x S_\alpha(1+t-s)\ast\partial_x^ju^2(s)\|_{L^2}ds\\ &\leq c\int_{t/2}^t\|\partial_xS_\alpha(1+t-s)\|_{L^2}\|\partial_x^ju^2(s)\|_{L^1}ds\\
&\leq c\int_{t/2}^t(1+t-s)^{-3/2\alpha}\sum_{k=0}^j\|\partial_x^ku(s)\|_{L^2}\|\partial_x^{j-k}u(s)\|_{L^2}ds.
\end{align*}
Corollary \ref{estuHj} with $k<j$ implies that
\begin{align}\notag \sum_{k=0}^j
\|\partial_x^ku(s)\|_{L^2}\|\partial_x^{j-k}u(s)\|_{L^2} &\leq
c\sum_{k=1}^{j-1}(1+s)^{-1/2\alpha-k/\alpha}(1+s)^{-1/2\alpha-(j-k)/\alpha}\\\notag
&\quad+c\|u(s)\|_{L^2}\|\partial_x^ju(s)\|_{L^2}\\ \label{eq2t}
&\leq
c(1+s)^{-1/\alpha-j/\alpha}+(1+s)^{-1/2\alpha}\|\partial_x^ju(s)\|_{L^2}.\end{align}
For the contribution of the first term in (\ref{eq2t}), we have
$$\int_{t/2}^t(1+t-s)^{-3/2\alpha}(1+s)^{-1/\alpha-j/\alpha}ds\leq ct^{-1/2\alpha-j/\alpha}\Big(t^{-1/2\alpha}\int_0^t(1+s)^{-3/2\alpha}ds\Big)$$ and
for $t>1$,
\begin{equation}\label{eps}t^{-1/2\alpha}\int_0^t(1+s)^{-3/2\alpha}ds\leq
c\left\{\begin{array}{lll}t^{-1/2\alpha}&\rm{if}& \alpha<3/2\\
t^{-1/3}\log t &\rm{if} & \alpha=3/2\\ t^{1-2/\alpha}&\rm{if} &
\alpha>3/2\end{array}\right. \leq c.\end{equation} For the second
one, one can write
\begin{multline*}\int_{t/2}^t(1+t-s)^{-3/2\alpha}(1+s)^{-1/2\alpha}\|\partial_x^ju(s)\|_{L^2}ds
\\\leq c\Big(t^{-1/2\alpha}\int_0^t(1+s)^{-3/2\alpha}ds\Big)\sup_{t/2\leq s\leq t}\|\partial_x^ju(s)\|_{L^2} \leq ct^{-\gamma}\sup_{t/2\leq s\leq t}\|u(s)\|_{\dot{H}^j}
\end{multline*}
 in view of (\ref{eps}). Term  $II_2$ is bounded by
\begin{align*}II_2 &\leq \int_{t/2}^t\Big(\intr e^{-2(t-s)}|\xi|^{2(j+1)}|\widehat{u^2}(s,\xi)|^2d\xi\Big)^{1/2}ds\\
&=c\int_{t/2}^t e^{-(t-s)}\|\partial_x^{j+1}u^2(s)\|_{L^2}ds\\ &\leq c \int_{t/2}^te^{-(t-s)}\sum_{k=0}^{j+1}\|\partial_x^ku(s)\partial_x^{j+1-k}u(s)\|_{L^2}ds.
\end{align*}
By symmetry, it suffices in the previous sum to consider the
values $k=0,1,\ldots,E((j+1)/2)$.  When $k=0$, assumption
(\ref{as3}) and Lemma \ref{elelem} provide
\begin{align*}\|u(s)\partial_x^{j+1}u(s)\|_{L^2} &\leq\|u(s)\|_{L^\infty}\|\partial_x^{j+1}u(s)\|_{L^2}\\
&\leq c\|u(s)\|_{L^2}^{1/2}\|u_x(s)\|_{L^2}^{1/2} \|\partial_x^ju(s)\|_{L^2}^{1-1/N}\|\partial_x^{j+N}u(s)\|_{L^2}^{1/N}\\
&\leq c(1+s)^{-1/4\alpha}\|\partial_x^ju(s)\|_{L^2}^{1-1/N}
\end{align*}
for any $N\geq 1$. For $k=1$, we have by similar calculations
\begin{align*}\|u_x(s)\partial_x^ju(s)\|_{L^2} &\leq \|u_x(s)\|_{L^\infty}\|\partial_x^ju(s)\|_{L^2}\\
&\leq c\|u(s)\|_{L^2}^{1/4}\|u_{xx}(s)\|_{L^2}^{3/4}\|\partial_x^ju(s)\|_{L^2}\\
&\leq c(1+s)^{-1/8\alpha}\|\partial_x^ju(s)\|_{L^2}.
\end{align*}
Note that if $k=2$, we must have $j\geq 3$. If $j\geq 4$, one has
by the inductive hypothesis
\begin{align*}\|\partial_x^2u(s)\partial_x^{j-1}u(s)\|_{L^2}
&\leq c\|\partial_x^2u(s)\|_{L^\infty}\|\partial_x^{j-1}u(s)\|_{L^2}\\& \leq c\|\partial_x^2u(s)\|_{L^2}^{1/2}\|\partial_x^3u(s)\|_{L^2}^{1/2}(1+s)^{-1/2\alpha-(j-1)/\alpha}
\\ &\leq c(1+s)^{-1/2\alpha-j/\alpha}.
\end{align*}
If $j=3$, then
\begin{align*}\|u_{xx}(s)u_{xx}(s)\|_{L^2} &\leq
\|u_{xx}(s)\|_{L^\infty}\|u_{xx}(s)\|_{L^2}\\ &\leq c\|u_{xx}(s)\|_{L^2}^{1/2}\|u_{xxx}(s)\|_{L^2}^{1/2}\|u_x(s)\|_{L^2}^{1/2}\|u_{xxx}(s)\|_{L^2}^{1/2}\\
&\leq c(1+s)^{-2/\alpha}\|\partial_x^ju(s)\|_{L^2}.
\end{align*}
 In the end
for $k\geq 3$ (and thus $j\geq 5$),
\begin{align*}\|\partial_x^ku(s)\partial_x^{j+1-k}u(s)\|_{L^2} &\leq \|\partial_x^ku(s)\|_{L^2}\|\partial_x^{j+1-k}u(s)\|_{L^\infty}\\
&\leq \|\partial_x^ku(s)\|_{L^2}\|\partial_x^{j+1-k}u(s)\|_{L^2}^{1/2}\|\partial_x^{j+2-k}u(s)\|_{L^2}^{1/2}\\
&\leq c(1+s)^{-1/2\alpha-k/\alpha+(-1/2\alpha-(j+1-k)/\alpha)/2+(-1/2\alpha-(j+2-k)/\alpha)/2}\\
&\leq c(1+s)^{-5/2\alpha-j/\alpha}\leq c(1+s)^{-1/2\alpha-j/\alpha}
\end{align*}
This allows us to conclude that \begin{align*}II_2 &\leq
c\int_{t/2}^te^{-(t-s)}[(1+s)^{-1/2\alpha-j/\alpha}+s^{-\gamma}\|\partial_x^ju(s)\|_{L^2}+(1+s)^{-\gamma}\|\partial_x^ju(s)\|_{L^2}^{1-1/N}]ds\\
&\leq
c[t^{-1/2\alpha-j/\alpha}+t^{-\gamma}\sup_{t/2\leq s\leq t}\|\partial_x^ju(s)\|_{L^2}+t^{-\gamma}\sup_{t/2\leq s\leq t}\|\partial_x^ju(s)\|_{L^2}^{1-1/N}]\int_0^te^{-(t-s)}ds\\
&\leq ct^{-1/2\alpha-j/\alpha}+t^{-\gamma}\sup_{t/2\leq s\leq
t}\|\partial_x^ju(s)\|_{L^2}+ct^{-\gamma}\sup_{t/2\leq s\leq
t}\|\partial_x^ju(s)\|_{L^2}^{1-1/N}.
\end{align*}
\end{proof}

In order to prove Corollary \ref{estuHj}, we need the following elementary result.
\begin{lemma}\label{lemel}Let $f:\R^+\rightarrow\R^+$ bounded, and $0<\gamma<\beta$ and $N\geq 1$.
We assume $$\forall t\geq 1,\quad f(t)\leq
ct^{-\beta}+ct^{-\gamma}\sup_{s\sim t}f(s)^{1-1/N}.$$ Then for $t$
and $N$ large enough, $f(t)\leq ct^{-\beta}$.
\end{lemma}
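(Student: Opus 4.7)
The plan is a standard bootstrap on the decay exponent of $f$. Since $f$ is bounded, the trivial estimate $f(t)\leq M$ corresponds to decay exponent $\delta_0=0$; I will feed this back into the hypothesis to iteratively improve the exponent by defining
\[
\delta_0=0,\qquad \delta_{k+1}=\min\bigl(\beta,\ \gamma+(1-1/N)\delta_k\bigr).
\]
The intermediate claim is that there exist constants $C_k$ and thresholds $T_k$ such that $f(t)\leq C_k t^{-\delta_k}$ for every $t\geq T_k$, which I would establish by induction on $k$. The inductive step is straightforward: if the previous bound holds for $s\geq T_k$ and we take $t\geq 2T_k$, then for every $s\sim t$ we have $s\geq T_k$, hence
\[
f(s)^{1-1/N}\leq C_k^{1-1/N} s^{-\delta_k(1-1/N)} \leq C_k' t^{-\delta_k(1-1/N)},
\]
and the hypothesis of the lemma gives $f(t)\leq c t^{-\beta}+c C_k' t^{-\gamma-\delta_k(1-1/N)}\leq C_{k+1}t^{-\delta_{k+1}}$ for $t\geq T_{k+1}$.

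Next, one analyses the untruncated recursion $u_{k+1}=\gamma+(1-1/N)u_k$ with $u_0=0$, whose closed form is $u_k=N\gamma\bigl(1-(1-1/N)^k\bigr)$ and whose attractor is $u^*=N\gamma$. Hence, as soon as $N>\beta/\gamma$, one has $u^*>\beta$ and there is a least integer $K$ with $\gamma+(1-1/N)\delta_{K-1}\geq\beta$. At that step the $\min$ clips the sequence at $\delta_K=\beta$, and the inductive bound yields $f(t)\leq C_K t^{-\beta}$ for all $t\geq T_K$, which is exactly the conclusion of the lemma.

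There is no real analytic obstacle: the only bookkeeping to carry out is the accumulation of the constants $C_k$ and the thresholds $T_k$ across the finitely many iterations, which is precisely the reason why the lemma asserts the bound only for $t$ and $N$ sufficiently large (one must choose $N>\beta/\gamma$ to close the iteration, and then $t\geq T_K$ to absorb the inherited constants).
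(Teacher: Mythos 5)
Your proposal is correct and is essentially the paper's own argument: the paper also bootstraps by induction on the exponent, proving $f(t)\leq ct^{-\min(\beta,\,\gamma N(1-(1-1/N)^n))}$, which is exactly your sequence $\delta_n$ in closed form, and then chooses $N$ large enough that the attractor $\gamma N$ exceeds $\beta$ so the minimum is eventually clipped at $\beta$. Your version is in fact slightly more careful than the paper's (which compresses the induction and the bookkeeping of constants into one line), but there is no difference in substance.
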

\begin{proof}We show by induction that for all $n\geq 0$, $f(t)\leq ct^{-\min(\beta,\gamma(1-N)(1-\frac 1N)^n+\gamma N)}$.
Thus for $n$ large enough, one obtains $f(t)\leq ct^{-\min(\beta,
\gamma N+1)}$ and it suffices to choose $N$ so that $\beta\leq
\gamma N+1$.
\end{proof}
\begin{proof}[Proof of Corollary \ref{estuHj}] By (\ref{as3}), we only need to consider $t$ large enough. Using (\ref{estSLp}) and Lemma \ref{lemti}, it follows that
\begin{align*}
\|\partial_x^ju(t)\|_{L^2} &\leq \|\partial_x^jS_\alpha(t)\ast u_0\|_{L^2}+\Big\|\frac 12\int_0^t\partial_x^jS_\alpha(t-s)\ast
\partial_xu^2(s)ds\Big\|_{L^2}\\ &\leq ct^{-1/2\alpha-j/\alpha}+ct^{-\gamma}\sup_{t/2\leq s\leq t}\|\partial_x^ju(s)\|_{L^2}+ct^{-\gamma}
\sup_{t/2\leq s\leq t}\|\partial_x^ju(s)\|_{L^2}^{1-1/N}.
\end{align*}
Letting $t\rightarrow \infty$, we deduce
$\|\partial_x^ju(t)\|_{L^2}\rightarrow 0$. For $t\gg 1$, we thus
have $\|\partial_x^ju(t)\|_{L^2}\leq 1$ and
$$\|\partial_x^ju(t)\|_{L^2}\leq
ct^{-1/2\alpha-j/\alpha}+ct^{-\gamma}\sup_{t/2\leq s\leq
t}\|\partial_x^ju(s)\|_{L^2}^{1-1/N}.$$ Applying Lemma \ref{lemel}
with $f(t)=\|\partial_x^ju(t)\|_{L^2}$ and
$\beta=1/2\alpha+j/\alpha$, we obtain the desired result.
\end{proof}

\vskip 0.5cm

\begin{proof}[Proof of Theorem \ref{th-decayrate}] The result is already proved in the case $p=2$. When $p=\infty$,
we use (\ref{ele1}) and Corollary \ref{estuHj} to get
$$\|u(t)\|_{\dot{H}^{\infty,j}}\leq c\|u(t)\|_{\dot{H}^j}^{1/2}\|u(t)\|_{\dot{H}^{j+1}}^{1/2}\leq c(1+t)^{-1/\alpha-j/\alpha}.$$
The other cases follow by an interpolation argument.
\end{proof}

\section{Asymptotic expansion}\label{sec-higher}
\subsection{First order}

In this subsection we prove Theorem \ref{th-ordre1}. As previously, it suffices to show the result when $p=2$ and $u_0\in H^{j+2}(\R)\cap L^1(\R)$.

First, since $u\in C_b(\R^+,H^j(\R))$,
$$\|u(t)-S_\alpha(t)\ast u_0\|_{\dot{H}^j}\leq \|u(t)\|_{\dot{H}^j}+\|G_\alpha(t)\|_{L^1}\|u_0\|_{\dot{H}^j}\leq c$$
and we reduce to consider the case $t\geq 1$. Using the integral formulation of (\ref{eq}), we have
\begin{align*}\|u(t)-S_\alpha(t)\ast u_0\|_{\dot{H}^j} &\leq \frac 12\int_0^t\|\partial_x^j S_\alpha(t-s)\ast\partial_x u^2\|_{L^2}ds \\
 &= \int_0^{t/2}\ldots ds+\int_{t/2}^t\ldots ds :=I+II.
\end{align*}
Term $I$ is bounded by
\begin{align*}I &\leq c\int_0^{t/2}\|\partial_x^{j+1}S_\alpha(t-s)\|_{L^2}\|u(s)\|_{L^2}^2ds\\
&\leq c\int_0^{t/2}(t-s)^{-1/2\alpha-(j+1)/\alpha}(1+s)^{-1/\alpha}ds\\ &\leq ct^{-1/2\alpha-(j+1)/\alpha}\int_0^t(1+s)^{-1/\alpha}ds\\
&\leq c \left\{\begin{array}{lll}t^{(-1/2\alpha-j/\alpha)-1/\alpha} &\rm{if} & \alpha<1,\\ t^{(-1/2-j)-1}\log(t)
 &\rm{if} & \alpha=1,\\ t^{(-1/2\alpha-j/\alpha)-(2/\alpha-1)} &\rm{if} & \alpha>1.\end{array}\right.
\end{align*}
To estimate $II$ we use Plancherel and we split low and high frequencies,
\begin{align*}II &= c\int_{t/2}^t\Big(\intr e^{-2(t-s)|\xi|^\alpha}|\xi|^{2(j+1)}|\widehat{u^2}(s,\xi)|^2d\xi\Big)^{1/2}ds\\
 &\leq c\int_{t/2}^t\Big(\int_{|\xi|<1}\ldots d\xi\Big)^{1/2}ds+c\int_{t/2}^t\Big(\int_{|\xi|>1}\ldots d\xi\Big)^{1/2}ds:=II_1+II_2.\end{align*}
$II_1$ is treated as follows
\begin{align*}II_1 &\leq c\int_{t/2}^t\|S_\alpha(1+t-s)\|_{L^2}\|\partial_x^{j+1}u^2(s)\|_{L^1}ds\\
&\leq c\int_{t/2}^t(1+t-s)^{-1/2\alpha} (1+s)^{-2/\alpha-j/\alpha}ds\\ &\leq ct^{-2/\alpha-j/\alpha}\int_0^t(1+s)^{-1/2\alpha}ds\\
&\leq c\left\{\begin{array}{lll}t^{-2/\alpha-j/\alpha} &\rm{if} & \alpha<1/2,\\ t^{-4-2j}\log t
&\rm{if} & \alpha=1/2,\\ t^{(-1/2\alpha-j/\alpha)+1-2/\alpha} &\rm{if} & \alpha>1/2,\end{array}\right.
\\ &\leq c\left\{\begin{array}{lll}t^{(-1/2\alpha-j/\alpha)-1/\alpha} &\rm{if} & \alpha<1,\\ t^{(-1/2-j)-1}\log(t)
&\rm{if} & \alpha=1,\\ t^{(-1/2\alpha-j/\alpha)-(2/\alpha-1)} &\rm{if} & \alpha>1.\end{array}\right.
\end{align*}
For the last term, we have
\begin{align*}II_2 &\leq c\int_{t/2}^te^{-(t-s)}\|\partial_x^{j+1}u^2(s)\|_{L^2}ds\\
&\leq c\int_{t/2}^t e^{-(t-s)}(1+s)^{-1/2\alpha-j/\alpha-2/\alpha}\\ &\leq ct^{(-1/2\alpha-j/\alpha)-2/\alpha},
\end{align*}
which is acceptable.

\subsection{Higher orders}
Here we find higher orders terms in the asymptotic expansion of
the solution to (\ref{eq}), i.e. we give a demonstration of
Theorems \ref{th-ordre2.1} and \ref{th-ordre2.2}.

\subsubsection{The case $0<\alpha<1$} First consider the case $0<\alpha<1$, our proof
follows Karch's one \cite{MR1727212} (see also \cite{MR1708995}).
\begin{proof}[Proof of Theorem \ref{th-ordre2.1} (i)]
By interpolation, we only need to consider the case $p=2$ and $u_0\in H^{j+2}(\R)$. Split the quantity
\begin{align*}&\Big\|u(t)-S_\alpha(t)\ast u_0+\frac 12\Big(\int_0^\infty\intr u^2(s,y)dyds\Big)\partial_xG_\alpha(t)\Big\|_{\dot{H}^j}\\
&\leq \frac 12\Big\|\int_0^t\partial_x[S_\alpha(t-s)-G_\alpha(t-s)]\ast u^2(s)ds\Big\|_{\dot{H}^j}\\
&\quad+\frac 12\Big\|\int_0^t\partial_xG_\alpha(t-s)\ast u^2(s)ds-\Big(\int_0^\infty\intr u^2(s,y)dyds\Big)\partial_xG_\alpha(t)\Big\|_{\dot{H}^j}\\ &:=I+II.
\end{align*}
To estimate $I$, we write
\begin{align*}I &\leq c\int_0^t\|\partial_x^{j+1}[S_\alpha(t-s)-G_\alpha(t-s)]\ast u^2(s)\|_{L^2}ds\\ & = \int_0^{t/2}\ldots ds+\int_{t/2}^t\ldots ds :=I_1+I_2.
\end{align*}
Concerning $I_1$, we use (\ref{estSG}) with $N=0$,
\begin{align*}I_1 &\leq c\int_0^{t/2}\|\partial_x^{j+1}[S_\alpha(t-s)-G_\alpha(t-s)]\|_{L^2}\|u(s)\|_{L^2}^2ds\\
&\leq c\int_0^{t/2}(t-s)^{-1/2\alpha-(j+1)/\alpha+1-3/\alpha}(1+s)^{-1/\alpha}ds\\ &\leq ct^{-1/2\alpha-j/\alpha-1/\alpha}t^{1-3/\alpha},
\end{align*}
which shows that $t^{1/2\alpha+j/\alpha+1/\alpha}I_1\rightarrow 0$.
To deal with the integrand over $[t/2,t]$, we note that $\|[S_\alpha(t-s)-G_\alpha(t-s)]\ast u^2(s)\|_{\dot{H}^{j+1}}\leq c\|u^2(s)\|_{\dot{H}^{j+1}}$, hence
\begin{align*}I_2 &\leq c\int_{t/2}^t\|\partial_x^{j+1}u^2(s)\|_{L^2}ds\\ &\leq c\int_{t/2}^t(1+s)^{-1/2\alpha-j/\alpha-2/\alpha}ds\\
&\leq ct^{-1/2\alpha-j/\alpha-1/\alpha}t^{1-1/\alpha},
\end{align*}
which is acceptable.
Now we estimate term $II$ by
\begin{align*}II &\leq \frac 12\Big\|\Big(\int_{t}^\infty\intr u^2(s,y)dyds\Big)\partial_xG_\alpha(t)\Big\|_{\dot{H}^j}\\
&\quad+ \frac 12\Big\|\int_0^{t}\Big[\partial_xG_\alpha(t-s)\ast u^2(s)-\Big(\intr u^2(s,y)dy\Big)\partial_xG_\alpha(t)\Big]ds\Big\|_{\dot{H}^j}\\ &:= II_1+II_2.
\end{align*}
Obviously,
$$ II_1 \leq c\int_{t}^\infty\|u(s)\|_{L^2}^2ds\|\partial_x^{j+1}G_\alpha(t)\|_{L^2}\leq ct^{(-1/2\alpha-j/\alpha)-1/\alpha} \int_{t}^\infty(1+s)^{-1/\alpha}ds
$$
and it is clear that $\int_{t}^\infty(1+s)^{-1/\alpha}ds\rightarrow 0$ as $t\rightarrow\infty$. To estimate $II_2$
one fixes $\delta>0$ and we bound it by
\begin{align*}II_2 &\leq c\Big\|\int_0^{t}\Big(\intr\partial_x[G_\alpha(t-s,\cdot-y)-G_\alpha(t,\cdot)]u^2(s,y)dy\Big)ds\Big\|_{\dot{H}^j}\\
&\leq c\int_0^t\Big\|\intr\partial_x^{j+1}[G_\alpha(t-s,\cdot-y)-G_\alpha(t,\cdot)]u^2(s,y)dy\Big\|_{L^2}ds\\
&= \int_0^{\delta t}\ldots ds+\int_{\delta t}^t\ldots ds\\ &= II_{21}+II_{22}.
\end{align*}
Then we split $II_{21}$ in two parts,
\begin{align*}II_{21} &\leq c\int_{[0,\delta t]\times\R} \|\partial_x^{j+1}[G_\alpha(t-s,\cdot-y)-G_\alpha(t,\cdot)]u^2(s,y)\|_{L^2}dsdy\\
&= c\int_{\Omega_1}\ldots dsdy+c\int_{\Omega_2}\ldots dyds\\ &=II_{211}+II_{212},
\end{align*}
where
\begin{align*}\Omega_1 &= [0,\delta t]\times [-\delta t^{1/\alpha},+\delta t^{1/\alpha}],\\
\Omega_2 &= [0,\delta t]\times(]-\infty,-\delta t^{1/\alpha}[\cup ]+\delta t^{1/\alpha},\infty[).
\end{align*}
For all $(s,y)\in \Omega_1$, a straightforward calculation provides
\begin{multline*}\|\partial_x^{j+1}[G_\alpha(t-s,\cdot-y)-G_\alpha(t,\cdot)]\|_{L^2}
\\=t^{-1/2\alpha-j/\alpha-1/\alpha}\|\partial_x^{j+1}[G_\alpha(1-s/t,\cdot-yt^{-1/\alpha})-G_\alpha(1,\cdot)]\|_{L^2}.
\end{multline*}
Hence, using the continuity of the translation on $L^2$, for all $\eps>0$, we can find a $\delta>0$ such that
\begin{multline*} t^{(1/2\alpha+j/\alpha)+1/\alpha}\sup_{(s,y)\in\Omega_1}\|\partial_x^{j+1}[G_\alpha(t-s,\cdot-y)-G_\alpha(t,\cdot)]\|_{L^2}\\
 \leq \sup_{\substack{0\leq \tau\leq \delta\\ |z|\leq \delta}}\|\partial_x^{j+1}[G_\alpha(1-\tau,\cdot-z)-G_\alpha(1,\cdot)]\|_{L^2}\leq \eps.
\end{multline*}
We deduce
$$t^{(1/2\alpha+j/\alpha)+1/\alpha}II_{211}\leq c\eps\int_0^{\delta t}\|u(s)\|_{L^2}^2ds\leq c\eps\int_0^{\delta t}(1+s)^{-1/\alpha}ds\leq c\eps.$$
Now for any $(s,y)\in\Omega_2$, we have
\begin{align*}\|\partial_x^{j+1}[G_\alpha(t-s,\cdot-y)-G_\alpha(t,\cdot)]\|_{L^2}
&\leq \|\partial_x^{j+1}G_\alpha(t-s)\|_{L^2}+\|\partial_x^{j+1}G_\alpha(t)\|_{L^2}\\ &\leq ct^{-1/2\alpha-(j+1)/\alpha},
\end{align*} which yields
$$t^{(1/2\alpha+j/\alpha)+1/\alpha}II_{212}\leq c\int_0^\infty\int_{|y|\geq \delta t^{1/\alpha}}u^2(s,y)dyds\rightarrow 0$$
by the dominated convergence theorem.\\ It remains to estimate $II_{22}$, we have
\begin{align*}II_{22} &= c\int_{\delta t}^t\|\partial_x^{j+1}G_\alpha(t-s)\ast u^2(s)-\|u(s)\|_{L^2}^2\partial_x^{j+1}G_\alpha(t)\|_{L^2}ds\\
&\leq c\int_{\delta t}^t\|\partial_x^{j+1}G_\alpha(t-s)\ast u^2(s)\|_{L^2}ds+c\int_{\delta t}^t(1+s)^{-1/\alpha}ds\|\partial_x^{j+1}G_\alpha(t)\|_{L^2}\\
&= II_{221}+II_{222}.
\end{align*}
The first term is bounded by
\begin{align*}II_{221} &\leq c\int_{\delta t}^t\Big(\intr |\xi|^{2(j+1)}e^{-2(t-s)|\xi|^\alpha}|\widehat{u^2}(s,\xi)|^2d\xi\Big)^{1/2}ds\\
&\leq c\int_{\delta t}^t[\|G_\alpha(1+t-s)\|_{L^2}\|\partial_x^{j+1}u^2(s)\|_{L^1}+e^{-(t-s)}\|\partial_x^{j+1}u^2(s)\|_{L^2}]ds\\
&\leq c\int_{\delta t}^t[(1+t-s)^{-1/2\alpha}(1+s)^{-2/\alpha-j/\alpha}+e^{-(t-s)}(1+s)^{-5/2\alpha-j/\alpha}]ds\\
&\leq ct^{-1/2\alpha-j/\alpha-1/\alpha}\Big(t^{-1/2\alpha}\int_0^t(1+s)^{-1/2\alpha}ds\Big)+ct^{-5/2\alpha-j/\alpha}
\end{align*}
and thus $t^{1/2\alpha+j/\alpha+1/\alpha}II_{221}\rightarrow 0$. On the other hand, we have immediately
$$II_{222}\leq ct^{-1/2\alpha-j/\alpha-1/\alpha}t^{1-1/\alpha},$$ which achieves the proof of (\ref{asympa1}).
\end{proof}

\subsubsection{The case $\alpha=1$} The proof of (\ref{asympa2}) uses the same
arguments together with the following result.
\begin{lemma}\label{limu1} Under the assumptions of Theorem \ref{th-ordre2.1} (ii),
$$\lim_{t\rightarrow \infty}\frac{1}{\log t}\int_0^t\intr u^2(s,y)dyds=\frac{M^2}{2\pi}.$$
\end{lemma}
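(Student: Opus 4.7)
The plan is to replace $u(s)$ by its leading asymptotic profile $MG_1(s)$ inside $\|u(s)\|_{L^2}^2$, and then compute the main contribution $\int\|G_1(s)\|_{L^2}^2\,ds$ explicitly via Plancherel. First, combining Theorem~\ref{th-ordre1}(ii) with $(p,j)=(2,0)$, $\alpha=1$, and Theorem~\ref{complin} with $N=0$, $(p,j)=(2,0)$, I would obtain, for the remainder $r(s):=u(s)-MG_1(s)$, the estimate
$$\|r(s)\|_{L^2}\leq \|u(s)-S_1(s)\ast u_0\|_{L^2}+\|S_1(s)\ast u_0-MG_1(s)\|_{L^2}\leq c(1+s)^{-3/2}\log(2+s).$$

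Next I would expand
$$\|u(s)\|_{L^2}^2=M^2\|G_1(s)\|_{L^2}^2+2M\langle G_1(s),r(s)\rangle+\|r(s)\|_{L^2}^2.$$
Since $\widehat{G_1}(s,\xi)=e^{-s|\xi|}$, Plancherel gives $\|G_1(s)\|_{L^2}^2=\frac{1}{2\pi s}$ for $s>0$, so $\int_1^t M^2\|G_1(s)\|_{L^2}^2\,ds=\frac{M^2}{2\pi}\log t$. The cross term is controlled by Cauchy--Schwarz together with $\|G_1(s)\|_{L^2}\leq cs^{-1/2}$, yielding $|\langle G_1(s),r(s)\rangle|\leq cs^{-2}\log s$; and $\|r(s)\|_{L^2}^2\leq cs^{-3}(\log s)^2$. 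Both are integrable on $[1,\infty)$. The contribution of $[0,1]$ is handled trivially by the energy estimate $\|u(s)\|_{L^2}\leq\|u_0\|_{L^2}$ from Corollary~\ref{cor-l2}. Summing these contributions gives
$$\int_0^t\|u(s)\|_{L^2}^2\,ds=\frac{M^2}{2\pi}\log t+O(1),$$
and division by $\log t$ yields the claim.

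The main obstacle is purely one of bookkeeping: because the leading term diverges only logarithmically, every error contribution, whether coming from $u-S_1\ast u_0$ or from the expansion of $S_1\ast u_0$ around $MG_1$, must square to something integrable in $s$. The sharp decay rate $s^{-3/2}\log s$ produced at $\alpha=1$ in Theorem~\ref{th-ordre1} is precisely what makes both $\|r\|_{L^2}^2$ and $\langle G_1,r\rangle$ integrable in time, so these terms contribute only $O(1)$ and are absorbed into the $o(\log t)$ error.
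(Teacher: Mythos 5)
Your proposal is correct and follows essentially the same route as the paper: both approximate $u(s)$ by $MG_1(s)$ using Theorem \ref{th-ordre1} together with the $N=0$ linear expansion (\ref{estSGlin}) to get an $O(s^{-3/2}\log s)$ remainder, show the resulting error in $\|u(s)\|_{L^2}^2$ is $O(s^{-2}\log s)$ and hence integrable, and compute the divergent main term $\int_1^t M^2\|G_1(s)\|_{L^2}^2\,ds=\frac{M^2}{2\pi}\log t$ via Plancherel (the paper passes through the self-similarity (\ref{autosi}) first, but the computation is identical). Your expansion of the square versus the paper's factorization $u^2-(MG_1)^2=(u+MG_1)(u-MG_1)$ is only a cosmetic difference.
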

\begin{proof}
First note that $$\frac{1}{\log t}\int_0^1\intr u^2(s,y)dyds\leq \frac{c}{\log t}\int_0^1(1+s)^{-1}ds\leq
\frac{c}{\log t}\rightarrow 0$$ and it remains to calculate the limit as $t\rightarrow\infty$ of
\begin{align}\label{limeval}\notag\frac{1}{\log t}\int_1^t\intr u^2(s,y)dyds &= \frac{1}{\log t}\int_1^t\intr (u^2(s,y)-(MG_1(s,y))^2)dyds\\
&\quad+\frac{1}{\log t}\int_1^t\intr(MG_1(s,y))^2dyds.
\end{align}
Using Theorem \ref{th-ordre1} as well as estimate (\ref{estSGlin}), we get for all $s> 1$
\begin{align*}\intr|u^2(s,y)-(MG_1(s,y))^2|dy &\leq \|u(s)+MG_1(s)\|_{L^2}\|u(s)-MG_1(s)\|_{L^2}\\ &\leq cs^{-1/2}\big(\|u(s)-S_1(s)\ast u_0\|_{L^2}\\
&\quad +\|S_1(s)\ast u_0-MG_1(s)\|_{L^2}\big)\\ &\leq cs^{-1/2}(s^{-3/2}\log s+s^{-3/2})\\ &\leq cs^{-2}\log s.
\end{align*}
It follows that
$$\frac{1}{\log t}\int_1^t\intr |u^2(s,y)-(MG_1(s,y))^2|dyds\leq \frac{c}{\log t}\int_1^ts^{-2}\log sds\rightarrow 0$$ by dominated convergence.
The last term in (\ref{limeval}) is equal to
\begin{align*}\frac{1}{\log t}\int_1^t\intr (MG_1(s,y))^2dyds &= \frac{M^2}{\log t}\int_1^t\intr s^{-2}(G_1(1,y/s))^2dyds\\
&= \frac{M^2}{\log t}\int_1^t\frac{ds}{s}\intr (G_1(1,x))^2dx\\ &= M^2\|G_1(1)\|_{L^2}^2\\ &=\frac{M^2}{2\pi}.
\end{align*}
\end{proof}

\begin{proof}[Proof of Theorem \ref{th-ordre2.1} (ii)] It is sufficient to show that
$$\frac{t^{3/2+j}}{\log t}\Big\|\int_0^t\partial_xS_1(t-s)\ast u^2(s)ds-\frac{M^2}{2\pi}(\log t)\partial_xG_1(t)\Big\|_{\dot{H}^j}\rightarrow 0.$$
for all $j\geq 0$.
As in Theorem \ref{th-ordre2.1} (i), we can replace $S_\alpha(t-s)$ by $G_\alpha(t-s)$ by writing
\begin{align*}&\Big\|\int_0^t\partial_xS_1(t-s)\ast u^2(s)ds-\frac{M^2}{2\pi}(\log t)\partial_xG_1(t)\Big\|_{\dot{H}^j}\\
&\leq \Big\|\int_0^t\partial_x[S_1(t-s)-G_1(-t-s)]\ast u^2(s)ds\Big\|_{\dot{H}^j}\\&\quad +
\Big\|\int_0^t\partial_xG_1(t-s)\ast u^2(s)ds-\frac{M^2}{2\pi}(\log t)\partial_xG_1(t)\Big\|_{\dot{H}^j}
\end{align*}
and using (\ref{estSG}). Last term in the previous inequality is bounded by
\begin{multline*}\leq \Big\|\int_0^t\partial_xG_1(t-s)\ast u^2(s)ds-\Big(\int_0^t\intr u^2(s,y)dyds\Big)\partial_xG_1(t)\Big\|_{\dot{H}^j}\\
+\Big\|\Big(\int_0^t\intr u^2(s,y)dyds\Big)\partial_xG_1(t)-\frac{M^2}{2\pi}(\log t)\partial_xG_1(t)\Big\|_{\dot{H}^j}.\end{multline*}
The first term is estimated exactly in the same way that $II_2$ in Theorem \ref{th-ordre2.1} (i) and for the second one, Lemma \ref{limu1} provides
\begin{align*}&\frac{t^{3/2+j}}{\log t}\Big\|\Big(\int_0^t\intr u^2(s,y)dyds\Big)\partial_xG_1(t)-\frac{M^2}{2\pi}
(\log t)\partial_xG_1(t)\Big\|_{\dot{H}^j}\\ &\leq t^{3/2+j}\Big|\frac{1}{\log t}\int_0^t\intr u^2(s,y)dyds-\frac{M^2}{2\pi}\Big|\|\partial_x^{j+1}G_1(t)\|_{L^2}\\
&\leq c\Big|\frac{1}{\log t}\int_0^t\intr u^2(s,y)dyds-\frac{M^2}{2\pi}\Big|\rightarrow 0.
\end{align*}
\end{proof}

\subsubsection{The case $1<\alpha<2$}
Finally we consider the case $1<\alpha<2$.
\begin{proof}[Proof of Theorem \ref{th-ordre2.2}] We prove the result when $p=2$ and $u_0\in H^{j+2}(\R)$.

\noindent {\bf Step 1.} $\|F^n(t)\|_{\dot{H}^j}$ decays like $\|u(t)\|_{\dot{H}^j}$.\\
If $n=0$, then for all $j\geq 0$,
$\|F^0(t)\|_{\dot{H}^j}=\|\partial_x^jS_\alpha(t)\ast
u_0\|_{L^2}\leq c(1+t)^{-1/2\alpha-j/\alpha}.$ Let $n\geq 0$ such
that for all $j\geq 0$, $\|F^n(t)\|_{\dot{H}^j}\leq
c(1+t)^{-1/2\alpha-j/\alpha}$ . Then, for any $t\leq 1$,
    \begin{align*}\|F^{n+1}(t)\|_{\dot{H}^j} &\leq \|S_\alpha(t)\ast u_0\|_{\dot{H}^j}+\int_0^t\|S_\alpha(t-s)\ast
    \partial_x(F^n(s))^2\|_{\dot{H}^j}ds\\ &\leq \|G_\alpha(t)\|_{L^1}\|u_0\|_{\dot{H}^j}+\int_0^1\|G_\alpha(t-s)\|_{L^1}\|\partial_x^{j+1}(F^n(s))\|_{L^2}ds\\ &\leq c.
    \end{align*}
    Now assume $t>1$. We have
    \begin{align*}\|F^{n+1}(t)\|_{\dot{H}^j} &\leq \|S_\alpha(t)\ast u_0\|_{\dot{H}^j}+\int_0^t\|S_\alpha(t-s)\ast
     \partial_x(F^n(s))^2\|_{\dot{H}^j}ds\\ &\leq c(1+t)^{-1/2\alpha-j/\alpha}+\int_0^{t/2}\ldots ds+\int_{t/2}^t\ldots ds.
    \end{align*}
    The integrand over $[0,t/2]$ is estimated as follows
    \begin{align*}\int_0^{t/2}\ldots ds &\leq \int_{t/2}^t\|\partial_x^{j+1}S_\alpha(t-s)\|_{L^2}\|F^n(s)\|_{L^2}^2ds\\
    &\leq c\int_0^{t/2}(t-s)^{-1/2\alpha-(j+1)/\alpha}(1+s)^{-1/\alpha}ds\\ &\leq ct^{-1/2\alpha-j/\alpha}\Big(t^{-1/\alpha}\int_0^t(1+s)^{-1/\alpha}ds\Big)\\
    &\leq ct^{-1/2\alpha-j/\alpha}.
    \end{align*}
    For the second one, one splits
    \begin{align*}\int_{t/2}^t\ldots ds &= c\int_{t/2}^t\Big(\intr |\xi|^{2(j+1)}e^{-2(t-s)|\xi|^\alpha}|\widehat{(F^n(s))^2}(\xi)|^2d\xi\Big)^{1/2}ds\\
    &\leq c\int_{t/2}^t\Big(\int_{|\xi|<1}\ldots d\xi\Big)^{1/2}ds+\int_{t/2}^t\Big(\int_{|\xi|>1}\ldots d\xi\Big)^{1/2}ds\\ &:= I+II.
    \end{align*}
    Term $I$ is bounded by
    \begin{align*}I &\leq c\int_{t/2}^t\|\partial_x^{j+1}S_\alpha(1+t-s)\ast F^n(s)\|_{L^2}ds\\
    &\leq c\int_{t/2}^t\|S_\alpha(1+t-s)\|_{L^2}\|\partial_x^{j+1}(F^n(s)^2\|_{L^1}ds\\
    &\leq c\int_{t/2}^t(1+t-s)^{-1/2\alpha}\sum_{k=0}^{j+1}\|\partial_x^kF^n(s)\|_{L^2}\|\partial_x^{j+1-k}F^n(s)\|_{L^2}ds\\
    &\leq c\int_{t/2}^t(1+t-s)^{-1/2\alpha}(1+s)^{-2/\alpha-j/\alpha}ds\\ &\leq ct^{-1/2\alpha-j/\alpha}\Big(t^{-3/2\alpha}\int_0^t(1+s)^{-1/2\alpha}ds\Big)\\
    &\leq ct^{-1/2\alpha-j/\alpha}
    \end{align*}
    and $II$ is estimated by
    \begin{align*}\int_{t/2}^te^{-(t-s)}\|\partial_x^{j+1}(F^n(s)^2\|_{L^2}ds
    &\leq c\int_{t/2}^te^{-(t-s)}\sum_{k=0}^{j+1}\|\partial_x^kF^n(s)\|_{L^2}\|\partial_x^{j+1-k}F^n(s)\|_{L^\infty}ds\\
    &\leq c\int_{t/2}^te^{-(t-s)}\sum_{k=0}^{j+1}\|\partial_x^kF^n(s)\|_{L^2}\|\partial_x^{j+1-k}F^n(s)\|_{L^2}^{1/2}\\
    &\quad\times \|\partial_x^{j+2-k}F^n(s)\|_{L^2}^{1/2}ds\\ &\leq c\int_{t/2}^te^{-(t-s)}(1+s)^{-5/2\alpha-j/\alpha}ds\\
    &\leq ct^{-5/2\alpha-j/\alpha} \leq ct^{-1/2\alpha-j/\alpha}.
    \end{align*}
    We have showed that $\|F^{n+1}(t)\|_{\dot{H}^j}\leq c(1+t)^{-1/2\alpha-j/\alpha}$ and by induction, this estimate becomes true for any $n\geq 0$.

 \noindent
{\bf Step 2.} We claim that if for all $j\geq 0$,
$\|u(t)-F^n(t)\|_{\dot{H}^j}\leq c(1+t)^{-r_j(n)}$ and
$r_j(n)=\frac j\alpha+r_0(n)$,
then $$\|u(t)-F^{n+1}(t)\|_{\dot{H}^j}\leq c\left\{\begin{array}{lll}(1+t)^{-1/2\alpha-j/\alpha-1/\alpha} &\rm{if} & 1-\frac{1}{2\alpha}-r_0(n)<0,\\
(1+t)^{-1/2\alpha-j/\alpha-1/\alpha}\log(1+t) &\rm{if}& 1-\frac{1}{2\alpha}-r_0(n)=0,\\ (1+t)^{-1/2\alpha-j/\alpha-1/\alpha+1-1/2\alpha-r_0(n)}
&\rm{if} &1-\frac{1}{2\alpha}-r_0(n)>0.\end{array}\right.$$
    Indeed, first for $t\leq 1$ it is clear that $\|u(t)-F^{n+1}(t)\|_{\dot{H}^j}$ is bounded. If $t>1$ we have by definition of $F^n$,
    \begin{align*}\|u(t)-F^{n+1}(t)\|_{\dot{H}^j} &\leq \frac 12\int_0^t\|\partial_x^{j+1}S_\alpha(t-s)\ast[u^2(s)-(F^n(s))^2]\|_{L^2}ds\\
    &= \int_0^{t/2}\ldots ds+\int_{t/2}^t\ldots ds :=III+IV.
    \end{align*}
    We bound the contribution of $III$ by
    \begin{align*}III &\leq c\int_0^{t/2}\|\partial_x^{j+1}S_\alpha(t-s)\|_{L^2}\|u^2(s)-(F^n(s))^2\|_{L^1}ds\\
    &\leq c\int_0^{t/2}(t-s)^{-1/2\alpha-(j+1)/\alpha}\|u(s)-F^n(s)\|_{L^2}(\|u(s)\|_{L^2}+\|F^n(s)\|_{L^2})ds\\
    &\leq c\int_0^{t/2}(t-s)^{-1/2\alpha-(j+1)/\alpha}(1+s)^{-1/2\alpha-r_0(n)}ds\\
    &\leq c\left\{\begin{array}{lll}(1+t)^{-1/2\alpha-j/\alpha-1/\alpha} &\rm{if} & 1-\frac{1}{2\alpha}-r_0(n)<0,\\
    (1+t)^{-1/2\alpha-j/\alpha-1/\alpha}\log(1+t) &\rm{if}& 1-\frac{1}{2\alpha}-r_0(n)=0,\\
    (1+t)^{-1/2\alpha-j/\alpha-1/\alpha+1-1/2\alpha-r_0(n)} &\rm{if} &1-\frac{1}{2\alpha}-r_0(n)>0.\end{array}\right.
    \end{align*}
    Then we decompose $IV$ as
    \begin{align*}IV &= c\int_{t/2}^t\Big(\intr |\xi|^{2(j+1)}e^{-2(t-s)|\xi|^\alpha}|\mathcal{F}[u^2(s)-(F^n(s))^2](\xi)|^2d\xi\Big)^{1/2}ds\\
    &\leq c\int_{t/2}^t\Big(\int_{|\xi|<1}\ldots d\xi\Big)^{1/2}ds+\int_{t/2}^t\Big(\int_{|\xi|>1}\ldots d\xi\Big)^{1/2}ds\\ &:= IV_1+IV_2.
    \end{align*}
    Low frequencies are treated as follows,
    \begin{align}\label{est4.1}\notag IV_1 &\leq \int_{t/2}^t\|\partial_x^{j+1}S_\alpha(1+t-s)\ast[u^2(s)-(F^n(s))^2]\|_{L^2}ds\\\notag
    &\leq c\int_{t/2}^t\|S_\alpha(1+t-s)\|_{L^2}\|\partial_x^{j+1}[u^2(s)-(F^n(s))^2]\|_{L^1}ds\\
    \notag &\leq c\int_{t/2}^t(1+t-s)^{-1/2\alpha}\sum_{k=0}^{j+1}\|\partial_x^k[u(s)-F^n(s)]\|_{L^2}(\|\partial_x^{j+1-k}u(s)\|_{L^2}+
    \|\partial_x^{j+1-k}F^n(s)\|_{L^2})ds\\\notag &\leq c\int_{t/2}^t(1+t-s)^{-1/2\alpha}\sum_{k=0}^{j+1}(1+s)^{-r_k(n)-1/2\alpha-(j+1-k)/\alpha}ds\\
    &\leq c\sum_{k=0}^{j+1}t^{-r_k(n)+k/\alpha-j/\alpha+1-2/\alpha}
    \end{align}
    and since $r_k(n)=\frac k\alpha+r_0(n)$, we infer $IV_1\leq ct^{-r_0(n)-j/\alpha+1-2/\alpha}.$ In the same way,
    \begin{align}\label{est4.2}\notag IV_2 &\leq c\int_{t/2}^te^{-(t-s)}\|\partial_x^{j+1}[u^2(s)-(F^n(s))^2]\|_{L^2}ds\\ \notag
     &\leq c\int_{t/2}^te^{-(t-s)}\sum_{k=0}^{j+1}(1+s)^{-r_k(n)-1/\alpha-(j+1-k)/\alpha}ds \\
     \notag &\leq c\sum_{k=0}^{j+1}t^{-r_k(n)+k/\alpha-j/\alpha-2/\alpha}\\  &\leq ct^{-r_0(n)-j/\alpha-2/\alpha}.
    \end{align}
    Combining (\ref{est4.1}) and (\ref{est4.2}), we deduce
    $$IV\leq  c\left\{\begin{array}{lll}(1+t)^{-1/2\alpha-j/\alpha-1/\alpha} &\rm{if} & 1-\frac{1}{2\alpha}-r_0(n)<0,\\
    (1+t)^{-1/2\alpha-j/\alpha-1/\alpha}\log(1+t) &\rm{if}& 1-\frac{1}{2\alpha}-r_0(n)=0,\\
    (1+t)^{-1/2\alpha-j/\alpha-1/\alpha+1-1/2\alpha-r_0(n)} &\rm{if} &1-\frac{1}{2\alpha}-r_0(n)>0.\end{array}\right.$$

\noindent
{\bf Step 3.} Construction of $r_j(n)$ and conclusion.\\
    We define the sequence $r_j(n)$ by iteration. Set $r_j(0)=\frac1{2\alpha}+\frac j\alpha+\frac 2\alpha-1$ for all $j\geq
    0$.
    We have $\|u(t)-F^0(t)\|_{\dot{H}^j}\leq c(1+t)^{-r_j(0)}$ by Theorem \ref{th-ordre1}. If $r_j(n)$ is constructed for all $j$,
    then we set
    \begin{equation}\label{defrjn}r_j(n+1)=\left\{\begin{array}{lll}\frac{1}{2\alpha}+\frac{j}{\alpha}+\frac{1}{\alpha}
    &\rm{if}& 1-\frac{1}{2\alpha}-r_0(n)\leq 0,\\ r_0(n)+\frac{j}{\alpha}+\frac{2}{\alpha}-1 &\rm{if} & 1-
    \frac{1}{2\alpha}-r_0(n)>0.\end{array}\right.\end{equation}
    We easily see that $r_j(n)=\frac j\alpha+r_0(n)$ for all $j$, thus Step 2 shows that for any $n\geq 0$ satisfying $1-\frac{1}{2\alpha}-r_0(n)\leq 0$,
    \begin{multline}\label{estuF}\|u(t)-F^{n+1}(t)\|_{\dot{H}^j}\\\leq c\left\{\begin{array}{lll}(1+t)^{-1/2\alpha-j/\alpha-1/\alpha}
    &\rm{if} & 1-\frac{1}{2\alpha}-r_0(n)<0,\\ (1+t)^{-1/2\alpha-j/\alpha-1/\alpha}\log(1+t) &\rm{if} & 1-\frac{1}{2\alpha}-r_0(n)=0.
    \end{array}\right.\end{multline}
    Let us prove that the sequence $n\mapsto r_j(n)$ is eventually constant. Suppose that $1-\frac{1}{2\alpha}-r_0(n)>0$ for all
    $n\geq 0$. Then by (\ref{defrjn}) we obtain $r_j(n+1)=r_0(n)+\frac j\alpha+\frac 2\alpha-1$ ($\forall n$). In particular $r_0(n+1)=r_0(n)
    +\frac 2\alpha-1$ and thus $r_0(n)=n(\frac 2\alpha-1)+r_0(0)=(n+1)(\frac 2\alpha-1)+\frac 1{2\alpha}$. Since $\frac 2\alpha-1>0$,
    this contradicts the assumption $r_0(n)<1-\frac 1{2\alpha}$ for $n$ large enough. Hence there exists $n\geq 0$ such that $1-\frac 1{2\alpha}-r_0(n)\leq 0$
    and we can set $$N=\min\big\{n\geq 0 : 1-\frac 1{2\alpha}-r_0(n)\leq 0\big\}.$$ For this value of $N$, it is not too difficult to see
    that $$r_j(n)=\left\{\begin{array}{lll}(n+1)(\frac 2\alpha-1)+\frac 1{2\alpha}+\frac j\alpha &\rm{if} & n\leq N,\\ \frac 1{2\alpha}+\frac
    j\alpha+\frac 1\alpha &\rm{if} & n>N.\end{array}\right.$$
    It follows that $N=\min\{n\geq 0 : 1-\frac 1\alpha-(n+1)(\frac 2\alpha-1)\leq 0\}=\min\{n\geq 0 : \alpha\leq \frac{2n+3}{n+2}\}$.
    From this and (\ref{estuF}) we infer
    $$\|u(t)-F^{N+1}(t)\|_{\dot{H}^j}\leq c\left\{\begin{array}{lll}(1+t)^{-1/2\alpha-j/\alpha-1/\alpha} &\rm{if} & \alpha<\frac{2N+3}{N+2},
    \\ (1+t)^{-1/2\alpha-j/\alpha-1/\alpha}\log(1+t) &\rm{if} & \alpha=\frac{2N+3}{N+2}
    .\end{array}\right.$$
\end{proof}

\section*{Acknowledgment}
The author thanks Francis Ribaud for several encouragements and helpful discussions.

\bibliographystyle{plain}
\bibliography{ref}

\begin{thebibliography}{10}

\bibitem{MR1012198}
C.~J. Amick, J.~L. Bona, and M.~E. Schonbek.
\newblock Decay of solutions of some nonlinear wave equations.
\newblock {\em J. Differential Equations}, 81(1):1--49, 1989.

\bibitem{MR531061}
C.~Bardos, P.~Penel, U.~Frisch, and P.-L. Sulem.
\newblock Modified dissipativity for a nonlinear evolution equation arising in
  turbulence.
\newblock {\em Arch. Rational Mech. Anal.}, 71(3):237--256, 1979.

\bibitem{MR1637513}
P.~Biler, T.~Funaki, and W.~A. Woyczynski.
\newblock Fractal {B}urgers equations.
\newblock {\em J. Differential Equations}, 148(1):9--46, 1998.

\bibitem{MR1708995}
P.~Biler, G.~Karch, and W.~A. Woyczynski.
\newblock Asymptotics for multifractal conservation laws.
\newblock {\em Studia Math.}, 135(3):231--252, 1999.

\bibitem{MR1274542}
A.~Carpio.
\newblock Asymptotic behavior for the vorticity equations in dimensions two and
  three.
\newblock {\em Comm. Partial Differential Equations}, 19(5-6):827--872, 1994.

\bibitem{MR1187625}
D.~B. Dix.
\newblock The dissipation of nonlinear dispersive waves: the case of
  asymptotically weak nonlinearity.
\newblock {\em Comm. Partial Differential Equations}, 17(9-10):1665--1693,
  1992.

\bibitem{MR2158256}
B.~Jourdain, S.~M{\'e}l{\'e}ard, and W.~A. Woyczynski.
\newblock Probabilistic approximation and inviscid limits for one-dimensional
  fractional conservation laws.
\newblock {\em Bernoulli}, 11(4):689--714, 2005.

\bibitem{MR1455428}
G.~Karch.
\newblock {$L\sp p$}-decay of solutions to dissipative-dispersive perturbations
  of conservation laws.
\newblock {\em Ann. Polon. Math.}, 67(1):65--86, 1997.

\bibitem{MR1727212}
G.~Karch.
\newblock Large-time behaviour of solutions to non-linear wave equations:
  higher-order asymptotics.
\newblock {\em Math. Methods Appl. Sci.}, 22(18):1671--1697, 1999.

\bibitem{MR1643236}
G.~Karch.
\newblock Self-similar large time behavior of solutions to {K}orteweg-de
  {V}ries-{B}urgers equation.
\newblock {\em Nonlinear Anal.}, 35(2, Ser. A: Theory Methods):199--219, 1999.

\bibitem{karch-2007}
G.~Karch, C.~Miao, and X.~Xu.
\newblock On convergence of solutions of fractal burgers equation toward
  rarefaction waves, 2007.

\bibitem{PhysRevLett.56.889}
M.~Kardar, G.~Parisi, and Y.-C. Zhang.
\newblock Dynamic scaling of growing interfaces.
\newblock {\em Phys. Rev. Lett.}, 56(9):889--892, Mar 1986.

\bibitem{miao-2006}
C.~Miao, B.~Yuan, and B.~Zhang.
\newblock Well-posedness of the cauchy problem for the fractional power
  dissipative equations, 2006.

\bibitem{MR0404890}
R.~M. Miura.
\newblock The {K}orteweg-de {V}ries equation: a survey of results.
\newblock {\em SIAM Rev.}, 18(3):412--459, 1976.

\bibitem{MR1889080}
L.~Molinet and F.~Ribaud.
\newblock The {C}auchy problem for dissipative {K}orteweg de {V}ries equations
  in {S}obolev spaces of negative order.
\newblock {\em Indiana Univ. Math. J.}, 50(4):1745--1776, 2001.

\bibitem{ott:1432}
E.~Ott and R.~N. Sudan.
\newblock Damping of solitary waves.
\newblock {\em Physics of Fluids}, 13(6):1432--1434, 1970.

\bibitem{MR571048}
M.~E. Schonbek.
\newblock Decay of solutions to parabolic conservation laws.
\newblock {\em Comm. Partial Differential Equations}, 5(5):449--473, 1980.

\bibitem{MR874343}
P.~C. Schuur.
\newblock {\em Asymptotic analysis of soliton problems}, volume 1232 of {\em
  Lecture Notes in Mathematics}.
\newblock Springer-Verlag, Berlin, 1986.
\newblock An inverse scattering approach.

\end{thebibliography}

\end{document}